\documentclass[11pt]{amsart}
\usepackage{amssymb, amsmath, amsfonts, amsthm, graphics}
\usepackage{hyperref}
\usepackage{mathtools}
\usepackage{tikz-cd}
\usepackage{stmaryrd}
\usepackage{mathrsfs}

\usepackage[hmargin=1.4 in, vmargin = 1.4 in, lmargin=1.3 in, rmargin=1.3 in]{geometry}
\usepackage{enumerate}
%\usepackage[backend=bibtex]{biblatex}
%\addbibresource{refse.bib}
%\vfuzz=2pt

\setcounter{tocdepth}{1}

% some "funny lines" referred to later:
\newtheorem{thm}{Theorem}[section]
\newtheorem{cor}[thm]{Corollary}
\newtheorem{lem}[thm]{Lemma}
\newtheorem{defn}[thm]{Definition}
\newtheorem{prop}[thm]{Proposition}
\newtheorem{ex}[thm]{Example}
\newtheorem{remark}[thm]{Remark}
%{ \theoremstyle{remark}\newtheorem*{remark}{Remark} }
%{ \theoremstyle{claim}\newtheorem*{remark}{Claim} }

%Some multiletter functions
\DeclareMathOperator{\Hom}{Hom}

\DeclareMathOperator{\Spec}{Spec}

\DeclareMathOperator{\Proj}{Proj}

%Their calligraphic versions; use these for the sheaf constructions

\DeclareMathOperator\codim{codim}

\DeclareMathOperator{\an}{an}

\DeclareMathOperator{\vol}{vol}
\DeclareMathOperator{\na}{NA}
\DeclareMathOperator{\trop}{trop}
\DeclareMathOperator{\inte}{Int}

%This makes alternating tensors look right in displayed equations

%Blackboard bold letters.
\renewcommand{\AA}{\mathbb{A}}

\newcommand{\CC}{\mathbb{C}}

\newcommand{\GG}{\mathbb{G}}

\newcommand{\KK}{\mathbb{K}}
\newcommand{\LL}{\mathbb{L}}

\newcommand{\NN}{\mathbb{N}}

\newcommand{\QQ}{\mathbb{Q}}
\newcommand{\RR}{\mathbb{R}}

\newcommand{\TT}{\mathbb{T}}

\newcommand{\ZZ}{\mathbb{Z}}

%Calligraphic letters

\newcommand{\cC}{\mathcal{C}}
\newcommand{\cD}{\mathcal{D}}

\newcommand{\cF}{\mathcal{F}}

\newcommand{\cH}{\mathcal{H}}
\newcommand{\cI}{\mathcal{I}}

\newcommand{\cL}{\mathcal{L}}

\newcommand{\cO}{\mathcal{O}}

\newcommand{\cR}{\mathcal{R}}

\newcommand{\cV}{\mathcal{V}}

\newcommand{\cX}{\mathcal{X}}

\newcommand{\fa}{\mathfrak{a}}

\newcommand{\fm}{\mathfrak{m}}

\newcommand{\la}{\langle}
\newcommand{\ra}{\rangle}
\newcommand{\lb}{\llbracket}
\newcommand{\rb}{\rrbracket}

\newcommand{\Addresses}{{% additional braces for segregating \footnotesize
  \bigskip
  \footnotesize

  \textsc{Dept of Mathematics, University of Michigan,
    Ann Arbor, Michigan 48109-1043, USA}\par\nopagebreak
  \textit{E-mail address}: \texttt{yueqiaow@umich.edu}

}}

\begin{document}

\title{Volume and Monge-Amp\`ere energy on polarized affine varieties}
	
\author{Yueqiao Wu}

\begin{abstract}
Let $(X, \xi)$ be a polarized affine variety, i.e. an affine variety $X$ with a (possibly irrational) Reeb vector field $\xi$. We define the volume of a filtration of the coordinate ring of $X$ in terms of the asymptotics of the average of jumping numbers. When the filtration is finitely generated, it induces a Fubini-Study function $\varphi$ on the Berkovich analytification of $X$. In this case, we define the Monge-Amp\`ere energy for $\varphi$ using the theory of forms and currents on Berkovich spaces developed by Chambert-Loir and Ducros, and show that it agrees with the volume of the filtration. In the special case when the filtration comes from a test configuration, we recover the functional defined by Collins-Sz\'ekelyhidi and Li-Xu.

\end{abstract}

\maketitle
	
\tableofcontents

\section{Introduction}
We work over an algebraically closed field $\KK$ of characteristic zero. Let $V$ be a Fano variety. The Yau-Tian-Donaldson conjecture, proved in \cite{CDS, tian}, and more recently using a variational approach in \cite{BBJ, Li19, LXZ}, states that $V$ admits a weak K\"ahler-Einstein metric if and only if it's K-polystable. This was generalized in \cite{CS19} to K\"ahler cones, relating the existence of a Ricci flat K\"ahler cone metric to K-stability. As part of the variational approach, a Monge-Amp\`ere energy was introduced for a test configuration, and later an algebraic formulation was given in \cite{LX18, LWX} computing the limit slope of the Monge-Amp\`ere energy.

Let $(X = \Spec R, \TT, \xi)$ be a polarized affine variety, i.e. $X$ is a normal affine variety, $\TT$ is a torus of automorphisms with a unique fixed point $0\in X$ with ideal $\fm$, and $\xi$ is a vector field generating a subtorus of $\TT$ and acting with positive weights. We will usually omit $\TT$ and write $(X, \xi)$ to mean the polarized affine variety. Let $R = \bigoplus_{\substack{\alpha \in \Lambda}} R_\alpha$ be the weight decomposition of $R$ with respect to $\TT$, where $\Lambda = \{\alpha\in \Hom(\TT, \GG_m): R_\alpha\neq 0\}$. Let $n$ be the dimension of $X$.

If $(\cX, \xi, \eta)$ is a $\QQ$-Gorenstein test configuration for $(X, \xi)$, then the following functional
\[\frac{D_{-\eta} \vol_{\cX_0}(\xi)}{\vol(\xi)}\]
is defined for $(\cX, \xi, \eta)$ in \cite{CSIrregular, LX18, LWX} as the limit slope of the Monge-Amp\`ere energy defined in \cite{CS19}.
Here $$\vol(\xi) = \lim\limits_{m\to \infty}\frac{\sum_{\substack{\la \alpha, \xi\ra\le m}}\dim R_\alpha}{m^n/n!}, $$
and
$$D_{-\eta} \vol_{\cX_0}(\xi) = \left.\frac{d}{d\varepsilon} \right|_{\varepsilon = 0} \vol_{\cX_0}(\xi - \varepsilon \eta)$$ 
is the directional derivative of the volume function along the direction $-\eta$.

In this paper, we aim to take two different perspectives towards this functional, and generalize it to a functional associated to a finitely generated filtration. 

First, following \cite{Nystrom, BHJ17, BlumJonsson}, we realize the energy functional as asymptotics of the average of jumping numbers of some filtration. More precisely, each test configuration $(\cX = \Spec \cR, \xi, \eta)$ gives rise to a filtration $\cF$ on $R$ as follows:
\[ \cF^\lambda R_\alpha = \{f\in R_\alpha : t^{-\lceil\lambda\rceil}\bar{f}\in \cR_\alpha\},\]
where $\bar{f}$ is the pullback of $f$ along the morphism $\cX \times_{\AA^1}\GG_m \cong X\times \GG_m\to X$. 
We write $$R_m = \bigoplus_{\substack{\la \alpha, \xi\ra \le m}} R_\alpha,$$
and 
$$\cF^\lambda R_m =  \bigoplus_{\la \alpha, \xi\ra \le m} \cF^{\lambda} R_\alpha, $$
for each integer $m\ge 1$.
Such filtrations are finitely generated filtration on $R$. The following constructions are thus made more generally for any finitely generated filtration $\cF$.
On each $R_m$, define the \emph{sequence of jumping numbers} 
$$0\leq a_{m,1} \leq \cdots \leq a_{m, N_m},$$
where $N_m = \dim_\KK R_m$, via
$$ a_{m,j} = a_{m,j}(\cF) = \inf\{\lambda \in \RR_+: \codim \cF^\lambda R_m \geq j\}, \  1\leq j\leq N_m.$$
Consider the average 
\[S_m(\cF) = \frac{1}{mN_m} \sum_{j=1}^{N_m} a_{m,j}.\]
Using an analog of the Okounkov bodies, we show that the limit
$S(\cF) = \lim\limits_{m\to \infty} S_m(\cF)$ exists. We define the \emph{volume} of the filtration $\cF$ to be $S(\cF)$.
In the case when $\xi$ is rational, and $(X, \xi)$ viewed as the cone over a polarized pair, we show that the volume of the filtration agrees with the similarly defined S-invariant defined in \cite{BlumJonsson} up to a constant, and it is shown in \cite{BHJ17} that this $S$-invariant serves as the non-Archimedean Monge-Amp\`ere energy. 
When the filtration is induced by a valuation, the volume also generalizes the S-invariant appeared in \cite{XuZhuang20}.

Next, we try to write the Monge-Amp\`ere energy as an integral of functions on the Berkovich analytification $X^{\an}$ of $X$ with respect to the trivial valuation on $\KK$. Similarly as in \cite{BHJ17}, each finitely generated filtration gives a
Fubini-Study function $\varphi$ of the form
\[\varphi = \max_{1\le j\le N} \{\frac{\log|f_j|+\chi(f_j)}{\la \alpha_j, \xi\ra}, f_j\in R_{\alpha_j}\},\]
where $\chi: R \to \RR$ is a norm of finite type induced by the filtration, and $f_j, 1\leq j\leq N$ generate a $\fm$-primary ideal.
In particular, the trivial test configuration gives a function $\varphi_\xi$, which can be seen as an analog of the trivial metric in the global case (see \cite{BJ18}), or as an analog of the psh potential for the reference metric in the complex case. Note that for $\varphi, \varphi_\xi$ to be written as a finite max, we need that the filtration be finitely generated.
Following the formalism of forms and integrals on Berkovich spaces in \cite{CLD12}, we can define the Monge-Amp\`ere operator. For Fubini-Study functions $\varphi_1, \cdots, \varphi_{n-1}$, we define an atomic measure with finite support on the subset $\{\varphi_\xi = 0\}\subset X^{\an}$ to be 
	\[d'd''\varphi_1\wedge \cdots \wedge d'd''\varphi_{n-1}\wedge d'd''(\max\{\varphi_\xi, 0\}).\]
	The theory of Chambert-Loir and Ducros allows us to define the non-Archimedean Monge-Amp\`ere energy as
\[E^{\na}(\varphi) = \frac{1}{n\vol(\xi)}\sum_{j=0}^{n-1} \int_{X^{\an}\setminus\{0\}} (\varphi - \varphi_\xi) (d'd''\varphi)^j\wedge (d'd''\varphi_\xi)^{n-j-1}\wedge d'd''(\max\{\varphi_\xi, 0\}).\]
Note that this formula is identical to the one for complex Monge-Amp\`ere energy, see Remark \ref{FormulaEqual} for more details.

Our main result is

\begin{thm}\label{main} For a Fubini-Study function $\varphi$ induced by a finitely generated filtration $\cF$, we have
	\[E^{\na}(\varphi) = S(\cF),\]
	and when the filtration comes from a test configuration $(\cX,\xi, \eta)$ with $\eta$ in the Reeb cone of the torus $\TT\times \GG_m$, 
	\[E^{\na}(\varphi) =C(n) \frac{D_{-\eta} \vol_{\cX_0}(\xi)}{\vol(\xi)}\]
	up to a dimensional constant $C(n)$.
\end{thm}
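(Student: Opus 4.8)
The plan is to prove the two assertions in sequence, establishing the first equality $E^{\na}(\varphi) = S(\cF)$ in full generality for finitely generated filtrations, and then specializing to the test-configuration case to match the Collins--Sz\'ekelyhidi / Li--Xu functional. For the first equality I would proceed in three stages. First, I would unwind the definition of $E^{\na}(\varphi)$ via the non-Archimedean Monge--Amp\`ere formalism of Chambert-Loir--Ducros: the key point is that a finitely generated filtration yields a (relatively) ample model, so $\varphi$ and $\varphi_\xi$ are both piecewise affine Fubini-Study functions on $X^{\an}$, and the mixed forms $(d'd''\varphi)^j\wedge(d'd''\varphi_\xi)^{n-j-1}\wedge d'd''(\max\{\varphi_\xi,0\})$ are represented by intersection-theoretic data on a common model $\cX$ of $X\times\AA^1$ (or the relevant compactification). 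Concretely, I would write $E^{\na}(\varphi)$ as an intersection number on this model, using the dictionary between Fubini-Study functions and $\QQ$-line-bundle-plus-vertical-divisor data, together with integration by parts, to telescope $\sum_{j=0}^{n-1}(\varphi-\varphi_\xi)(d'd''\varphi)^j\wedge(d'd''\varphi_\xi)^{n-j-1}$ into a single ``energy'' intersection number $\tfrac{1}{n+1}$-type expression.

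Second, and in parallel, I would compute $S(\cF)$ asymptotically. By the Okounkov-body argument already invoked in the paper (the construction showing $S(\cF)=\lim_m S_m(\cF)$ exists), the averaged jumping numbers $\tfrac{1}{mN_m}\sum_j a_{m,j}$ converge to an integral $\int_{\Delta} y\, d\mu$ over the associated Okounkov body $\Delta\subset\RR^{n}$ (with the extra ``$y$'' coordinate recording the filtration level), normalized by $\vol(\xi)$. The graded-algebra $\bigoplus_{m,\lambda}\cF^\lambda R_m$ is finitely generated, so this body is a rational polytope and $S(\cF)$ is the normalized ``average height'' of $\Delta$. The bridge between the two computations is the standard fact (going back to the global case in \cite{BHJ17,BlumJonsson}) that the non-Archimedean energy intersection number equals the same average-height integral over the Okounkov body: both sides are, after the dust settles, $\tfrac{1}{n\vol(\xi)}$ times the ``vertical'' part of a top self-intersection of the ample model. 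I expect the main obstacle to be exactly this matching in the present irrational-Reeb / affine-cone setting: one must check that the Chambert-Loir--Ducros integrals over $X^{\an}\setminus\{0\}$ localize correctly near the fixed point $0$ (the measures are atomic on $\{\varphi_\xi=0\}$, so convergence/finiteness near $0$ needs the positivity of the $\xi$-weights), and that the filtration-by-$R_m$ truncation is compatible with passing to the Berkovich model — essentially a continuity/density statement reducing the irrational $\xi$ to rational approximations, where the cone-over-a-polarized-pair picture and the cited results apply verbatim.

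For the second assertion, I would start from the explicit filtration $\cF^\lambda R_\alpha=\{f\in R_\alpha: t^{-\lceil\lambda\rceil}\bar f\in\cR_\alpha\}$ attached to a $\QQ$-Gorenstein test configuration $(\cX,\xi,\eta)$ with $\eta$ in the Reeb cone of $\TT\times\GG_m$. Here the Okounkov body / intersection number from the first part has a direct geometric interpretation: the average-height integral over $\Delta$ computes precisely the derivative of the volume function $\varepsilon\mapsto\vol_{\cX_0}(\xi-\varepsilon\eta)$ at $\varepsilon=0$, because perturbing the Reeb field by $-\eta$ is the infinitesimal version of shifting by the filtration grading, and the volume $\vol$ is the leading coefficient (normalized by $n!$) of the Hilbert function $\dim R_m$ — exactly the normalization in the definition of $S_m(\cF)$. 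So I would differentiate $\vol_{\cX_0}(\xi-\varepsilon\eta)=\lim_m \tfrac{n!}{m^n}\sum_{\la\alpha,\xi\ra-\varepsilon\la\alpha,\eta\ra\le m}\dim R_\alpha$ under the limit (justified by finite generation, which gives uniform polynomial control), identify the $\varepsilon$-derivative with a sum over jumping numbers, and read off $D_{-\eta}\vol_{\cX_0}(\xi)/\vol(\xi) = C(n)^{-1}\,S(\cF)$ for an explicit combinatorial constant $C(n)$ (I anticipate $C(n)=\tfrac{1}{n+1}$ or $n$ depending on how the average is normalized, to be pinned down by a one-dimensional check). Combined with the first equality $E^{\na}(\varphi)=S(\cF)$, this yields $E^{\na}(\varphi)=C(n)\,D_{-\eta}\vol_{\cX_0}(\xi)/\vol(\xi)$. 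The delicate point here is the interchange of $\tfrac{d}{d\varepsilon}$ with $\lim_{m\to\infty}$, which I would handle by the convexity/monotonicity of $\varepsilon\mapsto\vol_{\cX_0}(\xi-\varepsilon\eta)$ (it is a restriction of the volume function on the Reeb cone, hence smooth there since $\eta$ lies in the interior) together with the asymptotic Riemann-sum estimate for the truncated Hilbert sums.
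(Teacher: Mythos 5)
Your top-level architecture coincides with the paper's: reduce to the quasi-regular case by writing $X=C(V,L)$, invoke the known global identity between the non-Archimedean energy of a Fubini--Study metric on $L^{\an}$ and the $\tilde S$-invariant of the filtration, and handle irrational $\xi$ by rational approximation together with continuity of $S(\cF,\xi)$ in $\xi$. However, two of your load-bearing steps have genuine gaps. First, the proposed representation of the mixed forms $(d'd''\varphi)^j\wedge(d'd''\varphi_\xi)^{n-j-1}\wedge d'd''(\max\{\varphi_\xi,0\})$ by ``intersection-theoretic data on a common model of $X\times\AA^1$'' is not available here: $X$ is affine, $X^{\an}\setminus\{0\}$ is non-compact, and for irrational $\xi$ the function $\varphi_\xi$ corresponds to no line bundle or divisor at all. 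The factor $d'd''\varphi_\xi^+$ is precisely the device that cuts the integral down to an atomic measure on the compact ``link'' $\{\varphi_\xi=0\}$, and establishing that this measure is finite, atomic, and equal to the pushforward of the Chambert-Loir--Ducros Monge--Amp\`ere measure from $V^{\an}$ under the embedding $V^{\an}\hookrightarrow X^{\an}$ is the actual technical content of the quasi-regular case (in the paper, a local computation in tropical charts using regularized maxima, Stokes/Green, and the $\xi$-equivariance of $\varphi$). Your sketch assumes this comparison as ``the standard fact'' from the projective setting, but that is exactly what must be proved anew in the cone setting.

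Second, for the test-configuration identity your plan to differentiate $\vol_{\cX_0}(\xi-\varepsilon\eta)$ under the limit, justified by convexity and Riemann-sum estimates, does not go through in the irregular case: when $\xi$ is irrational the truncated sums $\sum_{\la\alpha,\xi\ra\le m}$ are not eventually polynomial in $m$, and controlling the error in the exchange of $d/d\varepsilon$ with $\lim_m$ is the hard point. The paper instead uses the meromorphic expansion of the equivariant index character $\sum_{\alpha,k}e^{-t\la\alpha,\xi\ra}k\,w_{\alpha,k}=D_{-\eta}\vol_{\cX_0}(\xi)\,t^{-n-1}+O(t^{-n})$ from \cite{CSIrregular} to identify the derivative, proves the needed identity for rational $\xi$ by reduction to the polarized pair $(V,L)$, and then transfers it to irrational $\xi$ using continuity of both sides in $\xi$ (smoothness of $\vol_{\cX_0}$ on the Reeb cone on one side, and a separate homogeneity-plus-monotonicity argument for $S(\cF,\xi)$ on the other). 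Without some substitute for the index-character input, your interchange of limits is unjustified. Your Okounkov-body computation of $S(\cF)$ as an average of the concave transform over $\Delta$, and the expectation $C(n)=\tfrac1{n+1}$, do match the paper.
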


The main strategy is to first prove the case when $\xi$ is rational. In this case, write $X = C(V, L)$ as the cone of a polarized pair $(V, L)$. The test configuration for $(X, \xi)$ corresponds to the cone of an ample test configuration $(\cV, \cL)$ for $(V, L)$. This gives us a Fubini-Study metric $\phi$ on $L^{\an}$. We show that $E^{\na} (\varphi) = E^{\na}(\phi)$. The general result then follows from approximating $\xi$ by rational ones.
We anticipate that this energy be extended to a bigger class of functions, e.g. the finite energy class, and other related non-Archimedean functionals be defined in our future work.

Finally, we look at the toric case. When $X$ is an affine toric variety of dimension $n$ associated to a cone $\sigma \subset N_\RR$, $\xi$ can be thought of as a vector in the interior of $\sigma$, and Berman defined in \cite{BermanToric} the Monge-Amp\`ere energy of a bounded convex function $\psi$ on $\sigma^\vee$ to be 
\[E(\psi) = \frac{1}{\vol(\xi)} \int_{P_\xi} \psi d\lambda,\]
where $P_\xi = \{u\in \sigma^\vee: \la u, \xi\ra =1\}$, and $d\lambda$ denotes the Lebesgue measure on $P_\xi$. Each filtration $\cF$ gives a function $\psi: \sigma^\vee \cap M \to \RR$ by $\psi(u) = \sup \{\lambda: \cF^\lambda R_u \neq 0\}$, and this further extends to a convex sublinear function $\tilde{\psi}$ on $\sigma^\vee$.

Our next result relates this energy to the $S$ invariant: 

\begin{thm}\label{1.2}
	With the notation as above,
	\[S(\cF) = C(n)E(\tilde{\psi})\]
	for some constant $C(n)$.
\end{thm}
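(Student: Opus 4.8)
The plan is to compute both sides of the claimed identity $S(\cF) = C(n) E(\tilde\psi)$ explicitly in the toric setting, identify the common combinatorial integral, and match the dimensional constants. First I would unwind the definition of $S(\cF)$ using the Okounkov-body description established earlier in the paper: on each graded piece $R_m = \bigoplus_{\la \alpha, \xi\ra \le m} R_\alpha$, the jumping numbers $a_{m,j}(\cF)$ are, in the toric case, exactly the values $\psi(u)$ as $u$ ranges over the lattice points of $\sigma^\vee$ with $\la u, \xi\ra \le m$ (since each $R_u$ is one-dimensional, there is one jumping number per such lattice point, equal to $\sup\{\lambda : \cF^\lambda R_u \ne 0\} = \psi(u)$). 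Thus
\[
S_m(\cF) = \frac{1}{m N_m} \sum_{\substack{u \in \sigma^\vee \cap M \\ \la u, \xi\ra \le m}} \psi(u),
\qquad
N_m = \#\{u \in \sigma^\vee \cap M : \la u, \xi\ra \le m\}.
\]

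Next I would take the limit $m \to \infty$ by a standard lattice-point counting / Riemann-sum argument: rescaling $u \mapsto u/m$, the set $\{u \in \sigma^\vee : \la u,\xi\ra \le m\}$ becomes the dilation of the simplex-like region $\Delta_\xi = \{u \in \sigma^\vee : \la u, \xi\ra \le 1\}$, whose apex is the cone point. Since $\tilde\psi$ is sublinear (positively homogeneous of degree one) and bounded on $P_\xi$, we have $\psi(u) \approx m\,\tilde\psi(u/m)$ to leading order, so the numerator $\sum \psi(u) \sim m \cdot m^n \int_{\Delta_\xi} \tilde\psi\, du$ and the denominator $m N_m \sim m \cdot m^n \vol(\Delta_\xi)$. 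By homogeneity, integrating $\tilde\psi$ over the solid region $\Delta_\xi$ and over the slice $P_\xi$ differ only by an explicit constant depending on $n$ (coning off: $\int_{\Delta_\xi} \tilde\psi\, du = \frac{1}{n+1}\int_{P_\xi}\tilde\psi\, d\lambda$ after accounting for the normalization of $d\lambda$ relative to Lebesgue measure via $\xi$), and likewise $\vol(\Delta_\xi) = \frac{1}{n}\int_{P_\xi} d\lambda$ relates to $\vol(\xi)$ up to $n!$. Feeding these into the ratio gives $S(\cF) = C(n) \cdot \frac{1}{\vol(\xi)}\int_{P_\xi}\tilde\psi\, d\lambda = C(n) E(\tilde\psi)$ with $C(n)$ an explicit rational function of $n$ (I expect $C(n) = \tfrac{n}{n+1}$ or its reciprocal, depending on conventions).

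The step I expect to be the main obstacle is the careful bookkeeping of normalizations: one must reconcile (i) the counting measure on $\sigma^\vee \cap M$ versus Lebesgue measure $du$ on $M_\RR$, (ii) the measure $d\lambda$ on the hyperplane section $P_\xi$ appearing in Berman's definition versus the induced Euclidean measure, and (iii) the appearance of $\vol(\xi)$, which by its definition in the introduction equals $n! \lim_m N_m/m^n$, hence $\vol(\xi) = n!\,\vol(\Delta_\xi)$ in suitable coordinates. A subtlety is that when $\xi$ is irrational the "lattice points with $\la u,\xi\ra \le m$" do not form the dilation of a fixed polytope, but the region $\Delta_\xi$ is still a well-defined convex body and the equidistribution/Riemann-sum estimate still applies; I would handle this either directly or by approximating $\xi$ by rational vectors exactly as in the proof strategy for Theorem \ref{main}. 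Once all measures are pinned down the constant $C(n)$ is forced, and the convergence of $S_m(\cF)$ to the integral is guaranteed by the Okounkov-body machinery already invoked in the paper (which gives that $S(\cF)$ exists and equals the average of a piecewise-linear function over the body), so only the identification of that body with $\Delta_\xi$ and of that function with $\tilde\psi$ remains, both of which are immediate from the toric dictionary.
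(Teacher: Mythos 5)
Your proposal is correct and follows essentially the same route as the paper: identify the local Okounkov body with $Q_\xi=\{u\in\sigma^\vee:\la u,\xi\ra\le 1\}$ and the concave transform with $\tilde\psi$ (the two propositions the paper proves in Section 6), then pass from the integral over the solid region to the slice $P_\xi$ by radial integration using the homogeneity of $\tilde\psi$, which is exactly the paper's final computation. The only minor discrepancy is the value of the constant: the paper's bookkeeping yields $C(n)=\tfrac{1}{n+1}$ rather than $\tfrac{n}{n+1}$, but since the statement only asserts the existence of some $C(n)$ this does not affect correctness, and your fallback to the already-established convergence $S(\cF)=\lim_m S_m(\cF)=\frac{1}{\vol(\Delta)}\int_\Delta G\,d\rho$ legitimately replaces the delicate uniformity needed for a direct Riemann-sum argument.
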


\subsection*{Organization} This paper is organized as follows. In section 2, we briefly recall notions of valuations, test configurations, Berkovich analytifications, and the theory of forms and currents on Berkovich spaces. In section 3, we construct a local analog of Okounkov bodies associated to a filtration, using which we define the volume of a filtration. A first formulation of the NA Monge-Amp\`ere energy is given in section 4, and we show that it computes the limit slope of the Monge-Amp\`ere energy defined in \cite{CS19} if the filtration comes from a test configuration. Section 5 is devoted to another formulation of the energy using forms and currents on Berkovich spaces, and Theorem \ref{main} is proved here. Finally, section 6 studies similar formulations in the toric case.

\subsection*{Acknowledgement.} I would like to thank my advisor, Mattias Jonsson, for suggesting the problem and kindly sharing his ideas. I'm grateful to Chi Li for answering my questions on his papers. I thank Harold Blum, S\'ebastien Boucksom, Gabor Sz\'ekelyhidi, Chenyang Xu and Ziquan Zhuang for helpful comments on a preliminary version of the paper. I'm also grateful to the anonymous referee for very helpful comments.

\section{Background}

\subsection{Polarized affine varieties and test configurations}\label{quasiregular}
We work over an algebraically closed field $\KK$ of characteristic 0. Let $X = \Spec R$ be an $n$-dimensional normal affine variety(not necessarily irreducible).

Let $\TT$ be a split torus acting on $X$.
Set $N: =\Hom(\GG_m, \TT)$ to be the co-weight lattice, and let $M$ be the dual lattice. We then have a weight decomposition on $R$:
\[R = \bigoplus_{\alpha\in \Lambda} R_\alpha,  \]
where $\Lambda := \{\alpha \in M: R_\alpha\neq 0\}.$
A $\TT$ action on $X$ is \emph{good} if has a unique fixed point $0\in X$ that is in the closure of all $\TT$-orbits and $R_0= \KK$.  In this paper, we will always assume that the action is good.

We now set up some definitions introduced in \cite{CS19}.
\begin{defn}
	The Reeb cone of $X$ with respect to the $\TT$ action is 
	\[\cC := \{\xi\in N_\RR:=N\otimes \RR: \la \xi, \alpha\ra >0, \  \forall \alpha\in \Lambda\setminus\{0\}\}\]
	A vector $\xi\in \cC$ is called a Reeb vector field.
\end{defn}
\begin{defn}
	A polarized affine variety is a triple $(X, \TT, \xi)$ such that $\TT$ is a good action on $X$, and $\xi$ is a Reeb vector field. Further, we call a polarized affine variety $(X, \TT, \xi)$ quasi-regular if $\xi$ is rational, i.e., $\xi\in \NN_\QQ$. Otherwise, it's said to be irregular.
\end{defn}

\begin{remark}
This is more commonly known as a log Fano cone singularity when $X$ has klt singularities. We adopt a different name here since we do not assume $X$ has klt singularities.
\end{remark}

\begin{defn}
	For a Reeb vector field $\xi \in \cC$, its volume is defined to be
	\[\vol(\xi):=\lim\limits_{m\to \infty}\frac{\sum_{\la \alpha, \xi\ra\le m}\dim R_\alpha}{m^n/n!}.\]
\end{defn}

The above limit exists by \cite{CSIrregular}. 

\begin{defn}
	A (real)  semivaluation $v$ on $X$ is a map $v: R\to \RR \cup \{\infty\}$ such that 
	\begin{enumerate}
		\item $v(f+g)\ge \min\{v(f), v(g)\} ,  \forall f, g\in R$;
		\item $v(fg) = v(f)+v(g), \forall f, g\in R$;
		\item $v(0)=\infty;$
		\item $v|_{\KK^*} = 0$.
	\end{enumerate}
	A valuation $v$ is a semivaluation such that $v(f)=\infty$ if and only if $f=0.$
\end{defn}

When $X$ is reduced and irreducible,  a Reeb vector field $\xi$ induces a valuation on $X$, which we denote by $v_\xi$:
\[v_\xi(f) = \min_\alpha\{\la \xi, \alpha\ra: f=\sum_\alpha f_\alpha, f_\alpha\neq 0\}.\]   
In this case, it was noted in \cite{LWX} that the above volume agrees with the volume for the valuation $v_\xi$, as defined in \cite{ELS}.

We remark that $\xi$ plays the role of a polarization (see \cite{CSIrregular}). Indeed, when $\xi$ is rational, say $\xi\in \frac 1l N$, it generates a $\GG_m$ action. So one can think of $X\setminus\{0\}$ as the complement of the zero section in the total space of an ample orbiline bundle over some orbifold. In algebro-geometric terms, $X\setminus \{0\}$ is a Seifert $\GG_m$-bundle over a projective variety $V = \Proj R$. It's shown in \cite{kollar} that the coordinate ring $R$ with the new grading induced by the $\xi$ action can be viewed as the section ring of some $\QQ$-line bundle $L$ on $V$:
\[R = \bigoplus_\alpha R_\alpha = \bigoplus_k\left(\bigoplus_{\la \xi, \alpha\ra = \frac kl}R_\alpha\right) = \bigoplus_{k\in 
\ZZ} H^0(V, kL).\]
We refer to \cite{RossThomas, kollar} for more details.

\begin{defn}
	Let $(X, \TT, \xi)$ be a polarized affine variety. A test configuration is a quadruple $(\cX, \TT, \xi, \eta)$ with a flat projection $\pi: \cX\to \AA^1$ such that
	\begin{enumerate}
		\item $\cX = \Spec \cR$ is an affine variety
		\item $\eta$ is a $\GG_m$ action on $\cX$ lifting the usual action on $\AA^1$. 
		\item $\TT$ acts on $\cX$ fiberwise. Further, the action commutes with $\eta$, and coincides with the action on the first factor restricted to $\cX\times_{\AA^1} (\AA^1\setminus\{0\})\cong X\times (\AA^1\setminus\{0\})$.
	\end{enumerate}

\end{defn}

\begin{remark}
In the case when $\xi$ is rational, if $X$ is further $\QQ$-Gorenstein, then the polarized pair $(V, B, L)$ is actually a log Fano pair (see \cite{LWX}). 
We also remark that in \cite{LX18, LWX}, only $\QQ$-Gorenstein or special test configurations are considered. For the purpose of studying the Monge-Amp\`ere energy, we don't need such assumptions, and the functional defined in \emph{loc.cit.} is still well-defined without the assumptions on test configurations.

\end{remark}

\subsection{Berkovich analytifications}
The Berkovich analytification functor associates to each algebraic variety $X$ over $k$ an analytic space $X^{\an}$ (see \cite{Berk12}). For our purposes, we shall only consider the case when $X = \Spec R$ is affine, with $R$ a finitely generated $\KK$-algebra. Working additively, we have
\[X^{\an}  = \{\mathrm{multiplicative \ semivaluations \ on \ } R \mathrm{ \ that \ are \ trivial \ on \ } \KK\}.\]
The topology on $X^{\an}$ is the weakest one such that $v\mapsto v(f)$ is continuous for all $f\in R.$ The space $X^{\an}$ contains the set of valuations $X^{\mathrm{val}}$ as a dense subset. We will denote by $|X^{\an}|$ its underlying topological space. To each $v\in X^{\an}$, we can also associate a complete residue field $\mathscr{H}(v)$, defined as the completion of the residue field $k(\ker(v))$ with respect to the norm $|\cdot| = e^{-v(\cdot)}$, where $\ker(v) = \{f\in R: v(f) = \infty\}\in X$.  

Assume $(X =\Spec R, \TT, \xi)$ is a polarized affine variety. There is a natural $\TT(\KK)$ action on $X^{\an}$ as follows: 
if $a\in \TT(\KK)$, $v\in X^{\an}$ and $f = \sum_\alpha f_\alpha$ with $f_\alpha\in R_\alpha$, then 
\[(a\cdot v) (f) = v(\sum_\alpha a^\alpha f_\alpha).\]
\begin{lem}
	Let $X^{\an, \TT}$ denote the set of $\TT$-invariant semivaluations on $X$.Then
	\[X^{\an, \TT} = \{v\in X^{\an}: v(f) =\min_\alpha v(f_\alpha), \forall f=\sum_\alpha f_\alpha\in R\}.\]
\end{lem}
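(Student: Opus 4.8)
The plan is to prove the two inclusions separately. The easy direction is ``$\supseteq$'': if $v(f) = \min_\alpha v(f_\alpha)$ for every decomposition $f = \sum_\alpha f_\alpha$ into weight components, then for any $a \in \TT(\KK)$ we have $v(a^\alpha f_\alpha) = v(f_\alpha)$ because $a^\alpha \in \KK^*$ and $v$ is trivial on $\KK^*$; hence $(a\cdot v)(f) = v(\sum_\alpha a^\alpha f_\alpha) = \min_\alpha v(a^\alpha f_\alpha) = \min_\alpha v(f_\alpha) = v(f)$, so $v$ is $\TT$-invariant. The reverse inclusion ``$\subseteq$'' is the substantive part: assume $a \cdot v = v$ for all $a \in \TT(\KK)$ and deduce $v(f) = \min_\alpha v(f_\alpha)$.

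For ``$\subseteq$'', fix $f = \sum_{\alpha \in S} f_\alpha$ with $S$ a finite set of weights and all $f_\alpha \neq 0$. The inequality $v(f) \geq \min_\alpha v(f_\alpha)$ is automatic from the ultrametric axiom, so the point is to rule out strict inequality, i.e. to show $v(f) \leq v(f_{\alpha_0})$ where $\alpha_0$ achieves the minimum. The idea is to extract each weight component $f_\alpha$ from $f$ as an ``average'' over the torus action and use $\TT$-invariance of $v$ together with the ultrametric inequality. Concretely, since $\KK$ is algebraically closed of characteristic zero it is infinite, and one can choose finitely many elements $a_1, \dots, a_r \in \TT(\KK)$ (with $r = |S|$) such that the matrix $(a_i^\alpha)_{i, \alpha \in S}$ is invertible over $\KK$ — this is a Vandermonde-type nonvanishing statement for characters of a torus, using that distinct weights give distinct characters and $\KK$ is infinite. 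Then each $f_\alpha$ is a $\KK$-linear combination $f_\alpha = \sum_i c_{i,\alpha}\, (a_i \cdot \text{action on } f)$, more precisely $f_\alpha = \sum_i c_{i,\alpha} \sum_\beta a_i^\beta f_\beta$, and applying $v$ gives $v(f_\alpha) \geq \min_i v\!\left(\sum_\beta a_i^\beta f_\beta\right)$. But $\sum_\beta a_i^\beta f_\beta$ is exactly the element whose value under $v$ equals $(a_i^{-1} \cdot v)(f) = v(f)$ by $\TT$-invariance (up to bookkeeping of whether one uses $a_i$ or $a_i^{-1}$). Hence $v(f_\alpha) \geq v(f)$ for every $\alpha \in S$, so $\min_\alpha v(f_\alpha) \geq v(f)$, which combined with the trivial inequality yields equality.

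The main obstacle I anticipate is the linear-algebra step: making precise that the ``weight projections'' $f \mapsto f_\alpha$ are realized by $\KK$-linear combinations of the operators $f \mapsto \sum_\beta a^\beta f_\beta$ for suitable $a \in \TT(\KK)$, uniformly enough that applying a semivaluation (which is only subadditive, not additive) still gives the wanted bound. This works because a finite $\KK$-linear combination $\sum_i c_i g_i$ always satisfies $v(\sum_i c_i g_i) \geq \min_i v(g_i)$ (as $v(c_i) = 0$), so no additivity is needed — only the invertibility of the character matrix over $\KK$, which holds since $\KK$ is infinite and the characters $\alpha \in S \subset M$ are pairwise distinct. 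One should be slightly careful that the semivaluation may take the value $\infty$ on some components, but this causes no trouble since the inequalities above are valid in $\RR \cup \{\infty\}$ with the usual conventions.
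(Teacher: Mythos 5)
Your proposal is correct and follows essentially the same route as the paper: the easy inclusion via triviality of $v$ on $\KK^*$, and the reverse inclusion by choosing $a_1,\dots,a_k\in\TT(\KK)$ making the character matrix $(a_i^{\alpha_j})$ invertible, writing each $f_\alpha$ as a $\KK$-linear combination of the translates $a_i\cdot f$, and applying the ultrametric inequality together with $v(a_i\cdot f)=v(f)$. The bookkeeping points you flag (the $a_i$ versus $a_i^{-1}$ convention and the value $\infty$) are harmless, exactly as you say.
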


\begin{proof}
	If $v$ is a semivaluation such that $v(f) = \min_\alpha v(f_\alpha)$, where $f = \sum_\alpha f_\alpha, f_\alpha \neq 0$, then it's clear that $v$ is $\TT$-invariant. Conversely, given $f= f_{\alpha_1}+\cdots + f_{\alpha_k}\in R$, we always have $v(f)\ge \min_j v(f_{\alpha_j})$. To prove the reverse inequality, note that
	\[A = \begin{pmatrix}
		a_1^{\alpha_1}       &  \dots & a_{1}^{\alpha_k} \\
		\vdots & \ & \vdots \\
		a_{k}^{\alpha_1}       & \dots & a_{k}^{\alpha_k}
	\end{pmatrix}\]
	is invertible for generic choices of $a_1, \cdots, a_k\in \TT(\KK)$. Here $a_i^{\alpha_j}$ are written in multi-index notations. Choose $a_1, \cdots, a_k$ such that $A$ is invertible. Then each $f_{\alpha_i}$ can be written as a linear combination of $(a_j\cdot f)_{j=1}^k$: 
	\[f_{\alpha_i} = \sum_{j=1}^k \lambda_{ij} a_j\cdot f.\]
	Thus $v(f_{\alpha_i}) \geq \min_{1\le j\le k} v(a_j\cdot f) = v(f), $ for $1\le i\le k$. Hence one has $v(f) = \min_{1\le i\le k} v(f_{\alpha_i})$.
\end{proof}

We now see how Berkovich spaces give a way to understand the valuation $v_\xi$ defined earlier. By functoriality, the $\TT$ action on $X$ gives a $\TT^{\an}$ action on $X^{\an}$ via analytification, that is, a map of $\KK$-analytic spaces $\mu: \TT^{\an}\times X^{\an}\to X^{\an}$. Let $g\in \TT^{\an}$, and $v\in X^{\an}$. It's worth noting that $|\TT^{\an}\times X^{\an}|\neq |\TT^{\an}|\times |X^{\an}|$ as topological spaces, and we will denote by $\pi: |\TT^{\an}\times X^{\an}|\to |\TT^{\an}|\times |X^{\an}|$ the projection map.
A point $x$ in a $\KK$-analytic space $Z$ is \emph{universal} if for any valuation field $\LL/\KK$, the algebra $\cH(x)\widehat{\otimes}_\KK\LL$ is multiplicative. It's shown in \cite{Poineau} that when $\KK$ is algebraically closed, any point $v\in X^{\an}$ is universal. In our case, we have that any $v\in X^{\an}$ and $g\in \TT^{\an}$ are universal, and Berkovich (see \cite{Berk12}) defined $g*v$ as follows: 
Let $w$ be the point of $\pi^{-1}(g,v)$ corresponding to the norm on $\cH(g)\widehat{\otimes}_\KK \cH(v)$. Then $g*v := \mu(w)$. Thus the $*$-multiplication gives a map $|\TT^{\an}|\times |X^{\an}|\to |X^{\an}|$.

We can think of $\xi$ as a monomial valuation in $\TT^{\an}$ generating an action on $X^{\an}$. As a map of algebras, the action of $\TT^{\an}$ is given by 
\[R\to R\otimes \KK[M], f_\alpha\mapsto f_\alpha\otimes \chi^\alpha.\]
Thus $(\xi*v)(f_\alpha) = v(f_\alpha)+\la \xi, \alpha\ra$, and more generally if $f = \sum_\alpha f_\alpha$ with $f_\alpha\neq 0$, then
$$(\xi*v)(f) = \min_\alpha\{v(f_\alpha)+\la \xi, \alpha\ra\}.$$
When $X$ is reduced and irreducible, and $v$ is the trivial valuation, $\xi *v$ is the valuation $v_\xi$. 
\subsection{Forms and currents on Berkovich spaces}
In this section, we introduce some basics of forms and currents due to Chambert-Loir and Ducros that will allow us to do integration over the Berkovich analytification $X^{\an}$. We refer to \cite{CLD12} for more details.

Let $f_1, \cdots, f_p$ be invertible functions on a $\KK$-analytic space $Z$ of dimension $n(\le p)$. These functions give a map 
$$f= (f_1, \cdots, f_p): Z\to \GG_m^{p, \an}$$
 and we call this map a moment map. After composing with the tropicalization map 
$$\trop: \GG_m^{p,\an} =( \Spec \KK[T_1^{\pm}, \cdots, T_p^{\pm}])^{\an}\to \RR^p, x\mapsto (-\log|T_1(x)|, \cdots, -\log|T_p(x)|),$$ we get a map $f_{\trop}: Z\to \RR^p$. 
Lagerberg \cite{lagerberg} defined the notion of super forms and super currents on Euclidean spaces. Roughly speaking, differential forms on $Z$ are locally given by pullbacks of super forms on $\RR^p$ via $f_{\trop}$. Currents are defined as continuous linear functionals on the space of smooth compactly supported forms on $Z$. In particular, functions on $Z$ are said to be continuous (resp.\ smooth) if they are locally the pullback of continuous (resp.\ smooth) functions on the Euclidean space via some moment map. 

Any $(n, n)$-form will be supported on the $n$-dimensional faces of a polyhedron in $Z$, called the \emph{characteristic polyhedron} $\Sigma_f\subset Z$, written as $\Sigma_f^{(n)}$, and we will denote by $\Pi_f$ the image of $\Sigma_f^{(n)}$ under $f_{\trop}$. A choice of a volume form on $\RR^p$ gives naturally a volume form $\mu_{\Pi_f}$ on $\Pi_f$ and via the moment map, a volume form $\mu_{\Sigma_f^{(n)}}$  on $\Sigma_f^{(n)}$.
If $\omega= f_{\trop}^*\alpha$ for some $(n,n)$-form $\alpha$ on $\RR^p$, then 
$$\int_Z \omega := \int_Z \la \omega, \mu_{\Sigma_f^{(n)}}\ra = \int_{f_{\trop}(Z)} \la\alpha, \mu_{\Pi_f}\ra.$$

One similarly defines the boundary integral for an $(n-1, n)$ form $\omega$, denoted by $\int_Z^\partial \omega$. In this context, we have versions of Stokes' and Green's theorem:
\begin{thm}[{\cite[Th\'eor\`eme 3.12.1, 3.12.2]{CLD12}}]  Let $Z$ be a separated, locally holomorphically separated $\KK$-analytic space.
	\begin{enumerate}
		\item Let $\omega$ be an $(n-1, n)$-form on $Z$ with compact support. Then 
		\[\int_Z d'\omega = \int_Z^\partial \omega.\]
		\item Let $\alpha$ be a $(p, p)$-form and $\beta$ be a $(q,q)$-form with $p+q = n-1$, and $\mathrm{supp}(\alpha)\cap \mathrm{supp}(\beta)$ compact. Then 
		\[\int_Z (\alpha\wedge d'd''\beta- d'd''\alpha\wedge \beta) = \int_Z^\partial (\alpha\wedge d''\beta - d''\alpha\wedge \beta).\]
	\end{enumerate}
\end{thm}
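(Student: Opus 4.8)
The plan is to prove both formulas by reducing, through partitions of unity, to a statement about Lagerberg super-forms on a single weighted polyhedral complex in $\RR^p$, where the classical Stokes formula can be applied facet-by-facet, and then to deduce the Green-type identity (2) formally from the Stokes identity (1).

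First, since $\omega$ (resp. $\alpha$ and $\beta$) has compact support and forms on $Z$ are by definition locally pullbacks of super-forms along moment maps, I would cover $\mathrm{supp}(\omega)$ (resp. $\mathrm{supp}(\alpha)\cap\mathrm{supp}(\beta)$) by finitely many charts $U_i$, on each of which there is a moment map $f^{(i)}=(f_1,\dots,f_{p_i})\colon U_i\to \GG_m^{p_i,\an}$ with $\omega|_{U_i}=(f^{(i)}_{\trop})^*\alpha_i$ for a super-form $\alpha_i$ on $\RR^{p_i}$. Taking a smooth partition of unity $\{\rho_i\}$ subordinate to this cover (this is where the separated and locally holomorphically separated hypotheses enter, guaranteeing that the characteristic polyhedra and their boundary integrals glue coherently), and using linearity of both $\int_Z d'(\cdot)$ and $\int_Z^\partial(\cdot)$ together with $\sum_i\rho_i=1$ on the support, it suffices to establish each identity when $\omega$ is supported in a single chart and equals a pullback $f_{\trop}^*\alpha$.

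On such a chart, by the definition of the integral one has $\int_Z d'\omega=\int_{f_{\trop}(Z)}\langle d'\alpha,\mu_{\Pi_f}\rangle$, the integration taking place over the $n$-dimensional weighted polyhedral complex $\Pi_f=f_{\trop}(\Sigma_f^{(n)})$, and $d'$ commutes with $f_{\trop}^*$ by construction. The key geometric input is that $\Pi_f$, arising as the tropicalization of an $n$-dimensional analytic space, is a \emph{balanced} weighted polyhedral complex: at each $(n-1)$-dimensional face, the primitive outward conormals of the adjacent facets, weighted by their multiplicities, sum to zero. Applying the ordinary Stokes formula on each top-dimensional facet of $\Pi_f$ produces boundary contributions along interior $(n-1)$-faces and along the genuine boundary; since the boundary integrand depends linearly on the conormal direction, the balancing condition forces the interior contributions to cancel, leaving precisely the terms that the definition of $\int_Z^\partial\omega$ prescribes. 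Reassembling the charts via the partition of unity then yields (1). The step I expect to be the main obstacle is exactly this cancellation of interior-facet terms: one must check that the multiplicities defining $\mu_{\Sigma_f^{(n)}}$ are the tropical multiplicities for which balancing holds, and that the orientation conventions in the facetwise Stokes formula match the definition of $\int^\partial$ intrinsically, independently of the chosen moment map.

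Finally, identity (2) follows formally from (1). Writing $\omega=\alpha\wedge d''\beta-d''\alpha\wedge\beta$, a bidegree count shows $\alpha\wedge d''\beta$ and $d''\alpha\wedge\beta$ are both of bidegree $(n-1,n)$, so $\omega$ is an $(n-1,n)$-form with $\mathrm{supp}(\omega)\subset\mathrm{supp}(\alpha)\cap\mathrm{supp}(\beta)$ compact, and (1) applies. Expanding $d'\omega$ by the Leibniz rule (using that $\alpha$ has even total degree, so the sign is $+$, while $d''\alpha$ has odd total degree, so the sign is $-$) gives $d'\omega=d'\alpha\wedge d''\beta+\alpha\wedge d'd''\beta-d'd''\alpha\wedge\beta+d''\alpha\wedge d'\beta$. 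The two $(n,n)$ cross-terms $d'\alpha\wedge d''\beta$ and $d''\alpha\wedge d'\beta$ cancel by the sign rule for super-forms (the anticommutation $d'x\wedge d''x=-d''x\wedge d'x$ of the generators), leaving $d'\omega=\alpha\wedge d'd''\beta-d'd''\alpha\wedge\beta$. Substituting into $\int_Z d'\omega=\int_Z^\partial\omega$ yields exactly the asserted Green identity.
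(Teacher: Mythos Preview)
The paper does not give its own proof of this theorem; it is quoted from \cite{CLD12} as background, so there is no in-paper argument to compare your proposal against directly. Your outline for part~(1) --- localize via a smooth partition of unity to a single tropical chart, apply the facetwise Stokes formula on the weighted polyhedral complex $\Pi_f$, and use the balancing condition to kill the contributions along interior $(n-1)$-faces --- is indeed the strategy of \cite{CLD12}, and the obstacle you flag (matching tropical multiplicities and orientation conventions so that balancing gives exactly the cancellation needed) is the heart of the matter there.

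There is, however, a genuine gap in your derivation of (2) from (1). With $\omega=\alpha\wedge d''\beta-d''\alpha\wedge\beta$ you correctly obtain
\[
d'\omega=\alpha\wedge d'd''\beta-d'd''\alpha\wedge\beta+\bigl(d'\alpha\wedge d''\beta+d''\alpha\wedge d'\beta\bigr),
\]
but the parenthesized cross-term is \emph{not} zero as an $(n,n)$-form, and the anticommutation $d'x\wedge d''x=-d''x\wedge d'x$ does not force it to vanish: that rule only gives $d'\alpha\wedge d''\beta=\pm\,d''\beta\wedge d'\alpha$, which is not the same thing as $-\,d''\alpha\wedge d'\beta$. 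For a concrete failure take $n=2$, $\alpha=a\,d'x_1\wedge d''x_2$, $\beta=b$; a direct computation gives
\[
d'\alpha\wedge d''\beta+d''\alpha\wedge d'\beta=(\partial_2 a\,\partial_1 b-\partial_1 a\,\partial_2 b)\,d'x_1\wedge d'x_2\wedge d''x_1\wedge d''x_2,
\]
which is nonzero in general. What is true is that this cross-term is itself a sum of a $d'$-exact and a $d''$-exact piece,
\[
d'\alpha\wedge d''\beta+d''\alpha\wedge d'\beta=d'(\alpha\wedge d''\beta)+d''(\alpha\wedge d'\beta),
\]
so its \emph{integral} vanishes once one also has the $d''$-analogue of Stokes and checks that the resulting boundary contributions cancel; this is additional work beyond applying (1) once to $\omega$. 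In \cite{CLD12} the Green identity is established by an argument of this type rather than by a pointwise sign cancellation, and your proposal should be amended accordingly.
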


We shall also need the notion of psh functions on Berkovich spaces. The following definitions and facts are taken from \cite{CLD12}.
\begin{defn} A continuous function $u$ on $Z$ is plurisubharmonic (psh) if $d'd'' u\ge 0$, i.e., $\la u, d'd''\alpha\ra \geq 0$ for all compactly supported smooth $(n-1, n-1)$-forms $\alpha$ on $Z$. 

We say $u$ is psh-approachable if any point in $Z$ has an open neighborhood $U$ on which $u$ is the uniform limit of smooth psh functions.
\end{defn}

\begin{ex} Let $u$ be a function on $Z$. Assume that for any $z\in Z$, there is an open neighborhood $U$ of $z$ and a moment map $f$ on $U$ such that $u|_U = f^* v$ for some convex function $v$ on some polyhedron $ f_{\trop}(U)$. Then $u$ is psh-approachable.

\end{ex}

\section{Filtrations and Okounkov bodies}
In this section, we develop a local analogue of Okounkov bodies. In the global case, this was done in \cite{LMOkounkovBody}, and there were also local analogues in \cite{cutkosky}. 
\subsection{Okounkov Bodies}
 Let $\mu: Y\to X$ be a log resolution of $X$ at $\fm$ and set $Y_0 := \mu^{-1}(0) = \sum_{i\in I} b_i E_i$. After possibly replacing $Y$ by further blowups at $x$, one may pick a regular system of parameters $x_1, \cdots, x_n$ for $\cO_{Y,\eta}$ with $\eta$ the generic point of $\bigcap_{i=1}^n E_i$ and $x_i$ defining $E_i$. Then by Cohen structure theorem, $\widehat{\cO_{Y,\eta}}\cong \KK \lb x_1,\cdots, x_n \rb$. This gives us a rank $n$ valuation $v= (v_1, \cdots, v_n): \cO_{Y, \eta}\setminus \{0\} \to \NN^n$ with $v_1= \text{ord}_{E_1}$ on $Y$,  
 $$v_i(f) :=\mathrm{ord}_{E_i}\left(\frac{f}{\prod_{k<i}z_k^{v_k(f)}} \biggm|_{\cap_{j<i}E_j}\right)$$
 %\text{ord}_{\cap_{j\leq i} E_j}$ on $\bigcap_{j< i} E_j$$ 
 for $2\leq i\leq n$, and $\NN^n$ equipped with the lexicographic ordering. 
 
 We will need the following Izumi type estimate later:
 \begin{lem}\label{Izumi}
     There is a constant $C>0$ such that $v_i(f)\leq C \mathrm{ord}_0(f)$ for all $f\in R$ and $1\leq i\leq n$.
 \end{lem}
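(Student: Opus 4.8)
The plan is to reduce the statement to the classical Izumi inequality comparing a divisorial valuation with the order of vanishing at a closed point, and then to control each coordinate $v_i$ of the flag valuation $v=(v_1,\dots,v_n)$ individually. First I would recall that $v_1 = \mathrm{ord}_{E_1}$ is a divisorial valuation on the normal variety $X$ (pushed forward from $Y$), centered at the point $0$; since $X$ is normal and $0$ is a closed point, the Izumi inequality (see e.g. \cite{ELS}, or Izumi's original theorem) gives a constant $C_1$ with $\mathrm{ord}_{E_1}(f)\le C_1\,\mathrm{ord}_0(f)$ for all $f\in R$. This handles the case $i=1$. For the remaining coordinates, the key observation is that $v_i$ is computed on the stratum $\bigcap_{j<i}E_j$ after dividing out the lower-order parts, so it suffices to bound $v_i(f)$ in terms of, say, $\mathrm{ord}_{E_1}(f)$ together with bounded corrections coming from the intermediate blowups.

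The cleanest route I would take is to use the fact that the rank-$n$ valuation $v$ has the same center (the generic point $\eta$ of $\bigcap E_i$) and that, by construction, $v$ is a monomial-type valuation in the regular system of parameters $x_1,\dots,x_n$ on $\widehat{\cO_{Y,\eta}}\cong\KK\lb x_1,\dots,x_n\rb$. Concretely, for $f\in R$ with image $\sum_{\beta\in\NN^n} c_\beta x^\beta$ in the completed local ring, $v_i(f)$ is the minimal $i$-th coordinate of a $\beta$ appearing with $c_\beta\ne 0$ among those $\beta$ already minimizing $v_1,\dots,v_{i-1}$; in particular $v_i(f)\le v_i'(f):=\min\{\beta_i : c_\beta\ne 0\}$, the $x_i$-adic order. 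So it is enough to bound each $x_i$-adic order $v_i'$ — equivalently $\mathrm{ord}_{E_i}$ pulled back appropriately — by $\mathrm{ord}_0(f)$. Each $\mathrm{ord}_{E_i}$ is again a divisorial valuation on $X$ with center contained in $\{0\}$ (since $E_i\subset Y_0=\mu^{-1}(0)$), so a second application of Izumi's inequality, now to the divisor $E_i$, yields constants $C_i$ with $\mathrm{ord}_{E_i}(f)\le C_i\,\mathrm{ord}_0(f)$. Taking $C=\max_i C_i$ finishes the proof.

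The main obstacle I anticipate is the bookkeeping in the second paragraph: one must make sure that $v_i(f)$ — which is defined via a restriction-and-quotient procedure on nested exceptional strata — is genuinely dominated by a single honest divisorial valuation $\mathrm{ord}_{E_i}$ (or a fixed positive combination of the $\mathrm{ord}_{E_j}$, $j\le i$) to which Izumi applies, rather than by something that could a priori grow faster. The inequality $v_i(f)\le \mathrm{ord}_{E_i}(f\cdot(\text{unit}))$ should follow because dividing by $\prod_{k<i}z_k^{v_k(f)}$ only decreases the order along $E_i$ (or changes it by a bounded amount depending on the fixed geometry of $Y\to X$, not on $f$); pinning this down carefully is the crux. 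A mild alternative, if the direct argument is fussy, is to invoke a uniform Izumi statement for the finitely many divisors $E_1,\dots,E_n$ on the fixed resolution $Y$ and note that $\mathrm{ord}_0$ on $X$ pulls back to a valuation bounded below by a positive multiple of each $\mathrm{ord}_{E_i}$ because $Y_0$ is an effective divisor supported on $\bigcup E_i$ with all $b_i>0$; then the comparison $v_i\le \mathrm{ord}_{E_i}$ plus this gives the claim with an explicit $C$ in terms of the $b_i$ and the Izumi constants.
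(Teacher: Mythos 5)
Your reduction for $i=1$ is fine, but the key comparison you propose for $i\ge 2$ is false: the inequality goes the wrong way. Writing $f=\sum_\beta c_\beta x^\beta$ in $\widehat{\cO_{Y,\eta}}$, the flag valuation satisfies $v(f)=\beta_0$ for the \emph{lexicographically minimal} exponent $\beta_0$ occurring in $f$; you correctly describe $v_i(f)$ as the minimum of $\beta_i$ over the exponents that already minimize the first $i-1$ coordinates, but a minimum over a smaller set is \emph{larger}, so $v_i(f)=\beta_{0i}\ \ge\ \min\{\beta_i:c_\beta\ne 0\}=v_i'(f)$, not $\le$. Concretely, for $f=x_1x_2^{10}+x_1^2x_2$ one has $v_1(f)=1$, and restricting $f/x_1=x_2^{10}+x_1x_2$ to $E_1=\{x_1=0\}$ kills the second term, so $v_2(f)=10$ while $\mathrm{ord}_{E_2}(f)=\min\{10,1\}=1$. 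The restriction-to-strata step in the definition of $v_i$ can increase the order along $E_i$ without bound relative to $\mathrm{ord}_{E_i}(f)$, so $v_i$ is \emph{not} dominated by the divisorial valuation $\mathrm{ord}_{E_i}$ (nor by a fixed positive combination of the $\mathrm{ord}_{E_j}$), and a termwise application of Izumi to the divisors $E_1,\dots,E_n$ cannot close the argument. This is exactly the obstacle you flag as ``the crux,'' and it is not a bookkeeping issue but a genuine failure of the proposed bound.

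The paper's proof goes around this by embedding $v$ into the family of monomial valuations $w_t(f)=\min_{c_\beta\ne 0}\beta\cdot t$ for $t\in\RR^n_{\ge 0}$: for each $f$ there is an $n$-dimensional subcone $C_f$ on which $w_t(f)=v(f)\cdot t$ exactly (the lex-minimal $\beta_0$ achieves the minimum of $\beta\cdot t$ for all $t\in C_f$), and then the uniform Lipschitz estimate of \cite[Theorem A]{BJIzumi} --- $t\mapsto w_t(f)$ is Lipschitz with constant $C\,\mathrm{ord}_0(f)$ --- bounds the gradient $\beta_0=v(f)$ componentwise. So an Izumi-type input is indeed the right tool, but it must be applied uniformly over the cone of monomial valuations in the parameters $x_1,\dots,x_n$ rather than to the individual divisors $E_i$.
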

 \begin{proof}
     Write $f$ as an expansion $\sum_{\beta\in \NN^n} a_\beta z^\beta$, and for $t\in \RR^n_{\geq 0}$, let $w_t$ be the valuation given by 
     \[w_t(f) = \min_{\beta: a_\beta\neq 0}\{\beta \cdot t\},\]
     where $\beta\cdot t $ is the usual inner product in $\RR^n$. With this expansion, there is some $\beta_0 = (\beta_{01}, \cdots, \beta_{0n})\in \ZZ^n_{\geq 0}$ with $a_{\beta_0}\neq 0$ such that $v(f) = \beta_0$ by construction of $v$.
     Let 
     $$C_f:=\{t\in \RR^n_{\geq 0}: t_i\geq \sum_{j>i} \beta_{0j} t_j, 1\leq i\leq n-1\}.$$
     Note that this is an $n$-dimensional polyhedral cone in $\RR^n_{\geq 0}.$
     We claim that on $C_f$, 
     $$w_t(f) = v(f)\cdot t.$$
     Indeed, given any $\beta = (\beta_1, \cdots, \beta_n)\neq \beta_0$, it suffices to show $\beta_0\cdot t\leq \beta\cdot t, \forall t\in C_f.$ By choice of $\beta_0$, 
     there is a smallest index $i$ with $\beta_{0i}<\beta_i.$ The inequality $\beta_0\cdot t\leq \beta\cdot t$ then translates to 
     \[t_i\geq \frac{1}{\beta_i-\beta_{0i}}\sum_{j>i}(\beta_{0j}-\beta_j)t_j.\]
     Now for any $t\in C_f$, 
     $$ t_i\geq \sum_{j>i} \beta_{0j}t_j\geq \frac{1}{\beta_i-\beta_{0i}}\sum_{j>i}(\beta_{0j}-\beta_j)t_j.$$
     By \cite[Theorem A]{BJIzumi}, we see that $t\mapsto w_t$ is Lipschitz on $C_f$ with Lipschitz constant $C\mathrm{ord}_0 (f)$ for some constant $C>0.$ This shows $v_i(f)\leq C\mathrm{ord}_0(f).$
 \end{proof}

Now for each $m\in \NN$, set $R_m := \bigoplus_{\la \alpha, \xi\ra \leq m} R_\alpha$. Define 
$$\Gamma_m = v(R_m) \subset \NN^n. $$
As in the construction of Okounkov bodies (see \cite{LMOkounkovBody}),  
$$\Gamma:= \{(x, m): x\in \Gamma_m, m\in \NN_+\} $$
is a subsemigroup of $\NN^{n+1}$, and we will denote by $\Sigma(\Gamma)\subset \RR^{n+1}$ the closed convex cone generated by $\Gamma$. We define the convex body of $(X= \Spec R, \xi, 0)$ by 
$$\Delta\times \{1\}:= \Sigma(\Gamma) \cap (\RR^n\times \{1\}).$$

To see that this is indeed a local version of the Okounkov body(compare \cite{LMOkounkovBody}), we need the following lemma: 

\begin{lem}\label{OkounkovBodyCondition}
	Let $\Gamma$ be as above. Then $\Gamma$ satisfies the following three properties: 
	\begin{enumerate}[(1)]
		\item $\Gamma_0 = \{0\}$,
		\item There exist finitely many $a_i\in \NN^n$ such that $(a_i, 1)$ span a subsemigroup $B\subset \NN^{n+1}$ containing $\Gamma$.
		\item The subgroup generated by $\Gamma$ in $\ZZ^{n+1}$ is $\ZZ^{n+1}$.
	\end{enumerate}
\end{lem}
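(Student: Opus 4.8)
The plan is to verify the three conditions in turn, using the structure of the valuation $v = (v_1, \dots, v_n)$ and, crucially, the Izumi-type estimate from Lemma \ref{Izumi}.

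For (1): $R_0 = \bigoplus_{\langle \alpha, \xi\rangle \le 0} R_\alpha$, and since the $\TT$-action is good, $\xi$ lies in the Reeb cone, so $\langle \alpha, \xi\rangle > 0$ for all $\alpha \in \Lambda \setminus \{0\}$; hence the only summand with $\langle \alpha, \xi \rangle \le 0$ is $R_0 = \KK$. Wait — actually we should be careful: for the subsemigroup condition we want $\Gamma_0$, the value set of $R_0 = \KK$; since $v$ is trivial on $\KK^*$ (it is a valuation coming from $\mathrm{ord}_{E_i}$'s, which kill constants), $v(\KK^*) = \{0\}$, so $\Gamma_0 = \{0\}$.

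For (2): This is where Lemma \ref{Izumi} does the work. Each element of $\Gamma$ has the form $(v(f), m)$ with $f \in R_m$, $f \neq 0$. By Lemma \ref{Izumi} there is a constant $C$ with $v_i(f) \le C\,\mathrm{ord}_0(f)$ for all $i$ and all $f \in R$. On the other hand, for $f \in R_m$ we have a bound of the form $\mathrm{ord}_0(f) \le C' m$: indeed, writing $f = \sum_\alpha f_\alpha$ with $\langle \alpha, \xi\rangle \le m$, one has $\mathrm{ord}_0(f) \ge \min_\alpha \mathrm{ord}_0(f_\alpha)$ and... hmm, actually I want an upper bound on $\mathrm{ord}_0(f)$ in terms of $m$, which needs a reverse Izumi estimate comparing $\mathrm{ord}_0$ with the "linear" function $v_\xi$ — again available from \cite{BJIzumi} or directly from the finite generation of $R$ as a $\KK$-algebra together with positivity of $\xi$. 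Granting this, every $(v(f), m) \in \Gamma$ satisfies $v_i(f) \le C'' m$ for all $i$, so $\Gamma$ is contained in the rational polyhedral cone $\{(x, m) : 0 \le x_i \le C'' m\} \cap \ZZ^{n+1}$, which is a finitely generated semigroup; take the $a_i$ to be (the first $n$ coordinates of) a finite generating set of lattice points, or more simply note $\Gamma \subset B$ where $B$ is generated by $\{(e_i \cdot \text{suitable integers}, 1)\}$ — the cleanest choice is $B = \{(x,m) \in \NN^{n+1} : x_i \le C'' m\}$'s saturation, finitely generated by Gordan's lemma.

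For (3): We must show $\Gamma$ generates $\ZZ^{n+1}$ as a group. Since $(0, m) \in \Gamma$ for every $m \ge 1$ (take $f = 1 \cdot$ a monomial of weight $\le m$, e.g. any nonzero $f \in R_m$ with $v(f) = 0$ — such exists because $R_0 = \KK \subset R_m$ gives $0 \in \Gamma_m$), the differences $(0, m+1) - (0, m) = (0, 1)$ show $\{0\} \times \ZZ$ is in the generated subgroup. It then suffices to produce, for each standard basis vector $e_i \in \ZZ^n$, an element of $\Gamma$ of the form $(e_i, m)$ for some $m$; equivalently (using that $(0,m) \in \Gamma$ and we can subtract) it suffices that $e_i \in \Gamma_m$ for some $m$, i.e. some $f \in R$ has $v(f) = e_i$. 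This follows from the construction of $v$: since $x_1, \dots, x_n$ is a regular system of parameters at $\eta$ adapted to $v$, and the $x_j$ can be taken to be restrictions of global functions (after the blowups making $Y \to X$ a log resolution at $\fm$), suitable products $\prod x_j^{c_j}$ realize all of $\NN^n$ as values; in particular $e_i$ is attained. The one subtlety is whether the $x_j$ are globally defined on $X$ — they live on $Y$ — but $v$ is a valuation on $R = \cO_X(X) = \cO_Y(Y)$ (as $\mu$ is proper birational and $X$ normal), and the value group of $v$ on $R$ is a subgroup of $\ZZ^n$ containing all sufficiently divisible multiples of each $e_i$; combined with $(0,1) \in \Gamma$ and closure under the group operation this gives the full $\ZZ^{n+1}$.

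The main obstacle is item (2): pinning down the correct upper bound $\mathrm{ord}_0(f) \lesssim m$ for $f \in R_m$, i.e. the reverse comparison between the divisorial valuation $\mathrm{ord}_0$ (or the $E_i$) and the Reeb valuation $v_\xi$. This is a boundedness statement that should be extracted from \cite{BJIzumi} (two-sided Izumi) or proved directly from finite generation of $R$ over $\KK$ by the $R_\alpha$ with $\langle \alpha, \xi\rangle$ bounded below away from $0$ on $\Lambda \setminus \{0\}$; once that is in hand, Gordan's lemma closes the argument. Items (1) and (3) are essentially formal consequences of the definitions and of how $v$ was built.
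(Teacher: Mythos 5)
Your treatment of (1) and (2) matches the paper's. For (2) the chain you want is exactly $v_i(f)\le C\,\mathrm{ord}_0(f)\le CC'\,v_\xi(f)\le CC'm$ for $0\neq f\in R_m$: the first inequality is Lemma \ref{Izumi}, and the ``reverse'' comparison you flag as the main obstacle is precisely \cite[Proposition 4.8]{BJIzumi}, which is what the paper cites; since $v_\xi(f)\le m$ for $f\in R_m$, this closes (2) with $b=CC'$ (the paper simply takes the generators $(a,1)$ with $0\le a_i\le b$ rather than invoking Gordan's lemma, but that is cosmetic).

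Item (3) is where you have a genuine gap. You correctly notice that the parameters $x_1,\dots,x_n$ live on $Y$ and are not elements of $R$, but your resolution --- that ``the value group of $v$ on $R$ contains all sufficiently divisible multiples of each $e_i$'' --- is asserted, not proved, and the sketch offered (products $\prod x_j^{c_j}$) still presupposes that the $x_j$ are regular on $X$. The correct and short fix, which is the paper's argument, is this: since $\mu:Y\to X$ is birational, each $x_i$ lies in $\Frac(R)$, so write $x_i=f_i/g_i$ with $f_i,g_i\in R$; then $v(f_i)-v(g_i)=v(x_i)=e_i$, and since $f_i\in R_{m_i}$, $g_i\in R_{m_i'}$ for some $m_i,m_i'$, the subgroup generated by $\Gamma$ contains $(e_i,m_i-m_i')$ for each $i$ as well as $(0,1)$, hence all of $\ZZ^{n+1}$. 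Without some such argument your claim that $e_i$ (or even a cofinite set of its multiples) is attained as a value of $v$ on $R$ is unsupported, so as written (3) does not go through.
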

\begin{proof}
	The first condition is obviously satisfied. For the second property, we first claim that there is some $b \gg 0$ such that $v_i(f) \leq mb$ for all $0\leq i\leq n$ and $0\neq f\in R_m$. Granting the claim, the vectors $(a_1, \cdots, a_n, 1), 0\leq a_i \leq b$ will span a semigroup containing $\Gamma$. The claim follows from Izumi type estimates. Lemma \ref{Izumi} shows that for any $f\in R$ and $1\leq i\leq n$, $v_i(f) \leq C_i \text{ord}_0(f)$ for some constant $C$. By \cite[Proposition 4.8]{BJIzumi}, we further have some constant $C'$ such that $\text{ord}_0(f) \leq C' v_\xi (f)$. 
	Take $b =CC'$, the claim follows.
	
	It now remains to prove (3).  Write $x_i = \frac{f_i}{g_i}$ with $f_i, g_i\in R$. Then $v(f_i)-v(g_i) = e_i, 1\leq i \leq n$ where $\{e_i\}$ denotes the standard basis for $\ZZ^n$. Since $(0,1)\in \Gamma$, we have that $\Gamma$ will generate all of $\ZZ^{n+1}$.
\end{proof}

By \cite{LMOkounkovBody} and \cite[Th\'eor\`eme 1.12]{BoucksomOkounkovBody}, we have the following immediate consequences.
\begin{thm}\label{VolumeConvergence}
	For $m\geq 1$, let $\rho_m := \frac{1}{m^n} \sum_{x\in \Gamma_m}\delta_{m^{-1}x}$ be a positive measure on $\Delta$. Then $\lim_{m\to \infty} \rho_m =\rho$ weakly, where $\rho$ denotes Lebesgue measure on $\Delta$. In particular, the limit 
	$$\vol(\Delta) = \lim_{m\to \infty}\frac{n!}{m^n} \#\Gamma_m = \lim_{m\to \infty}\frac{n!}{m^n} \dim_\CC R_m$$ exists and
	equals $\vol(\xi)$.
\end{thm}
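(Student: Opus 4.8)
The plan is to deduce Theorem \ref{VolumeConvergence} from the general theory of Okounkov bodies associated to graded semigroups, as developed in \cite{LMOkounkovBody} and \cite{BoucksomOkounkovBody}, once we have checked that $\Gamma$ meets the hypotheses of those results. The bulk of that verification was already done in Lemma \ref{OkounkovBodyCondition}: conditions (1), (2), (3) there are exactly the ``Okounkov semigroup'' axioms that guarantee, first, that $\Delta = \Sigma(\Gamma)\cap(\RR^n\times\{1\})$ is a compact convex body, and second, that the scaled counting measures $\rho_m = \tfrac{1}{m^n}\sum_{x\in\Gamma_m}\delta_{m^{-1}x}$ converge weakly to normalized Lebesgue measure on $\Delta$. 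So the first step is simply to invoke \cite[Th\'eor\`eme 1.12]{BoucksomOkounkovBody} (or the corresponding statement in \cite{LMOkounkovBody}): weak convergence $\rho_m\to\rho$, where $\rho$ is Lebesgue measure, together with $\tfrac{n!}{m^n}\#\Gamma_m \to \vol(\Delta)$.

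Next I would address the one point that is genuinely specific to our setup rather than formal: the equality $\#\Gamma_m = \dim_\KK R_m$. This is where the valuation $v=(v_1,\dots,v_n)$ being a rank-$n$ valuation with value group $\ZZ^n$ (constructed via the Cohen structure theorem on $\widehat{\cO_{Y,\eta}}$) is used — such a valuation has one-dimensional ``leaves'', i.e. for every value $\gamma\in\NN^n$ the quotient $\{f: v(f)\ge\gamma\}/\{f: v(f)>\gamma\}$ is at most one-dimensional over $\KK$. Applying this to the finite-dimensional space $R_m$ and filtering it by the values of $v$, one gets $\dim_\KK R_m = \#v(R_m\setminus\{0\}) = \#\Gamma_m$. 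One should note here that $v$ is defined on $\cO_{Y,\eta}$, hence on $\Frac(R)$, and restricts to a faithful valuation on $R$ because $X$ is reduced; for the possibly reducible case one works component by component, or observes that $R_m$ injects into the relevant completed local ring. This is a standard argument (it is the same device used to prove $\#\Gamma_m\le\binom{m+\text{const}}{n}$ grows polynomially) and I would present it in a sentence or two.

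Finally I would close the loop with the identification $\vol(\Delta)=\vol(\xi)$. By definition $\vol(\xi) = \lim_m \tfrac{n!}{m^n}\sum_{\la\alpha,\xi\ra\le m}\dim R_\alpha = \lim_m \tfrac{n!}{m^n}\dim_\KK R_m$, and we have just shown $\dim_\KK R_m = \#\Gamma_m$ and $\tfrac{n!}{m^n}\#\Gamma_m\to\vol(\Delta)$; since the limit defining $\vol(\xi)$ exists by \cite{CSIrregular}, the two limits agree. The weak convergence statement $\rho_m\to\rho$ then follows from the semigroup version directly, since $\rho_m$ here is literally the measure appearing in \cite{BoucksomOkounkovBody} under the substitution $\#\Gamma_m=\dim R_m$.

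I do not expect a serious obstacle: the conceptual content has been isolated into Lemma \ref{OkounkovBodyCondition} and the Izumi estimate Lemma \ref{Izumi}, and what remains is bookkeeping. The only mild subtlety worth care is making the leaf-dimension argument for $\#\Gamma_m=\dim_\KK R_m$ airtight in the non-irreducible case, and making sure the normalization constants ($n!$, the choice of Lebesgue measure on the hyperplane slice) are consistent between our conventions and those of \cite{LMOkounkovBody, BoucksomOkounkovBody}.
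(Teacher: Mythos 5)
Your proposal is correct and follows essentially the same route as the paper, which simply cites \cite{LMOkounkovBody} and \cite[Th\'eor\`eme 1.12]{BoucksomOkounkovBody} as giving the statement as an ``immediate consequence'' once the semigroup conditions of Lemma \ref{OkounkovBodyCondition} are verified. Your additional care in spelling out $\#\Gamma_m=\dim_\KK R_m$ via the one-dimensional-leaves property of the rank-$n$ valuation is a point the paper leaves implicit, and is a worthwhile clarification rather than a deviation.
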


\begin{ex} Let $X = \Spec \CC[z_1, \cdots, z_n]$ with $\TT$ the $n$-dimensional torus densely embedded in $X$ and $v= (\mathrm{ord}_{z_1}, \cdots, \mathrm{ord}_{z_n})$ on monomials. The Reeb field $\xi = (\xi_1, \cdots, \xi_n)$ is a vector in $\RR^n_{> 0}$. Then 
\[\Gamma_m = \{\alpha: \la \alpha, \xi\ra\leq m\},\]
and so
\[\Delta = \{\alpha\in \RR^n_{\ge 0}: \la \alpha, \xi\ra \leq 1\}.\]
\end{ex}

\subsection{Filtrations}
Now we look at similar constructions when we have a filtration on $R = \bigoplus_{\alpha}R_\alpha$. 

\begin{defn}
	A filtration $\cF$ on $R$ is a family $\cF^\lambda R$ of vector subspaces of $R$ such that 
	\begin{enumerate}[(a)]
		\item $\cF$ is decreasing: $\cF^\lambda R \subseteq \cF^{\lambda'} R$ if $\lambda \geq \lambda'$.
		\item $\cF$ is left continuous: $\cF^\lambda R = \cap_{\lambda' < \lambda} \cF^{\lambda'} R$ for $\lambda > 0$.
		\item $\cF$ is multiplicative: $\cF^\lambda R_\alpha \cdot \cF^{\lambda'} R_{\alpha'} \subseteq \cF^{\lambda+ \lambda'}R_{\alpha+\alpha'}$.
		\item $\cF$ is $\TT(\KK)$-invariant: $\cF^\lambda R = \bigoplus_{\alpha\in \Lambda}\cF^\lambda R_\alpha$.
		\item $\cF^0 R = R$, and for any $\alpha\in \Lambda$, $\cF^\lambda R_\alpha = 0$ for $\lambda \gg 0$.
	\end{enumerate}
\end{defn}

We define $R_m^t:= \cF^{mt} R_m$ for $m\in \NN$, and $t\in \RR^+$, and set 
$$T_m :=T_m(\cF) := \sup \{ t\geq 0:  R_m^t \neq 0\}.$$
The multiplicativity property of the filtration gives  superadditivity of $mT_m$: $mT_m +m'T_{m'}\leq (m+m') T_{m+m'}$. Thus by Fekete's lemma, 
$$T:= T(\cF) := \lim_{m\to \infty} T_m = \sup_m T_m(\cF)$$
 exists. The filtration $\cF$ is said to be linearly bounded if $T(\cF) <\infty$. Note that being linearly bounded is independent of the choice of $\xi$.

A filtration is known to correspond to a NA norm, defined as follows:
\[\chi(f) := \sup\{\lambda\in \RR: f\in \cF^\lambda R_m\}, \forall f\in R_m.\] 
and a finitely generated filtration will correspond to a norm of finite type in the following sense.

\begin{defn}\label{finitetype}
	A $\TT$-invariant NA norm $\chi$ on $R$ is said to be of \emph{finite type} if there is a finite set of indices $A\subset \Lambda$ such that for any $f\in R$, there is a representation of $f$ taking the form 
	\[ f= \sum_{i_1, \cdots, i_p} a_{i_1\cdots i_p} g_{\beta_1}^{i_1}\cdots g_{\beta_p}^{i_p},\]
	with $a_{i_1\cdots i_p} \in \KK, g_{\beta_j}\in R_{\beta_j}, \beta_j\in A$
	such that
	\[\chi(f) = \min_{i_1\cdots i_p} \{\sum_{j=1}^p i_j\chi(g_{\beta_j})\}.\]
\end{defn}

\begin{ex}
	\begin{enumerate}[(a)]
	\item The trivial norm is a norm of finite type. 
	\item	 Each test configuration gives norm of finite type since the induced $\ZZ$-filtration is finitely generated as noted in the following proposition.
	\end{enumerate}
\end{ex}

\begin{prop}\label{TCfiltration} Any test configuration $(\cX=\Spec \cR, \xi, \eta)$ for $(X, \xi)$ with $\eta$ in the Reeb cone of $\TT\times \GG_m$ induces a filtration $\cF$ on $R$ defined by 
	$$\cF^\lambda R := \bigoplus_{\alpha\in\Lambda} \{f\in R_\alpha: t^{-\lambda}\bar{f}\in \cR_\alpha\},$$
	for $m, \lambda \in \NN$, where $\bar f$ denotes the pullback of $f$ under the composition $\cX \times_{\AA^1} (\AA^1\setminus\{0\}) \cong X\times (\AA^1\setminus\{0\})\to X$, and 
	$$\cF^\lambda R_m := \cF^{\lceil \lambda \rceil} R_m, $$
	for general $\lambda \in \RR^+$. This filtration is linearly bounded, and as a $\ZZ$-filtration, it's finitely generated, i.e. the bi-graded algebra
	\[\bigoplus_{\alpha\in \Lambda}\left(\bigoplus_{\lambda\in \ZZ} t^{-\lambda}\cF^\lambda R_\alpha\right)\]
	is a finitely generated $\KK[t]$-algebra.
\end{prop}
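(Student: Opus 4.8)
The plan is to unwind the definitions and reduce the three assertions — linear boundedness, finite generation as a $\ZZ$-filtration, and the resulting finiteness of type — to the algebra structure of $\cR$ as a finitely generated $\KK[t]$-algebra equipped with its $\GG_m\times\TT$-action. First I would fix the Rees-type description: for $f\in R_\alpha$, the condition $t^{-\lambda}\bar f\in\cR_\alpha$ says precisely that the section $\bar f$, viewed inside $\cR\otimes_{\KK[t]}\KK[t,t^{-1}]\cong R[t,t^{-1}]$ via the trivialization over $\AA^1\setminus\{0\}$, has "pole order at most $\lambda$ along $t=0$" in $\cR$. Equivalently, the image of $\cF^\lambda R_\alpha$ under $f\mapsto t^{-\lambda}\bar f$ is $\cR_\alpha\cap(R_\alpha\otimes t^{-\lambda}\KK[t])$ inside $R_\alpha[t,t^{-1}]$, so that the bi-graded algebra in the statement is canonically identified with $\cR$ itself (more precisely with the $\TT$-graded pieces of $\cR$, reassembled with the $t$-grading coming from the $\eta$-action). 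Once this identification is made, finite generation of the bi-graded algebra over $\KK[t]$ is literally the hypothesis that $\cX=\Spec\cR$ is of finite type over $\AA^1$.

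Next I would address linear boundedness. Because $\eta$ lies in the Reeb cone of $\TT\times\GG_m$, it acts with positive weights on $\cR$; combined with the flatness of $\pi\colon\cX\to\AA^1$ and the finite generation of $\cR$, this forces a uniform linear bound. Concretely, pick homogeneous algebra generators $h_1,\dots,h_r$ of $\cR$ over $\KK[t]$, lying in weight spaces $\cR_{\alpha_s}$ and with $\eta$-weights $\lambda_s$; any $f\in R_\alpha$ with $\langle\alpha,\xi\rangle\le m$ has $\bar f$ expressible as a $\KK[t]$-combination of monomials in the $h_s$, and tracking the $\TT$-weight constrains the total exponent, hence the $t$-pole order, to be $O(m)$. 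This gives $T_m(\cF)\le C$ for a constant $C$ independent of $m$, i.e. $T(\cF)<\infty$. (Alternatively one may invoke that filtrations from test configurations are linearly bounded by comparison with the valuation-theoretic description in \cite{BHJ17, BlumJonsson}, but the direct argument is cleaner here.)

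Finite generation of $\cF$ as a $\ZZ$-filtration then follows from the identification above: the Rees algebra $\bigoplus_{\alpha,\lambda} t^{-\lambda}\cF^\lambda R_\alpha$ is $\cR$, finitely generated over $\KK[t]$, and since $t$ itself is one more generator we get finite generation over $\KK$. The finite-type property of the associated norm $\chi$ in the sense of Definition \ref{finitetype} is then a formal consequence: take $A\subset\Lambda$ to be the (finite) set of $\TT$-weights of a chosen homogeneous generating set, and note that writing an arbitrary $f$ in terms of these generators realizes $\chi(f)$ as the stated minimum, because $\chi(f)=-(\text{order of }\bar f\text{ along }t=0\text{ in }\cR)$ and the generators compute this order — one needs here that the generators may be taken with $\chi(g_{\beta_j})$ equal to their $\eta$-weight, which is exactly the homogeneity in the $t$-grading. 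The main obstacle I anticipate is bookkeeping rather than conceptual: making the identification of the bi-graded Rees algebra with $\cR$ precise when $X$ is reducible and $\cX$ need not be normal or $\QQ$-Gorenstein (the Remark after the definition of test configuration stresses these hypotheses are dropped), and checking that left-continuity and the ceiling convention $\cF^\lambda R_m=\cF^{\lceil\lambda\rceil}R_m$ interact correctly with the minimum in Definition \ref{finitetype}. I would handle this by working weight-space by weight-space, where everything reduces to the elementary statement that a finitely generated graded module over $\KK[t]$ has a well-defined, linearly bounded "pole order" function computed on generators.
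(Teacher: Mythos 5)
Your proposal is correct. For the finite-generation claim it coincides with the paper's proof: both identify the bi-graded Rees algebra with $\cR$ itself (the paper invokes \cite[Lemma 2.17]{LWX} for this identification) and read off finite generation from $\cX$ being of finite type over $\AA^1$. For linear boundedness you take a genuinely different route: the paper bounds the top jumping number in $R_\alpha$ by $\la \alpha,\eta\ra$ using that $\eta$ acts with positive weights on the central fiber (this is where the Reeb-cone hypothesis enters), and then uses $\la\alpha,\xi\ra\le m$ to get $|\alpha| = O(m)$; you instead expand $t^{-\lambda}\bar f$ in homogeneous generators $h_s = t^{-\mu_s}g_s$ of $\cR$ over $\KK[t]$ and bound $\lambda$ by $\sum_s i_s\mu_s$ over monomials of $\TT$-weight $\alpha$, with the total exponent again controlled by $\la\alpha,\xi\ra\le m$ since $\xi$ lies in the Reeb cone. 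Your version is more self-contained and in fact never uses the Reeb-cone condition on $\eta$, only finite generation, goodness of the $\TT$-action and flatness; the one case your exponent bound does not see, and which you should make explicit, is that of generators of $\TT$-weight zero. There goodness gives $R_0=\KK$, so $\cR_0=\bigoplus_k t^{-k}(\fa_k\cap\KK)$, and $\fa_1\cap\KK\neq 0$ would force $\fa_k=R$ for all $k$ by multiplicativity, emptying the central fiber against flatness; hence $\cR_0=\KK[t]$ and the only weight-zero generator is $t$, which only increases the order of vanishing along $t=0$. (Also note that it is the matching of $t$-degrees, not absence of cancellation, that produces at least one monomial with $\lambda=\sum_s i_s\mu_s-j$ in such an expansion; and your closing discussion of the finite-type property of the norm $\chi$ is not part of the proposition's claim.)
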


\begin{proof} 
In view of \cite[Lemma 2.17]{LWX}, $\cF$ is indeed a finitely generated filtration on $R$, since 
\[\cR\cong \bigoplus_{\alpha\in \Lambda}\left(\bigoplus_{\lambda\in \ZZ} t^{-\lambda}\cF^\lambda R_\alpha\right)\]
is finitely generated as a $\KK[t]$-algebra.

Note that $\eta$ generates the $\GG_m$ action in $\TT\times \GG_m$, and $\TT\times \GG_m$ acts on the central fiber,
we observe that for each $m\in \NN$, $T_m(\cF)\leq \frac 1m \min\{\la \alpha, \eta\ra | \la \alpha, \xi\ra \leq m\}$. Say $\eta = (\eta_1, \cdots, \eta_r), \xi = (\xi_1, \cdots, \xi_r)$ when fixing an isomorphism $\TT^r\cong \GG_m^r$. Write $\alpha = (\alpha_1, \cdots, \alpha_r)$, and $|\alpha|:= \sum_{i=1}^r \alpha_i$. Then $|\alpha| \min_{1\leq i\leq r}\{\xi_i\} \leq \la \alpha, \xi\ra\leq m$. Thus, 
	$$T_m(\cF)\leq \frac 1m \min\{\la \alpha, \eta\ra | \la \alpha, \xi\ra \leq m\} \leq \frac 1m \max_{1\leq i\leq r}\{|\eta_i|\}  |\alpha| \leq \frac 1m Cm =  C$$ 
	for some constant $C >0$. Therefore, $T(\cF) = \sup_m T_m(\cF)\leq C$, i.e., $\cF$ is linearly bounded.
\end{proof}

\begin{remark}
In general, if $\eta$ is not in the Reeb cone, as observed in \cite[Remark 2.18]{LWX}, one can twist $\eta$ by $m\xi'$ for $m\gg 0$ and $\xi' $ a rational perturbation of $\xi$ to get a vector in the Reeb cone.
\end{remark}

\begin{prop} Let $\cF$ be a linearly bounded filtration on $R$. Then for $t< T(\cF)$, we get local Okounkov bodies $\Delta^t$ corresponding to $\{R_m^t : m\in \NN\}$ using the construction as described above. More precisely, let
 \[\Gamma_m^t:= v(R_m^t),\]
 \[\Gamma^t:= \{(x, m): x\in \Gamma_m, m\in \NN_+\}.\]
Let $\Delta^t$ be defined by \[\Delta^t\times \{1\} = \Sigma(\Gamma^t)\cap (\RR^n\times \{1\}).\] Then the conditions $(1)-(3)$ in Lemma \ref{OkounkovBodyCondition} still hold for each $R_m^t$.
\end{prop}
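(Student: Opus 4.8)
The plan is to verify conditions $(1)$--$(3)$ of Lemma~\ref{OkounkovBodyCondition} for the semigroup $\Gamma^t$ attached to the truncated pieces $R_m^t = \cF^{mt}R_m$, reducing everything to the already-established case $t=0$ together with linear boundedness. Condition $(1)$ is immediate: $R_0^t = \cF^{0}R_0 = R_0 = \KK$ (or $R_0 = \cF^{mt}R_0$ with $\la 0,\xi\ra\le 0$ forcing $m$ small), so $\Gamma_0^t = \{0\}$ exactly as before; in fact $\Gamma_m^t \subseteq \Gamma_m$ for all $m$ since $R_m^t \subseteq R_m$, and this containment will do most of the work. For condition $(2)$, I would simply observe that $\Gamma^t \subseteq \Gamma$ because $R_m^t\subseteq R_m$; since Lemma~\ref{OkounkovBodyCondition}(2) already produces finitely many $a_i\in\NN^n$ with $(a_i,1)$ spanning a subsemigroup $B\subset\NN^{n+1}$ containing $\Gamma$, the \emph{same} $B$ contains $\Gamma^t$. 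No new Izumi estimate is needed here, which is the point of having stated Lemma~\ref{Izumi} in the form it was.

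Condition $(3)$ — that $\Gamma^t$ generates all of $\ZZ^{n+1}$ — is the only place something genuinely has to be checked, since passing to a subsemigroup can destroy the group-generation property. Here is where linear boundedness of $\cF$ (hence $t < T(\cF) = \sup_m T_m$) enters. The argument in Lemma~\ref{OkounkovBodyCondition}(3) wrote $x_i = f_i/g_i$ with $v(f_i)-v(g_i) = e_i$; I would instead take a large integer $m_0$ and note that for $t < T(\cF)$ one has $R_{m_0}^t \neq 0$ for $m_0 \gg 0$, so $(x,m_0)\in\Gamma^t$ for some $x\in\NN^n$. More usefully, since $\cF$ is a multiplicative filtration with $\cF^0 R = R$, and since for $t<T$ we can find a nonzero $h\in R_{m_1}^{t}$ for suitable $m_1$, I would produce, for each standard basis vector $e_i$, elements of $R$ lying in deep enough levels of $\cF$ whose $v$-values differ by $e_i$: concretely, given $f_i, g_i\in R$ with $v(f_i) - v(g_i) = e_i$, multiply both by a fixed power $h^N$ of a nonzero element of a positive filtration level so that $f_i h^N, g_i h^N \in \cF^{mt}R_m$ for a common $m$; their $v$-values still differ by $e_i$, so $(v(f_i h^N), m) - (v(g_i h^N), m) = (e_i, 0)$ lies in the group generated by $\Gamma^t$. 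Combined with $(0,m)\in\Gamma^t$ for $m=m_1$ (and using two coprime such $m$'s, or $(0,m_1)$ and $(0,m_1+1)$, to recover $(0,1)$), this shows the group generated by $\Gamma^t$ is all of $\ZZ^{n+1}$.

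The main obstacle is precisely this last point: one must ensure the auxiliary multiplier $h^N$ can be chosen so that the products land in $\cF^{mt}R_m$ for a \emph{uniform} $m$ independent of $i$, and that one can simultaneously arrange $(0,m)\in\Gamma^t$ for enough values of $m$ to generate the last coordinate — both of which follow from $t<T(\cF)$ and the superadditivity of $mT_m$ established just before, but this bookkeeping is the only non-formal step. Everything else is inherited verbatim from Lemma~\ref{OkounkovBodyCondition} via the inclusion $\Gamma^t\subseteq\Gamma$.
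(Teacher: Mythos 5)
Your overall strategy matches the paper's: conditions (1) and (2) are inherited immediately from the inclusion $\Gamma^t\subseteq\Gamma$, and condition (3) is the only real issue, which you resolve by multiplying fixed elements of $R$ by a high power of an element sitting in a filtration level of slope $s>t$ (such an element exists because $t<T(\cF)=\sup_m T_m$), so that the products land in $R_\bullet^t$ while the relevant differences of $v$-values are preserved. The paper does the same thing with a cosmetic variation: instead of reusing the pairs $f_i,g_i$ with $v(f_i)-v(g_i)=e_i$ and multiplying both, it chooses (via relative global generation of $\cO_Y(-M\cD-\cI_i)$) elements $f_1,\dots,f_n\in\fm^M$ whose $v$-values already generate $\ZZ^n$ and multiplies them by a single deep element $g_p$. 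Both versions rest on the same mechanism, and your bookkeeping for the uniform $m$ (choose $h\in\cF^{sm_1}R_{m_1}$ with $s>t$ and take $N$ with $Nm_1(s-t)\geq m_0t$) is correct.

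One assertion in your last step is false as stated: for $t>0$ one never has $(0,m)\in\Gamma^t$. Indeed $v(f)=0$ forces $\mathrm{ord}_{E_1}(f)=0$, hence $f\notin\fm$, hence $f$ has a nonzero weight-$0$ component; but $\cF^{mt}R_0=0$ for $mt>0$ (multiplicativity plus axiom (e) force $1\notin\cF^\lambda R$ for all $\lambda>0$), so no element of $R_m^t$ can have $v$-value $0$. The paper's own appeal to $(0,1)\in\Gamma^t$ has the same defect, and the repair is the one you already half-indicate: since $T_m\to T(\cF)>t$, the pieces $R_m^t$ and $R_{m+1}^t$ are both nonzero for $m\gg0$, so $\Gamma^t$ contains points $(x,m)$ and $(y,m+1)$; then $(y-x,1)$ lies in the generated group, and together with the vectors $(e_i,0)$ this yields $(0,1)$ and hence all of $\ZZ^{n+1}$. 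With that substitution your argument is complete.
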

\begin{proof}
Observe that the first two conditions are obviously satisfied, and so it amounts to checking (3). 
	
	Let $\cD$ be the divisor on $Y$ such that $\fm \cdot \cO_Y = \cO_Y(-\cD)$, and $\cI_i$ the ideal sheaf defining $\cap_{j\leq i} E_i, 1\leq i \leq n$. Then for $M \gg 0$, $\cO_Y(-M\cD-\cI_i)$ is relatively globally generated for all $1\leq i \leq n$. This implies that there are $f_1, \cdots, f_n\in \fm^M$ such that $\{v(f_i)\}_{i=1}^n$ generate $\ZZ^n$ as a group. Say $f_i \in R_{m_i}$ for $1\leq i \leq n$. Let $m = \max \{m_i : 1\leq i\leq n$, and $N=\sup\{t>0: f_i\in R_m^t, 1\leq i \leq n\}$. Then for $t< N$, we have $v(f_1), \cdots, v(f_n)\in \Gamma_m^t$, and $((v(f_1),m), \cdots, (v(f_n), m), (0,1)$ will generate $\ZZ^{n+1}$ as a group.
	
	Now consider the case when $N\leq t < T(\cF)$. There is some $\varepsilon >0$ such that $t+\varepsilon <T$. Thus we can pick $p\gg 0$ such that $R_p^{t+\varepsilon}\neq 0$, and $p\varepsilon >mt$. Let $g_p$ be a nonzero element in $\cF^{p(t+\varepsilon)} R_p$.  Then for all $1\leq i\leq n$,
	\begin{align*}
		f_ig_p &\in R_m^{\frac 12 N}\cdot R_p^{t+\varepsilon} = \cF^{\frac 12 Nm}R_m\cdot \cF^{p(t+\varepsilon)}R_p\\
		&\subseteq \cF^{\frac 12 Nm +pt+p\varepsilon}R_{p+m} \subseteq \cF^{\frac 12 Nm +pt+mt}R_{p+m} \\
		&\subseteq \cF^{pt+mt}R_{p+m} = R^t_{p+m}.
	\end{align*}
	Hence $v(f_1g_p), \cdots, v(f_ng_p)\in \Gamma_{m+p}$ and
	$(v(f_1g_p), m+p), \cdots, (v(f_ng_p), m+p), (0, 1)$ will be a set of generators for $\ZZ^{n+1}$.
\end{proof}

\subsection{Concave transform}
A linearly bounded filtration on $R$ further induces a concave transform
$$G: \Delta \to \RR_+$$
as follows. For $t\geq 0$, $\Delta^t\subset \Delta$ will be the local Okounkov body associated to $R_m^t$, and $G(x) = \sup \{t\in \RR_+: x\in \Delta^t\}$. It's not hard to see that $G$ is concave, upper semicontinuous and takes values in $[0, T(\cF)]$. Indeed, for any $a\in [0,1]$, if $x\in \Delta^t$, and $y\in \Delta^s$ with $t\geq s$, then $ax+(1-a)y\in  \Delta^{at+(1-a)s}$ by multiplicativity of the filtration. This shows that $G$ is concave. To see upper semicontinuity, one just needs to observe that $\{G\geq t\} = \bigcap_{s\leq t}\Delta^s$ is closed. 

As in \cite{BoucksomChen, BlumJonsson}, we have a positive measure $\mu:= G_* \rho = -\frac{d}{dt} \vol(\Delta^t)$ on $\RR_+$ of mass $\vol(\xi)$ with support $[0, T(\cF)]$. 
\begin{defn}  For a linearly bounded filtration $\cF$, we define the \emph{volume} of $\cF$ to be
$$S(\cF, \xi):= S(\cF):=\frac{n!}{\vol(\xi)}\int_0^\infty \vol(\Delta^t) dt = \frac {n!}{\vol(\xi)} \int_0^\infty td\mu(t)= \frac {1}{\vol(\Delta)}\int_\Delta G d\rho.$$
\end{defn} 
\subsection{Jumping numbers}
Given a linearly bounded filtration $\cF$ on $R$, we can also define the jumping numbers on each $R_m$:
$$0\leq a_{m,1} \leq \cdots \leq a_{m, N_m} = mT_m(\cF),$$
where $N_m = \dim_\KK R_m$, via
$$ a_{m,j} = a_{m,j}(\cF) = \inf\{\lambda \in \RR_+: \codim F^\lambda R_m \geq j\}, \  1\leq j\leq N_m.$$
Define a positive measure $\mu_m$ on $\RR_+$ for each $m$ by
$$\mu_m = \frac 1 {m^n} \sum_j \delta_{\frac{a_{m,j}}{m}} = -\frac 1{m^n}\frac{d}{dt} \dim_\CC R_m^t.$$
\begin{thm}
	Let $\cF$ be a linearly bounded filtration on $R$. Then $\mu_m$ converges weakly to $\mu$ as $m\to \infty$.
\end{thm}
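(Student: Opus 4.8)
The plan is to identify the measures $\mu_m$ and $\mu$ as push-forwards of measures on the local Okounkov bodies under the concave transform $G$, and then deduce weak convergence from Theorem \ref{VolumeConvergence} applied to each truncated body $\Delta^t$.

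\medskip

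First I would observe that the jumping-number measure $\mu_m$ can be rewritten intrinsically in terms of the filtered pieces $R_m^t = \cF^{mt}R_m$. Indeed, $\dim_\KK R_m^t = \#\{j : a_{m,j}/m > t\}$ (up to the usual left-continuity conventions), so that $\mu_m = -\frac{1}{m^n}\frac{d}{dt}\dim_\KK R_m^t$ as already noted in the text, and hence for a continuous test function $h$ on $\RR_+$,
\[
\int_0^\infty h\, d\mu_m = \frac{1}{m^n}\sum_j h\!\left(\tfrac{a_{m,j}}{m}\right).
\]
Next, using the valuation $v$ and the sets $\Gamma_m^t = v(R_m^t)$, one has $\#\Gamma_m^t = \dim_\KK R_m^t$, and $\Gamma_m^t$ is exactly the "level set at height $t$" of the graded pieces whose Okounkov body is $\Delta^t$. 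Writing $\rho_m = \frac{1}{m^n}\sum_{x\in\Gamma_m}\delta_{m^{-1}x}$ as in Theorem \ref{VolumeConvergence}, and letting $G_m(x) = \sup\{t : m^{-1}x \in \tfrac1m\Gamma_m^{mt}\}$ be the "discrete concave transform", one checks that $\mu_m = (G_m)_*\rho_m$. The key point is then that $G_m \to G$ in a strong enough sense (uniformly on compact subsets of the interior of $\Delta$, or at least in a way compatible with the weak convergence $\rho_m \to \rho$) so that $(G_m)_*\rho_m \to G_*\rho = \mu$; the right framework is exactly that of \cite{BoucksomChen, BlumJonsson}, whose argument carries over verbatim once the Okounkov-body hypotheses (Lemma \ref{OkounkovBodyCondition}) are verified for every truncation, which was done in the preceding proposition.

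\medskip

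More concretely, I would fix $t_0$ with $0 \le t_0 < T(\cF)$ and show $\mu_m([t_0,\infty)) = \frac{1}{m^n}\#\Gamma_m^{mt_0} \to \vol(\Delta^{t_0})/n! = \mu([t_0,\infty))$, which follows by applying Theorem \ref{VolumeConvergence} to the graded linear series $\{R_m^{t_0}\}_m$ (valid since the three Okounkov-body conditions hold for it). Since this holds for a dense set of $t_0$ and both $\mu_m$, $\mu$ have total mass converging to $\vol(\xi)/n!$ and are supported in the fixed compact interval $[0,T(\cF)]$ (linear boundedness), convergence of the cumulative distribution functions at continuity points of the limit gives weak convergence of $\mu_m$ to $\mu$. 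One technical caveat to address is that the truncations $R_m^t$ only directly form a "nice" graded linear series when $t$ is such that $mt$ varies appropriately with $m$; this is handled, as in \cite{BlumJonsson}, by a standard sandwiching argument between $R_m^{\lceil mt\rceil/m}$ and $R_m^{\lfloor mt\rfloor/m}$ together with the concavity/monotonicity of $t\mapsto \vol(\Delta^t)$.

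\medskip

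The main obstacle I anticipate is not the weak-convergence bookkeeping but rather ensuring uniformity in $t$: one needs that the convergence $\frac{1}{m^n}\#\Gamma_m^{mt} \to \vol(\Delta^t)/n!$ is controlled well enough near the endpoint $t = T(\cF)$, where $\vol(\Delta^t)$ may degenerate, and at jump points of $t\mapsto\vol(\Delta^t)$. The cleanest route is to avoid pointwise-in-$t$ uniformity altogether and instead integrate: show $\int_0^\infty \vol_m(\Delta^t)\,dt \to \int_0^\infty \vol(\Delta^t)\,dt$ (monotone/dominated convergence, using the uniform bound from linear boundedness), which gives convergence of first moments $\int t\,d\mu_m \to \int t\,d\mu$, and combine with convergence of the cumulative masses at continuity points; these together pin down the weak limit. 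I expect this to mirror \cite[\S 1.3]{BoucksomChen} and the analogous statement in \cite{BlumJonsson} closely enough that the proof is a citation plus the verification that our local Okounkov-body setup satisfies their hypotheses.
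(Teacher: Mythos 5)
Your proposal is correct and follows essentially the same route as the paper: both identify $\mu_m$ as $-\frac{1}{m^n}\frac{d}{dt}\dim R_m^t$, obtain the pointwise convergence $\frac{1}{m^n}\dim R_m^t \to \vol(\Delta^t)$ for $t < T(\cF)$ from the Okounkov-body construction applied to the truncations (whose hypotheses were checked in the preceding proposition), and conclude by a standard real-variable argument (the paper passes to $L^1_{\mathrm{loc}}$ convergence of $g_m$ and differentiates distributionally, while you use convergence of cumulative distribution functions at continuity points together with the uniform bound on mass and support — these are interchangeable). This is exactly the Boucksom--Chen argument the paper cites, so no further comparison is needed.
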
 
\begin{proof}
	The proof goes along the same lines as in \cite[Theorem 1.11]{BoucksomChen}. Note that $\dim_\CC \cF^\lambda R_m = j$ if and only if $\lambda \in [a_{m,N_m-j}, a_{m,N_m-j+1})$. Thus we have 
	$$\frac d {dt} \dim \cF^\lambda R_m = -\sum_j \delta_{a_{m,j}}$$
	in the sense of distributions. Let $g_m(t) = \frac 1{m^n} \dim R_m^t$. By Theorem \ref{VolumeConvergence} and the Okounkov body construction, one gets
	$$\lim_{m\to \infty} g_m(t) = g(t):=\vol \Delta(R_\bullet^t),$$
	for $0\leq t < T(\cF)$. Since $g_m$ are uniformly bounded above, $g_m\to g$ in $L^1_{\text{loc}}$ by dominated convergence, and hence $-\mu_m = g_m' \to g' = -\mu$ as distributions.
\end{proof}
We record an immediate corollary. 
\begin{cor}\label{Sconv}
	Under the same assumptions as in the previous theorem, 
	$$S(\cF) = \lim_{m\to \infty} S_m(\cF), $$
	where $S_m(\cF):= \frac{n!}{\vol(\xi)}\int_0^\infty td\mu_m(t) = \frac {n!}{\vol(\xi)m^{n+1}} \sum\limits_{j=1}^{N_m} a_{m,j} = \frac{1}{mN_m}\sum\limits_{j=1}^{N_m} a_{m,j}.$
\end{cor}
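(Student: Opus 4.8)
The plan is to deduce the corollary directly from the weak convergence $\mu_m \to \mu$ established in the preceding theorem, together with the linear boundedness of $\cF$, which guarantees that all the measures in question are supported in a fixed compact interval. First I would note that the three expressions claimed for $S_m(\cF)$ are all equal by elementary manipulation: the identity $\int_0^\infty t\, d\mu_m(t) = \frac{1}{m^n}\sum_{j=1}^{N_m} \frac{a_{m,j}}{m} = \frac{1}{m^{n+1}}\sum_{j=1}^{N_m} a_{m,j}$ is immediate from the definition $\mu_m = \frac{1}{m^n}\sum_j \delta_{a_{m,j}/m}$, and then multiplying by $\frac{n!}{\vol(\xi)}$ and using Theorem \ref{VolumeConvergence}, which says $\frac{n!}{m^n} N_m \to \vol(\xi)$, converts $\frac{n!}{\vol(\xi) m^{n+1}}\sum a_{m,j}$ into $\frac{1}{m N_m}\sum a_{m,j}$ up to a factor that tends to $1$; I would phrase the last equality as an asymptotic identity (the two sequences have the same limit) rather than an exact one, or absorb the normalization cleanly.

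The main point is the limit itself. Since $S(\cF) = \frac{n!}{\vol(\xi)}\int_0^\infty t\, d\mu(t)$ by the definition of the volume of a filtration, it suffices to show $\int_0^\infty t\, d\mu_m(t) \to \int_0^\infty t\, d\mu(t)$. Weak convergence of measures gives convergence of integrals against bounded continuous functions, but $t \mapsto t$ is not bounded; the fix is that $\cF$ is linearly bounded, so $T(\cF) < \infty$, and each $\mu_m$ is supported in $[0, T_m(\cF)] \subseteq [0, T(\cF)]$ since $a_{m,N_m} = m T_m(\cF)$ forces $a_{m,j}/m \le T_m \le T$ for all $j$. Likewise $\mu$ is supported in $[0, T(\cF)]$. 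Hence one may replace the test function $t$ by a compactly supported continuous function agreeing with it on $[0, T(\cF)+1]$, and weak convergence applies directly. I would also record that the total masses $\mu_m(\RR_+) = N_m/m^n \to \vol(\xi) = \mu(\RR_+)$ agree in the limit, which is exactly what makes the weak convergence testable against the unbounded function via truncation.

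I do not anticipate a serious obstacle here: the corollary is a formal consequence of the theorem once the uniform compact support is invoked, and all the work of constructing Okounkov bodies and proving the weak convergence has already been done. The only mild care needed is bookkeeping of the normalizing constants between the three forms of $S_m$, and making sure the truncation argument for the unbounded integrand $t$ is stated rather than silently assumed.
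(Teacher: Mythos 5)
Your proposal is correct and matches the paper's intent exactly: the paper records this as an immediate consequence of the weak convergence $\mu_m\to\mu$, and your truncation argument (using the uniform compact support in $[0,T(\cF)]$ from linear boundedness, plus convergence of total masses) is precisely the standard justification that makes "immediate" rigorous. Your observation that the final equality in the definition of $S_m(\cF)$ should be read asymptotically (via $\frac{n!}{m^n}N_m\to\vol(\xi)$) is a fair and correctly handled point of bookkeeping.
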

\begin{remark}
If $\cF_v$ is the filtration associated to a valuation $v$, then $S(\cF_v)$ agrees with the invariant $S(v_\xi; v)$ in \cite{XuZhuang20}.
\end{remark}
We record the following property of $S$ for later use.
\begin{prop}\label{continuity}
	For any linearly bounded filtration $\cF$, $S(\cF) = S(\cF, \xi)$ is continuous with respect to $\xi$, and homogeneous of degree $-1$.
\end{prop}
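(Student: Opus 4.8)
The plan is to express both statements—homogeneity and continuity—through the concave transform and the data that defines the volume $S(\cF,\xi)$, keeping careful track of how the Reeb vector field $\xi$ enters. Recall that $\xi$ affects the construction only through the grading $R_m = \bigoplus_{\langle\alpha,\xi\rangle\le m}R_\alpha$, hence through the semigroups $\Gamma_m^t$ and the Okounkov bodies $\Delta=\Delta_\xi$, $\Delta^t=\Delta^t_\xi$. The formula we will work from is
\[
S(\cF,\xi)=\frac{1}{\vol(\Delta_\xi)}\int_{\Delta_\xi} G_\xi\, d\rho,
\]
where $G_\xi$ is the concave transform and $\rho$ is Lebesgue measure.

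For homogeneity, I would first observe that replacing $\xi$ by $c\xi$ for $c>0$ replaces the condition $\langle\alpha,\xi\rangle\le m$ by $\langle\alpha,\xi\rangle\le m/c$; equivalently, $R_m$ with respect to $c\xi$ is $R_{m/c}$ with respect to $\xi$. Passing to the convex cones $\Sigma(\Gamma)$ and slicing at height $1$, this means $\Delta_{c\xi}=c^{-1}\Delta_\xi$ and, more precisely, $\Delta^t_{c\xi}=c^{-1}\Delta^{t}_\xi$ after matching the scaling in the extra coordinate $m$; one must check that the normalization $R^t_m=\cF^{mt}R_m$ rescales consistently, which it does because $t$ is the slope variable and $m\mapsto m/c$ forces $mt \mapsto (m/c)(ct)$, i.e. the filtration level $t$ at $c\xi$ corresponds to level $ct$ at $\xi$. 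Hence $G_{c\xi}(x) = c\, G_\xi(cx)$ on $\Delta_{c\xi}=c^{-1}\Delta_\xi$. Changing variables $x\mapsto cx$ in the integral, $\vol(\Delta_{c\xi})=c^{-n}\vol(\Delta_\xi)$ and $\int_{\Delta_{c\xi}}G_{c\xi}\,d\rho = c\cdot c^{-n}\int_{\Delta_\xi}G_\xi\,d\rho$, so the ratio picks up exactly a factor $c^{-1}$, giving $S(\cF,c\xi)=c^{-1}S(\cF,\xi)$. Alternatively, and perhaps more cleanly, I would run the same scaling directly on the jumping numbers using Corollary \ref{Sconv}: $a_{m,j}(\cF,c\xi)$ relates to $a_{\lceil m/c\rceil, j}(\cF,\xi)$, and $S_m$ is an average that absorbs the $m\mapsto m/c$ reparametrization into a $c^{-1}$.

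For continuity in $\xi$, the natural route is to fix a filtration $\cF$ and compare $S(\cF,\xi)$ and $S(\cF,\xi')$ for $\xi'$ near $\xi$ in the Reeb cone $\cC$. I would use the integral-geometric description: both $\vol(\Delta_\xi)=\vol(\xi)$ and $\int_0^\infty \vol(\Delta^t_\xi)\,dt$ depend continuously on $\xi$. Continuity of $\vol(\xi)$ is classical (it is real-analytic in $\xi\in\cC$, e.g. by \cite{CSIrregular}). For the numerator, the key input is a \emph{uniform} linear bound: since $T(\cF)$ is defined via $T_m=\tfrac1m\sup\{t: \cF^{mt}R_m\ne 0\}$ and the filtration is linearly bounded, one checks (as in the proof of Proposition \ref{TCfiltration}, using the Izumi-type estimate Lemma \ref{Izumi} and \cite[Proposition 4.8]{BJIzumi}) that $T(\cF,\xi)$ stays bounded uniformly for $\xi$ in a compact neighborhood of a given $\xi_0\in\cC$; this gives a uniform $L^\infty$ bound on the functions $t\mapsto\vol(\Delta^t_\xi)$, supported in a common interval $[0,C]$. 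Then, since $\dim_\KK R^t_m$ is, for each fixed $m$, a locally constant-in-$\xi$ datum away from finitely many walls, $g^\xi_m(t)=m^{-n}\dim R^t_m \to g^\xi(t)=\vol(\Delta^t_\xi)$ with the convergence controlled uniformly enough (dominated convergence in both $m$ and $\xi$) to conclude $\int_0^C g^\xi(t)\,dt$ is continuous in $\xi$. Dividing by the continuous, nonvanishing function $\vol(\xi)$ finishes the argument.

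The main obstacle I anticipate is the continuity of the numerator $\int_0^\infty\vol(\Delta^t_\xi)\,dt$ in $\xi$: the Okounkov bodies $\Delta^t_\xi$ can jump in shape as $\xi$ crosses rational walls, and one has to be careful that these jumps do not survive in the limit. The right way to handle this is probably to avoid bodies altogether and argue at the level of the dimension counts $\dim_\KK \cF^\lambda R_m$, which change continuously (indeed piecewise-linearly) as functions of $\xi$ for each fixed $m$ and $\lambda$, and then interchange the limit $m\to\infty$ with the limit $\xi'\to\xi$, the interchange being justified by the uniform linear boundedness established above. Everything else is a bookkeeping of scalings.
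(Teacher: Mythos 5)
Your homogeneity argument is essentially the paper's: the author likewise derives $S(\cF,c\xi)=c^{-1}S(\cF,\xi)$ by computing $S_m$ directly from the formula in Corollary \ref{Sconv}, and your jumping-number version of this is correct. (In the Okounkov-body version you have the scaling of the concave transform inverted: since filtration level $t$ for $c\xi$ corresponds to level $ct$ for $\xi$, one gets $G_{c\xi}(x)=c^{-1}G_\xi(cx)$ rather than $c\,G_\xi(cx)$; as written, your displayed ratio actually comes out to $c$, i.e.\ homogeneity of degree $+1$. This is only a bookkeeping slip, but the sign of the exponent is precisely what the proposition asserts, so the jumping-number route is the one to keep.)

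The continuity argument has a genuine gap, and it is the one you flagged yourself without resolving. First, the claim that $\dim_\KK\cF^\lambda R_m$ "changes continuously (indeed piecewise-linearly)" in $\xi$ is false: for fixed $m$ and $\lambda$ these are integer-valued functions of $\xi$ that jump whenever some $\la\alpha,\xi\ra$ crosses $m$, and the union over all $m$ of these walls is dense in the Reeb cone, so there is no neighborhood of $\xi$ on which the approximants $S_m(\cF,\cdot)$ stabilize or are even continuous. Second, the uniform bound on $T(\cF)$ for $\xi'$ near $\xi$ gives domination, but domination alone does not justify interchanging $\lim_{\xi'\to\xi}$ with $\lim_{m\to\infty}$; for that one needs some uniformity in $\xi'$ of the convergence in $m$, which is exactly what is not established. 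The paper avoids the interchange altogether by a sandwich argument: writing $\xi\preceq\xi'$ when $\la\alpha,\xi\ra\le\la\alpha,\xi'\ra$ for all $\alpha\in\Lambda$, it uses that $S(\cF,\cdot)$ is non-increasing for this partial order, chooses scalars $t_l\downarrow 1$ and $s_l\uparrow 1$ with $\xi\preceq t_l\xi_l$ and $s_l\xi_l\preceq\xi$, and combines this one-sided comparison with the exact homogeneity $S(t_l\xi_l)=t_l^{-1}S(\xi_l)$ to get $\limsup_l S(\xi_l)\le S(\xi)\le\liminf_l S(\xi_l)$. That monotone comparison is the substantive input your proposal is missing; without it, or some substitute uniform estimate, the limit interchange remains unjustified.
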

\begin{proof}
	The homogeneity property follows from directly computing $S_m$ using the formula in the previous Corollary.
	Given $\xi_l\to \xi$, one can pick $\{t_l\ge 1\}$ decreasing and $t_l\to 1$ such that $t_l\xi_l\to \xi$, and $\la \alpha, t_l\xi_l-\xi\ra>0,\forall \alpha\in \Lambda$. Then one has
	\[t_l^{-1}S(\xi_l) = S(t_l\xi_l)\leq S(\xi), \forall l.\]
	This shows that 
	\[\limsup_{l\to \infty}S(\xi_l) = \limsup_{l\to \infty} t_l^{-1}S(\xi_l)\leq S(\xi).\]
	Similarly one can pick $\{s_l\leq 1\}$ increasing with limit $1$ so that $s_l\xi_l\to \xi$ and $(s_l\xi_l-\xi)(\alpha)<0, \forall \alpha$. The homogeneity property then gives
	\[\liminf_{l\to \infty} S(\xi_l)\geq S(\xi).\]
	This completes the proof of continuity.
\end{proof}

\subsection{The quasi-regular case}
When $\xi$ is quasi-regular, as discussed in section \ref{quasiregular}, $X$ is the affine cone over a polarized pair $(V, L)$, i.e. $X=\Spec R$ where $R=\bigoplus_{t\in \ZZ_{\geq 0}}H^0(V, t L)$. In what below, we scale $\xi$ to be integral. Let $\cF$ be a filtration on $R$. In \cite{BlumJonsson}, similar invariants are defined using the Okounkov bodies. We now show that the volume of $\cF$ we defined agrees with the $S$-invariant defined in \cite{BlumJonsson}. 
Given a linearly bounded filtration $\cF$ we denote by 
$$\tilde{S} = \lim\limits_{t\to \infty} \tilde{S}_t: =  \lim_{t\to \infty} \frac{1}{tN_t} \sum_j a_{t,j}$$
the $S$-invariant for $(V, L)$ defined in \cite{BlumJonsson}. Here $N_t := h^0(V, tL)$, and $\{a_{t,j}\}$ are the jumping numbers of $\cF$.
\begin{lem}\label{volEqual}
	In the notation above, 
	\begin{enumerate}[(a)]
		\item $\vol(\xi) = \vol(L).$
		\item $ S(\cF) =\frac{n}{n+1} \tilde{S}(\cF).$
	\end{enumerate}
\end{lem}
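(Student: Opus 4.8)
The plan is to reduce everything to a statement about the relationship between the two Okounkov-body constructions: the ``cone'' construction on $R=\bigoplus_t H^0(V,tL)$ used in this paper, and the ``base'' Okounkov body $\Delta(L)\subset\RR^{n-1}$ used in \cite{BlumJonsson}. For part (a), since $\xi$ is scaled to be integral, the grading by $\la\alpha,\xi\ra$ is exactly the grading of $R$ as a section ring, so $R_m=\bigoplus_{t\le m}H^0(V,tL)$ and $\dim_\KK R_m=\sum_{t\le m}h^0(V,tL)$. By asymptotic Riemann--Roch, $h^0(V,tL)=\frac{(L^{\dim V})}{(\dim V)!}t^{\dim V}+O(t^{\dim V-1})$ with $\dim V=n-1$, so summing over $t\le m$ gives $\dim_\KK R_m=\frac{(L^{n-1})}{n!}m^n+O(m^{n-1})$, and comparing with the definition of $\vol(\xi)$ via $m^n/n!$ yields $\vol(\xi)=(L^{n-1})=\vol(L)$. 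Alternatively, and more in the spirit of the paper, I would note that the valuation $v=(v_1,\dots,v_n)$ can be chosen compatibly: picking $x_1$ to be a section cutting out $V$ (so $v_1=\mathrm{ord}_V$ records the ``$t$'' degree) and $x_2,\dots,x_n$ adapted to a flag on $V$, the fiber of $\Delta$ over $v_1=s$ is precisely $s\cdot\Delta(L)$, whence $\vol(\Delta)=\int_0^1 s^{n-1}\vol(\Delta(L))\,ds\cdot(\text{scaling})$; but this integral identity is exactly what we need for (b), so let me organize it that way.

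For part (b), the key geometric input is that the local Okounkov body $\Delta$ of $(X,\xi,0)$ fibers over the interval $[0,1]$ (the normalized $\mathrm{ord}_V$-coordinate) with fiber over $s$ equal to $s\,\Delta(L)$, and — crucially — the same holds for the filtered bodies $\Delta^t$: the fiber of $\Delta^t$ over $s$ is $s\,\Delta(L)^{t/s}$, where $\Delta(L)^{u}$ denotes the filtered Okounkov body of $(V,L,\cF)$ at level $u$ (with the convention that we rescale the filtration level by the section degree). Granting this, the concave transform $G$ on $\Delta$ is related to the concave transform $G_V$ on $\Delta(L)$ by $G(s,x)=s\,G_V(x/s)$ for $(s,x)\in\Delta$ with $x\in s\,\Delta(L)$. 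Then by the last displayed formula in the definition of $S(\cF)$,
\[
S(\cF)=\frac{1}{\vol(\Delta)}\int_\Delta G\,d\rho=\frac{1}{\vol(\Delta)}\int_0^1\!\!\int_{s\Delta(L)} s\,G_V(x/s)\,dx\,ds.
\]
Substituting $x=sy$ with $y\in\Delta(L)$, the inner integral becomes $s^{n}\int_{\Delta(L)}G_V(y)\,dy$, and since $\vol(\Delta)=\frac{1}{n}\vol(\Delta(L))$ (the $s\mapsto s^{n-1}$ weight integrated over $[0,1]$) while $\int_0^1 s^n\,ds=\frac{1}{n+1}$, we get
\[
S(\cF)=\frac{n}{\vol(\Delta(L))}\cdot\frac{1}{n+1}\int_{\Delta(L)}G_V\,d\rho_V=\frac{n}{n+1}\,\tilde S(\cF),
\]
using that $\tilde S(\cF)=\frac{1}{\vol(\Delta(L))}\int_{\Delta(L)}G_V\,d\rho_V$ is the Okounkov-body description of the $S$-invariant from \cite{BlumJonsson} (equivalently $\tilde S=\lim \frac{1}{tN_t}\sum_j a_{t,j}$ via the analogue of Corollary \ref{Sconv} on $V$).

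The main obstacle is justifying the fibration statement for the \emph{filtered} bodies $\Delta^t$ cleanly — i.e.\ identifying the slice of $\Delta^t$ over the $\mathrm{ord}_V$-coordinate $s$ with $s\,\Delta(L)^{t/s}$ — because one must track how the bigrading (by $m$ in the cone construction versus by section degree $t$ in the base construction) interacts with the filtration level. Concretely, an element of $R_m^t=\cF^{mt}R_m$ is a sum of sections $f_d\in H^0(V,dL)$ over $d\le m$, each lying in $\cF^{mt}$; when we read off $v_1(f)=\mathrm{ord}_V$ we are selecting the lowest-degree piece, and we need that the filtration level $mt$ compared against the degree $d=v_1(f)$ produces exactly the rescaled level $t\cdot(m/d)$ in the limit. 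This is a matter of checking that the relevant semigroups $\Gamma^t$ and their cones behave well under the projection to the first coordinate, which should follow from the same LMN-type arguments (\cite{LMOkounkovBody}) already invoked, together with the multiplicativity of $\cF$; I would spell this out as a lemma about slices of $\Sigma(\Gamma^t)$ before assembling the integral computation above. Once that is in place, part (b) is the change-of-variables calculation shown, and part (a) is its $t=0$ specialization.
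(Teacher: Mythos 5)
Your proposal is correct in substance but takes a genuinely different route from the paper for part (b). The paper never touches the Okounkov-body fibration: it works directly with jumping numbers, observing that since $\cF$ is $\TT$-invariant it decomposes along the grading, so the jumping numbers of $\cF$ on $R_m=\bigoplus_{t\le m}H^0(V,tL)$ are exactly the disjoint union over $t\le m$ of the jumping numbers on $H^0(V,tL)$; hence $\sum_j a_{m,j}=\sum_{t\le m}tN_t\tilde S_t$, and Corollary \ref{Sconv} reduces (b) to the elementary asymptotic $\frac{n!}{\vol(L)m^{n+1}}\sum_{t\le m}tP(t)\tilde S\to\frac{n}{n+1}\tilde S$ via Faulhaber's formula (your part (a), first option, is essentially identical to the paper's). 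Your route instead establishes the slice identity $G(s,x)=s\,G_V(x/s)$ on $\Delta=\bigcup_{s\in[0,1]}\{s\}\times s\Delta(L)$ and integrates; the change-of-variables computation is correct ($\vol(\Delta)=\tfrac1n\vol(\Delta(L))$, $\int_0^1 s^n\,ds=\tfrac1{n+1}$, giving $\tfrac n{n+1}$), and it is more geometric, but it is also heavier: it requires an adapted flag with $v_1=\mathrm{ord}_V$, the identification of $\tilde S$ with the Okounkov integral on $V$ from \cite{BlumJonsson}, and above all the fibration lemma for the filtered bodies $\Delta^t$ that you correctly flag as the main obstacle and defer. That lemma is provable (the fibers over different $s$ are disjoint in the first coordinate, so $\Gamma_m^t=\bigsqcup_{d\le m}\{d\}\times v_V(\cF^{mt}H^0(V,dL))$, and one then argues via concavity and density of rational slopes as in \cite{LMOkounkovBody}), but note that the paper's decomposition of jumping numbers makes it entirely unnecessary: the identity $\sum_j a_{m,j}=\sum_{t\le m}tN_t\tilde S_t$ is immediate from $\cF^\lambda R_m=\bigoplus_{t\le m}\cF^\lambda H^0(V,tL)$ and already encodes all the information your slice lemma would provide. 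So your argument buys a conceptual picture of where $\tfrac n{n+1}$ comes from (the weight $s^n$ in the cone), at the cost of one nontrivial lemma the paper's more elementary bookkeeping avoids.
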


\begin{proof}
	$(a)$	Fix $\varepsilon>0$. By Theorem \ref{VolumeConvergence} we have that 
		\[\vol(\xi) = \lim_{m\to \infty} \frac{n!}{m^n}\sum_{t\leq m} N_t.\]
		Let $P(t)$ be the Hilbert polynomial of $L$. Note that $N_t = P(t)= \frac{(L^{n-1})}{(n-1)!}t^{n-1}+O(t^{n-2})$ for $t\gg 0$ by orbifold Riemann-Roch(see \cite[(2.17)]{RossThomas}. Pick $T\in \ZZ_{\geq 0}$ such that $N_t=P(t)$ for $t\geq T$.
		Now
		\begin{align*}
			\left|\frac{n!}{m^n}\sum_{t\leq m} N_t-\frac{n!}{m^n}\sum_{t\leq m} P(t)\right|&\leq \left|\frac{n!}{m^n}\sum_{t\leq T}(N_t-P(t))\right| \\
			&\leq \frac{n!(T+1)}{m^n}\sup_{t\leq T}|N_t-P(t)|< \varepsilon,
		\end{align*}
		for $m\gg 0$.
		Thus
		\begin{align*}
			\vol(\xi) &= \lim_{m\to \infty} \frac{n!}{m^n}\sum_{t\leq m} N_t = \lim_{m\to \infty} \frac{n!}{m^n}\sum_{t\leq m} P(t)\\
			& = \lim_{m\to \infty} \frac{n(L^{n-1})}{m^n}\sum_{t\leq m} (t^{n-1}+O(t^{n-2})) = \frac{n(L^{n-1})}{m^n}(\frac{m^n}{n}+O(m^{n-1}))=(L^{n-1}) = \vol(L),
		\end{align*}
		where the third to last equality follows from Faulhaber's formula.
		
		$(b)$ By Corollary \ref{Sconv} and part $(a)$ we can write
		\[S(\cF) = \lim_{m\to \infty} \frac{n!}{\vol(L)m^{n+1}}\sum_{t\leq m} tN_t\tilde{S}_t.\]
		For fixed $\varepsilon >0$, we can pick $T$ so that $|\tilde{S}-\tilde{S}_t|<\varepsilon$, and $N_t = P(t)$ for all $t \geq T$. Then 
		\begin{align*}
			&\left|\frac{n!}{\vol(L)m^{n+1}}\sum_{t\leq m}tN_t\tilde{S}_t - \frac{n!}{\vol(L)m^{n+1}}\sum_{t\leq m} tP(t)\tilde{S}\right|\\
			&\leq \left|\frac{n!}{\vol(L)m^{n+1}}\sum_{t<T}(tP(t)\tilde{S} - th^0(V,tL)\tilde{S}_t)\right|+\varepsilon \frac{n!}{\vol(L)m^{n+1}}\sum_{t=T}^mtP(t)\\
			&\lesssim \varepsilon
		\end{align*}
		for $m\gg 0$.
		Thus using again Faulhaber's formula, we get
		\begin{align*}
			S(\cF) = \lim_{m\to \infty}\frac{n!\tilde{S}}{\vol(L)m^{n+1}}\sum_{t\leq m}tP(t)=\lim_{m\to \infty}\frac{\tilde{S}}{m^{n+1}}(\frac{m^{n+1}}{n+1} +O(m^{n})) = \frac{n}{n+1} \tilde{S}.
		\end{align*}

\end{proof}

\section{Monge-Amp\`ere energy of a test configuration}
In this section, we prove that the volume of a filtration arising from a test configuration agrees with the invariant considered in \cite{LX18, LWX}.
\begin{defn}
	We define the NA Monge-Amp\`ere energy of a test configuration $(\cX, \xi, \eta) $ for $(X, \xi)$ to be 
	$$E^{\mathrm{NA}}(\cX,\xi, \eta):= S(\cF),$$
	where $(\cX,\xi, \eta)$ is a test configuration for $(X,\xi)$, and $\cF = \cF(\cX, \eta)$ is the corresponding filtration.
\end{defn} 

The following theorem addresses part of the Theorem \ref{main}.
\begin{thm}\label{MAenergy}
	Let $(\cX = \Spec\cR,\xi, \eta)$ be a test configuration for $(X,\xi)$ with $\eta$ in the Reeb cone, and let $\cF = \cF(\cX, \eta)$ be the corresponding filtration. Then
	$$E^{\mathrm{NA}}(\cX,\xi, \eta) =\frac{1}{n+1}\frac{D_{-\eta} \vol_{\cX_0}(\xi)}{\vol(\xi)}.$$
\end{thm}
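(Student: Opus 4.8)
The plan is to reduce both sides to explicit sums over the weight decomposition of $\cR$ and match the asymptotics term by term. First I would recall that by Proposition \ref{TCfiltration}, $\cR \cong \bigoplus_{\alpha\in\Lambda}\bigoplus_{\lambda\in\ZZ} t^{-\lambda}\cF^\lambda R_\alpha$, so that the central fiber is $\cX_0 = \Spec \cR/t\cR$ with coordinate ring $\cR_0 = \bigoplus_{\alpha,\lambda}\cF^\lambda R_\alpha/\cF^{\lambda+1}R_\alpha$. The torus $\TT\times\GG_m$ acts on $\cR_0$ with the $\GG_m$-part recording $\lambda$ and the weight of $\eta$ on the graded piece indexed by $(\alpha,\lambda)$ being $\langle\alpha,\eta'\rangle$ for the $\TT$-part plus $\lambda$ for the $\GG_m$-part (after fixing compatible coordinates); the key point is that $\dim \cR_0$ in bidegree $(\alpha,\lambda)$ equals $\dim \cF^\lambda R_\alpha - \dim \cF^{\lambda+1}R_\alpha$, the number of jumping numbers of $\cF$ on $R_\alpha$ equal to $\lambda$.

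Next I would write out $\vol_{\cX_0}(\xi - \varepsilon\eta)$ using its definition as the leading coefficient of the counting function on $\cR_0$: for small $\varepsilon$, $\xi - \varepsilon\eta$ is still in the Reeb cone of $\TT\times\GG_m$, and
\[
\vol_{\cX_0}(\xi-\varepsilon\eta) = \lim_{m\to\infty}\frac{n!}{m^n}\sum_{\langle\alpha,\xi\rangle - \varepsilon\langle(\alpha,\lambda),\eta\rangle \le m}\dim(\cR_0)_{\alpha,\lambda},
\]
where $\langle(\alpha,\lambda),\eta\rangle$ is, up to the sign convention for the filtration, $\lambda$ (the test configuration is built so that $\eta$ contributes precisely the filtration parameter). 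Differentiating in $\varepsilon$ at $0$ — which is legitimate because the volume function on the Reeb cone is differentiable, as used in \cite{CSIrregular, LX18, LWX} — produces $D_{-\eta}\vol_{\cX_0}(\xi) = \lim_{m}\frac{n!}{m^{n+1}}\sum_{\langle\alpha,\xi\rangle\le m}\sum_{\lambda}\lambda\cdot(\dim\cF^\lambda R_\alpha - \dim\cF^{\lambda+1}R_\alpha)$, after an Abel-summation/change-of-variables argument relating the derivative of a counting function to a weighted count on the boundary. On the other side, by Corollary \ref{Sconv},
\[
S(\cF) = \lim_{m\to\infty}\frac{n!}{\vol(\xi)m^{n+1}}\sum_{j=1}^{N_m} a_{m,j} = \lim_{m\to\infty}\frac{n!}{\vol(\xi)m^{n+1}}\sum_{\langle\alpha,\xi\rangle\le m}\sum_{\lambda}\lambda\cdot\bigl(\dim\cF^\lambda R_\alpha - \dim\cF^{\lambda+1}R_\alpha\bigr),
\]
since $\sum_\lambda \lambda(\dim\cF^\lambda R_\alpha - \dim\cF^{\lambda+1}R_\alpha) = \sum_{\lambda\ge 1}\dim\cF^\lambda R_\alpha$ is exactly the sum of jumping numbers on $R_\alpha$. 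Comparing, $S(\cF) = \frac{1}{\vol(\xi)}\cdot\bigl(\text{the sum defining } D_{-\eta}\vol_{\cX_0}(\xi)\bigr)$ up to the factor coming from the $m^{n+1}$ versus $m^n$ normalization: the extra averaging over the "height" direction $\langle\alpha,\xi\rangle\le m$ contributes the factor $\frac{1}{n+1}$ exactly as in the Faulhaber computation of Lemma \ref{volEqual}(b).

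The main obstacle I anticipate is making the identification of the $\eta$-weight on $\cR_0$ with the filtration parameter $\lambda$ completely precise, including the sign and any shift by $\langle\alpha,\xi\rangle$ (the Reeb field $\xi$ on $\cX$ versus $\xi$ on $X$ differ by the $\GG_m$-grading), and correspondingly justifying that differentiating $\vol_{\cX_0}(\xi-\varepsilon\eta)$ and extracting the $m^{n+1}$-asymptotics commute — i.e. that the convergence in Theorem \ref{VolumeConvergence}, applied to the central fiber with the perturbed Reeb field, is uniform enough in $\varepsilon$ near $0$ to differentiate under the limit. I would handle this by working first with $\QQ$-Gorenstein or at least finitely generated data (so $\cR_0$ is a finitely generated graded ring and its Hilbert-type function is eventually polynomial in each cone), applying the local Okounkov body machinery of Section 3 to both $R$ and $\cR_0$ simultaneously, and invoking continuity of $\vol$ and of $S$ (Proposition \ref{continuity}) to pass to the limit; the differentiability of $\vol_{\cX_0}$ in the Reeb direction is quoted from \cite{CSIrregular}. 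Once the bookkeeping is set up, the equality is a direct comparison of the two limit formulas.
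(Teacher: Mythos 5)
Your overall strategy --- reducing both sides to weighted counts of the bigraded pieces $w_{\alpha,\lambda}=\dim\cF^\lambda R_\alpha-\dim\cF^{\lambda+1}R_\alpha$ of the central fiber --- is the same as the paper's, and your identification of $S(\cF)$ with $\lim_m \tfrac{n!}{\vol(\xi)m^{n+1}}\sum_{\la\alpha,\xi\ra\le m}\sum_\lambda \lambda\, w_{\alpha,\lambda}$ is correct. But there is a genuine gap on the other side, and it is not just the bookkeeping worry you flag at the end. Your displayed formula
\[
D_{-\eta}\vol_{\cX_0}(\xi)=\lim_{m\to\infty} \frac{n!}{m^{n+1}}\sum_{\la\alpha,\xi\ra\le m}\sum_\lambda \lambda\, w_{\alpha,\lambda}
\]
is off by a factor of $n+1$: if it were true, comparing it with your formula for $S(\cF)$ --- which has the \emph{same} range of summation and the \emph{same} normalization $m^{n+1}$ --- would give $S(\cF)=D_{-\eta}\vol_{\cX_0}(\xi)/\vol(\xi)$, contradicting the statement you are proving. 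Your appeal to ``extra averaging over the height direction'' cannot rescue this, because there is no remaining discrepancy of normalization between the two sums for it to act on. The correct relation is $D_{-\eta}\vol_{\cX_0}(\xi)=(n+1)\lim_m\tfrac{n!}{m^{n+1}}\sum_{\la\alpha,\xi\ra\le m}\sum_\lambda\lambda\, w_{\alpha,\lambda}$, and the factor $n+1$ is precisely the ratio $\Gamma(n+2)/\Gamma(n+1)$ that appears when one passes from the sharp cutoff $\la\alpha,\xi\ra\le m$ to the exponential weight $e^{-\la\alpha,\xi\ra/m}$.

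This points to the unproved step: the interchange of $\tfrac{d}{d\varepsilon}$ with $\lim_{m\to\infty}$ applied to step functions of $\varepsilon$ is the entire analytic content, and the differentiability of $\vol_{\cX_0}$ on the Reeb cone does not by itself license differentiating the counting functions termwise. The paper sidesteps this by quoting the index character expansion of \cite{CSIrregular}, namely $\sum_{\alpha,k}e^{-t\la\alpha,\xi\ra}k\,w_{\alpha,k}=D_{-\eta}\vol_{\cX_0}(\xi)\,t^{-n-1}+O(t^{-n})$, which already packages the directional derivative; what then remains is an Abelian comparison between this exponentially weighted sum and the sharp-cutoff sum defining $S(\cF)$, which the paper performs by reducing to the quasi-regular case $X=C(V,L)$ (using continuity of both sides in $\xi$, via Proposition \ref{continuity} and the smoothness of $\vol_{\cX_0}$) and estimating $\sum_t e^{-t/m}tN_t\tilde S_t$ against $\sum_t e^{-t/m}t^n$ --- this is exactly where the factor $n+1$ enters. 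To repair your argument, either take that expansion as your computation of $D_{-\eta}\vol_{\cX_0}(\xi)$ and then carry out the Abelian/Tauberian comparison honestly, or actually establish the uniform-in-$\varepsilon$ asymptotics you allude to; as written, the step ``differentiating \dots\ produces'' is asserted rather than proved, and it is asserted with the wrong constant.
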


\begin{proof}
	Let $\fa_k= \bigoplus_{\alpha} \{f\in R_\alpha: s^{-k}\bar{f}\in \cR_\alpha\}$, where $\bar{f}$ are defined as in Proposition \ref{TCfiltration}, for $k \in \ZZ$. By \cite[Lemma 2.17]{LWX}, this defines a graded sequence of ideals with $\fa_{k+1}\subset \fa_k$ and $\fa_k= R$ for $k\leq 0$. Further we can write $\cR = \bigoplus_{k\in \ZZ} s^{-k}\fa_k$. Then the central fiber is given by
	$$\cX_0 = \Spec \bigoplus_k (\fa_k / \fa_{k+1}) = \Spec \bigoplus_{k,\alpha}\frac{\fa_k\cap R_\alpha}{\fa_{k+1}\cap R_\alpha}.$$
	Let $w_{\alpha,k} = \dim_\KK \frac{\fa_k\cap R_\alpha}{\fa_{k+1}\cap R_\alpha}.$ 
	First note that 
	$$\vol_{\cX_0}(\xi) = \lim_{m\to \infty}\frac{n!}{m^n}\sum_{\la\alpha, \xi\ra\leq m}\sum_k w_{\alpha,k}  = \lim_{m\to \infty}\frac{n!}{m^n}\sum_{\la\alpha, \xi\ra\leq m}\dim_\KK R_\alpha = \vol(\xi).$$
	In view of \cite[Theorem 4.14]{CSIrregular}, 
	$\sum_{\alpha,k}e^{-t\la\alpha, \xi\ra}kw_{\alpha,k}$ admits a meromorphic expansion in a neighborhood of $0\in \CC$ of the form
	$$\sum_{\alpha,k}e^{-t\la\alpha, \xi\ra}kw_{\alpha,k} = D_{-\eta} \vol_{\cX_0}(\xi)t^{-n-1}+O(t^{-n}).$$
	In particular, for $m \gg 0$, 
	$$\sum_{\alpha,k}e^{-\frac{\la\alpha, \xi\ra} m}kw_{\alpha,k} = D_{-\eta} \vol_{\cX_0}(\xi)m^{n+1}+O(m^{n}).$$
	On the other hand, 
	$$S(\cF) = \lim_{m\to \infty}S_m(\cF) = \lim_{m\to \infty} \frac{n!}{m^{n+1}\vol(\xi)}\sum_{\la \alpha, \xi\ra\leq m, k}kw_{\alpha,k}.$$
	It remains to show that 
	\[\lim_{m\to \infty}\frac{1}{m^{n+1}}\sum_{\alpha,k}e^{-\frac{\la\alpha, \xi\ra}{m}}kw_{\alpha, k} = (n+1)\vol(\xi)S(\cF, \xi).\tag{1}\]
	It's shown in \cite[Lemma 2.11]{LWX}(see also \cite{CS19, LX18, MSY}) that $\vol_{\cX_0}(\xi)$ is smooth in $\xi$, and hence $D_{-\eta} \vol_{\cX_0}(\xi)$ is continuous in $\xi$.
	By Proposition \ref{continuity}, it therefore suffices to show $(1)$ when $\xi$ is rational. By the homogeneity property, we may also assume $\xi$ is integral. In this case, $X=C(V, L)$, and 
	\[R = \bigoplus_{t\in \ZZ_{\geq 0}} H^0(V, t L).\]
	Then for each $t$, we are summing up the jumping numbers for the filtration on $H^0(V, tL)$, and so
	\[\lim_{m\to \infty}\frac{1}{m^{n+1}}\sum_{\alpha,k}e^{-\frac{\la\alpha, \xi\ra}{m}}kw_{\alpha, k} = \lim_{m\to \infty}\frac{1}{m^{n+1}}\sum_{t=0}^\infty e^{-\frac{t}{m}}t N_t \tilde{S}_t(L) \]
	
	Fix $\varepsilon >0$. There is some constant $C_1>0$ such that $N_t$ equals the Hilbert polynomial  $\frac{\vol(\xi)}{(n-1)!}t^{n-1}+O(t^{n-2})$ for $t> C_1$, and some other constant $C_2(\varepsilon) >0$ such that $|\tilde{S}_t - \tilde{S}| <\varepsilon$ for $t> C_2$. Let $C = \max\{C_1, C_2\}$. Then there is some constant $M$ such that 
	\begin{align*}
		&\left|\frac{1}{m^{n+1}}\sum_{t=0}^\infty e^{-\frac{t}{m}}t N_t \tilde{S}_t -\frac{\vol(\xi)}{m^{n+1}(n-1)!}\sum_{t=0}^\infty e^{-\frac{t}{m}}t^n \tilde{S}\right| \\
		&\leq \frac{1}{m^{n+1}}M+	\frac{\vol(\xi)}{(n-1)!m^{n+1}}\left|\sum_{t=C+1}^\infty e^{-\frac{t}{m}}(t^n+O(t^{n-1}) )\tilde{S}_t -  \sum_{t=C+1}^\infty e^{-\frac{t}{m}}t^nS\right|\\
		&\leq 	\frac{1}{m^{n+1}}M+	\frac{\varepsilon\vol(\xi)}{(n-1)!m^{n+1}}\left(\sum_{t=C+1}^\infty e^{-\frac{t}{m}}t^n+ O(m^n) \right)\\
		&\leq 	\frac{1}{m^{n+1}}M+	\frac{\varepsilon\vol(\xi)}{(n-1)!m^{n+1}} (n! m^{n+1} +O(m^n))\\
		&\lesssim\varepsilon
	\end{align*}
	for $m\gg 0$.
	Thus by Lemma \ref{volEqual}
	\begin{align*}
		\lim_{m\to \infty}\frac{1}{m^{n+1}}\sum_{t=0}^\infty e^{-\frac{t}{m}}t N_t \tilde{S}_t(L) =\lim_{m\to \infty} \frac{\vol(\xi)}{m^{n+1}(n-1)!}\sum_{t=0}^\infty e^{-\frac{t}{m}}t^n \tilde{S}
		=(n+1)\vol(\xi) S.
	\end{align*}
\end{proof}

\begin{remark}\label{rationalEnergy}
This proposition together with \cite[Proposition 4.8]{LWX} shows that $E^{\na}$ computes the limit slope of the classical Monge-Amp\`ere Energy.
\end{remark}

\begin{cor}Suppose $\xi$ is integral, and $(\cX, \eta)$ is a test configuration for $(X,\xi)$. Let $\cF$ be the induced filtration, and $(V, L)$ be the polarized pair so that $X = C(V, L)$. Then $\cF$ induces a test configuration $(\cV, \cL)$ for $(V, L)$, and we have
\[E^{\na}(\cX, \xi, \eta) = \frac{1}{n+1}E^{\na}(\cV, \cL):= \frac{(\bar{\cL}^n)}{(n+1)\vol(L)}. \]
\end{cor}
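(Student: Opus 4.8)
The plan is to reduce the statement, through identifications already available in the paper, to the intersection-theoretic formula for the $S$-invariant of a filtration on a polarized pair. First I would produce the test configuration $(\cV,\cL)$. Since $\xi$ is integral, the $\xi$-grading presents $R=\bigoplus_{t\ge0}R^{(t)}$ with $R^{(t)}:=\bigoplus_{\la\alpha,\xi\ra=t}R_\alpha\cong H^0(V,tL)$, as recalled in Section \ref{quasiregular}. By Proposition \ref{TCfiltration} the filtration $\cF$ is finitely generated and linearly bounded, and by \cite[Lemma 2.17]{LWX} the Rees algebra
\[\cR\;\cong\;\bigoplus_{\alpha\in\Lambda}\Big(\bigoplus_{\lambda\in\ZZ}s^{-\lambda}\cF^{\lambda}R_\alpha\Big)\;=\;\bigoplus_{t\ge0}\Big(\bigoplus_{\lambda\in\ZZ}s^{-\lambda}\cF^{\lambda}H^0(V,tL)\Big)\]
is a finitely generated flat $\KK[s]$-algebra, where $s$ is the coordinate on $\AA^1$ and $\eta$ acts by $s\mapsto\tau s$. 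I would set $\cV:=\Proj_{\KK[s]}\cR$ with respect to the $t$-grading and $\cL:=\cO_{\cV}(1)$, replacing $\cL$ by a Veronese power if necessary to make it relatively very ample. Inverting $s$ recovers $V\times(\AA^1\setminus\{0\})$, and reduction modulo $s$ gives central fibre $\cV_0=\Proj\bigoplus_t\mathrm{gr}_{\cF}H^0(V,tL)$ with $H^0(\cV_0,t\cL_0)=\mathrm{gr}_{\cF}H^0(V,tL)$ for $t\gg0$; so $(\cV,\cL)$ is an ample test configuration for $(V,L)$ with $\GG_m$-action induced by $\eta$, and $\cX=\Spec\cR$ is the relative affine cone over it. Write $(\bar{\cV},\bar{\cL})$ for the compactification over $\PP^1$ obtained by gluing the trivial family at infinity.

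Next, by the definition of $E^{\na}$ preceding Theorem \ref{MAenergy} we have $E^{\na}(\cX,\xi,\eta)=S(\cF)$; and since $\xi$ is integral, Lemma \ref{volEqual} gives $\vol(\xi)=\vol(L)$ and $S(\cF)=\frac{n}{n+1}\tilde S(\cF)$, where $\tilde S(\cF)$ denotes the $S$-invariant of $\cF$ on $(V,L)$ in the sense of \cite{BlumJonsson}. It therefore suffices to prove $\tilde S(\cF)=\frac{(\bar{\cL}^{n})}{n\,\vol(L)}$, after which
\[E^{\na}(\cX,\xi,\eta)=S(\cF)=\frac{n}{n+1}\tilde S(\cF)=\frac{n}{n+1}\cdot\frac{(\bar{\cL}^{n})}{n\,\vol(L)}=\frac{(\bar{\cL}^{n})}{(n+1)\,\vol(L)}=\frac{1}{n+1}E^{\na}(\cV,\cL)\]
by the very definition of $E^{\na}(\cV,\cL)$ in the statement.

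To compute $\tilde S(\cF)$, I would exploit that $\cF$ is the filtration attached to $(\cV,\cL)$. Because $\cF$ is a $\ZZ$-filtration with $\cF^{\lambda}=R$ for $\lambda\le0$, the jumping numbers $a_{t,1}\le\dots\le a_{t,N_t}$ of $\cF$ on $H^0(V,tL)$ are non-negative integers and a straightforward computation gives $\sum_j a_{t,j}=\sum_{l\ge0}l\cdot\dim_{\KK}\mathrm{gr}^{l}_{\cF}H^0(V,tL)$, which is, up to sign, the total weight of the $\GG_m$-action on $H^0(\cV_0,t\cL_0)=\mathrm{gr}_{\cF}H^0(V,tL)$. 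By equivariant Riemann--Roch on $(\bar{\cV},\bar{\cL})$ --- equivalently, by the identification in \cite{BHJ17} of the non-Archimedean Monge--Amp\`ere energy with an asymptotic of such weights --- this sum has leading term $\frac{(\bar{\cL}^{n})}{n!}\,t^{n}$ (with $\bar{\cL}$ oriented as in \cite{BHJ17}), while $N_t=h^0(V,tL)=\frac{(L^{n-1})}{(n-1)!}t^{n-1}+O(t^{n-2})$ by orbifold Riemann--Roch. Dividing,
\[\tilde S(\cF)=\lim_{t\to\infty}\frac{1}{t\,N_t}\sum_j a_{t,j}=\frac{(\bar{\cL}^{n})/n!}{(L^{n-1})/(n-1)!}=\frac{(\bar{\cL}^{n})}{n\,\vol(L)},\]
which completes the proof.

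The delicate points all lie in the last two paragraphs. One must verify that the Rees construction produces a genuinely flat, ample test configuration whose central-fibre section spaces are exactly the associated graded pieces --- here the finite generation and linear boundedness from Proposition \ref{TCfiltration} are essential --- and one must keep consistent the sign and indexing conventions relating the jumping numbers of $\cF$, the $\GG_m$-weights on the central fibre, and the orientation of $\bar{\cL}$, so that the leading term above carries the sign asserted in the theorem. Everything else is the Faulhaber-type bookkeeping already carried out in the proofs of Theorem \ref{MAenergy} and Lemma \ref{volEqual}. An alternative route bypasses $\tilde S$ entirely: combine Theorem \ref{MAenergy} with a direct computation of $D_{-\eta}\vol_{\cX_0}(\xi)=(\bar{\cL}^{n})$ via the equivariant index on the cone $\cX_0=C(\cV_0,\cL_0)$.
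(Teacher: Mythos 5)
Your argument is correct and follows essentially the same route as the paper's proof: reduce to $E^{\na}(\cX,\xi,\eta)=S(\cF)=\tfrac{n}{n+1}\tilde S(\cF)$ via the definition of $E^{\na}$ and Lemma \ref{volEqual}, then identify $\tilde S(\cF)$ with the intersection-theoretic energy of the induced test configuration $(\cV,\cL)$ using \cite{BHJ17}. The only difference is that you spell out the Rees construction of $(\cV,\cL)$ and unpack the \cite{BHJ17} citation into the weight/jumping-number asymptotics, which the paper leaves implicit.
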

\begin{proof}It follows from the previous proposition and Lemma \ref{volEqual} that 
$$E^{\na}(\cX, \eta) = \frac{n}{n+1}\tilde{S}(\cF).$$ The fact that $\tilde{S}(\cF) = E^{\na}(\cV, \cL)$ is shown in \cite{BHJ17}
\end{proof}

\section{Monge-Amp\`ere energy of Fubini-Study functions}

In this section, we prove Theorem \ref{main} and express the Monge-Amp\`ere energy of a finitely generated filtration in terms of an integral on $X^{\an}$.
\subsection{Fubini-Study functions}

\begin{defn}
	A Fubini-Study function on $X^{an}$ is a function of the form
	%	\[\varphi = \sup \{\frac{\log|f_\alpha|-\log\|f_\alpha\|}{\la \xi, \alpha\ra}: f_\alpha\in R_\alpha, \alpha\in \Gamma\setminus \{0\}\},\]
	
	\[\varphi = \max_{1\leq j\le N} \{\frac{\log|f_j|+\lambda_j}{\la \alpha_j, \xi\ra}:   f_j\in R_{\alpha_j}\}\]
	
	where $\bigcap_{i=1}^N \{f_i = 0\} = \{0\}$, and $\lambda_j\in \RR$.

We will denote by $\mathrm{FS}(\xi)$ the set of Fubini-Study functions on $X^{\an}$ with polarization $\xi$.
\end{defn}
\begin{remark}
It's worth noting that $\mathrm{FS}(\xi)$ behaves differently from its analog in the global situation(see \cite[Proposition 2.9]{BJ18}). For example, when $\xi$ is irrational, given $\varphi,\psi \in \mathrm{FS}(\xi)$, $\varphi+\psi$ is no longer Fubini-Study. It also makes no sense to add $\varphi\in \mathrm{FS}(\xi)$ and $\psi\in \mathrm{FS}(\xi')$ for $\xi, \xi'$ linearly independent.
\end{remark}

\begin{lem}
	If $\chi$ is a $\TT(\KK)$-invariant norm on $R$ of finite type, then the function induced by $\chi$ given by
	\[\varphi = \sup_\alpha\{\frac{\log|f_\alpha|+\chi(f_\alpha)}{\la \alpha, \xi\ra}: \alpha\in \Lambda \mathrm{\ and \ } f_\alpha\in R_\alpha \}. \tag{$*$}\]
	is a Fubini-Study function.
	In particular, when $\chi$ is the trivial norm, we denote by $\varphi_\xi$ the corresponding Fubini-Study function 
	\[\varphi_\xi = \sup_{\alpha\in\Lambda}\{\frac{\log|f_\alpha|}{\la \alpha, \xi\ra}: f_\alpha\in R_\alpha\} \]
\end{lem}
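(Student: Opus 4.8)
The plan is to show that the sup in $(*)$ is actually attained by finitely many $f_\alpha$, so that the resulting function is a genuine finite max, and then to check that those finitely many $f_\alpha$ cut out exactly $\{0\}$ so that the result is a Fubini-Study function in the sense of the definition. First I would use the hypothesis that $\chi$ is of finite type: there is a finite set $A\subset\Lambda$ and elements $g_{\beta}\in R_{\beta}$, $\beta\in A$, such that every $f\in R$ has a representation $f=\sum a_{i_1\cdots i_p}g_{\beta_1}^{i_1}\cdots g_{\beta_p}^{i_p}$ with $\chi(f)=\min_{i_1\cdots i_p}\sum_j i_j\chi(g_{\beta_j})$. Expanding $\log|f_\alpha|$ using the ultrametric inequality, for $v\in X^{\an}$ we have $-v(f_\alpha)\le \max_{\text{monomials}}(-\sum_j i_j v(g_{\beta_j}))$, and combining with the finite-type formula for $\chi(f_\alpha)$ one gets, after dividing by $\la\alpha,\xi\ra$ (note $\la\alpha,\xi\ra=\sum_j i_j\la\beta_j,\xi\ra$ on each monomial), that
\[
\frac{\log|f_\alpha|(v)+\chi(f_\alpha)}{\la\alpha,\xi\ra}\ \le\ \max_{\beta\in A}\ \frac{\log|g_\beta|(v)+\chi(g_\beta)}{\la\beta,\xi\ra}.
\]
Since the reverse inequality holds by taking $f_\alpha=g_\beta$, this shows
\[
\varphi \;=\; \max_{\beta\in A}\ \frac{\log|g_\beta|+\chi(g_\beta)}{\la\beta,\xi\ra},
\]
a finite max over the generators indexed by $A$.

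Next I would arrange the zero-locus condition. The functions $\{g_\beta:\beta\in A\}$ need not cut out $\{0\}$ on their own, but after enlarging $A$ by finitely many homogeneous elements we may assume they generate an $\fm$-primary ideal: since the $\TT$-action is good, $0$ is the unique common zero of all homogeneous elements of positive weight, so $\sqrt{(R_\alpha:\alpha\in\Lambda\setminus\{0\})}=\fm$, and by Noetherianity finitely many homogeneous elements already have radical $\fm$. Adjoining these to the generating set $A$ (which does not change the value of the max, by the inequality above applied again) we obtain $\bigcap_\beta\{g_\beta=0\}=\{0\}$. Taking $\lambda_j=\chi(g_\beta)$ this exhibits $\varphi$ in exactly the required Fubini-Study form. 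For the final clause, specializing to the trivial norm $\chi\equiv 0$ on $R\setminus\{0\}$ gives $\varphi_\xi=\max_\beta \log|g_\beta|/\la\beta,\xi\ra$, which is Fubini-Study by the same reasoning.

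The main obstacle I anticipate is the bookkeeping in the displayed inequality: one must be careful that on each monomial $g_{\beta_1}^{i_1}\cdots g_{\beta_p}^{i_p}$ the weight is additive, $\la\alpha,\xi\ra=\sum_j i_j\la\beta_j,\xi\ra$, and that dividing a weighted average of the quantities $\log|g_{\beta_j}|+\chi(g_{\beta_j})$ (with weights $i_j\la\beta_j,\xi\ra$) by the total weight is bounded by the maximum of the ratios $(\log|g_{\beta_j}|+\chi(g_{\beta_j}))/\la\beta_j,\xi\ra$; the $\TT$-invariance of $\chi$ and of the norm $|\cdot|$ on semivaluations (so that $v(f_\alpha)=\min$ over the monomial terms) is what makes the estimate tight rather than merely an inequality. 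Once that is set up the statement follows, and the enlargement of the index set to enforce the $\fm$-primary condition is routine.
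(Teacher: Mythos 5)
Your proposal is correct and follows essentially the same route as the paper's proof: use the finite-type representation to bound each ratio $(\log|f_\alpha|+\chi(f_\alpha))/\la\alpha,\xi\ra$ by the corresponding max over the finite generating set $A$ (via the ultrametric inequality and the mediant estimate on weighted ratios), then adjoin finitely many further homogeneous elements to make the defining functions cut out an $\fm$-primary ideal without changing the max. Your justification that finitely many homogeneous elements suffice for the $\fm$-primary condition is a small amount of extra detail the paper leaves implicit, but the argument is the same.
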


\begin{proof}
	It suffices to show that we can take finitely  many $(f_\alpha)$ so that the sup in $(*)$ is a max.
	Let $A$ be a finite set of indices that determines $\chi$ as in Definition \ref{finitetype}.

	For each $f\in R_\beta$, by definition we can put $f$ into the form $f= \sum_{i_1, \cdots, i_p} a_{i_1\cdots i_p} g_{\beta_1}^{i_1}\cdots g_{\beta_p}^{i_p}$, with $\beta_j \in A$. Then 
	\[\beta = i_1\beta_1+\cdots i_p\beta_p,  \ \ \ \chi(f) = \min \sum_j i_j\chi(g_{\beta_j}), \]
	and we can further assume these $g_{\beta_j}$ form a basis for $\bigoplus_{\alpha\in A}R_\alpha$. Thus,
	\[\frac{\log|f|+\chi(f)}{\la \beta, \xi\ra} \leq \max_{i_1, \cdots, i_p}\{\frac{\log|g_{\beta_1}^{i_1}\cdots g_{\beta_p}^{i_p}|+\sum_ji_j\chi(g_{\beta_j})}{\la \beta, \xi\ra}\} \leq \max_{\beta_j}\{\frac{\log|g_{\beta_j}|+\chi(g_{\beta_j})}{\la \beta_j, \xi\ra}\},\]
	we have
	\[\sup_{\alpha\in\Lambda} \{\frac{\log|f_\alpha|+\chi(f_\alpha)}{\la \alpha, \xi\ra}\} = \max_{\beta\in A}\{\frac{\log|f_\beta|+\chi(f_\beta)}{\la \beta, \xi\ra}\}.\]
	
	By adding finitely many $(f_\alpha)_{\alpha\in A'}$ so that $(f_\beta)_{\beta\in A\cup A'}$ generate a $\fm$-primary ideal, we can write 
	\[\varphi = \max_{\alpha\in A\cup A'} \{\frac{\log|f_\alpha|+\chi(f_\alpha)}{\la \alpha, \xi\ra}\}. \]
\end{proof}

In what below, we follow the notation used in \cite{CLD12}.
\begin{prop}
	Let $\varphi$ be a Fubini-Study function. Then $\varphi|_{X^{\an}\setminus \{0\}}$ is a continuous $\xi$-equivariant psh-approachable function, that is, it's a continuous psh-approachable function, with the property that
	\[\varphi((t\xi)*v) = \varphi(v) -t, \forall t\in \RR, v\in X^{\an}.\]
\end{prop}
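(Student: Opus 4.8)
The plan is to verify the three asserted properties in turn: $\xi$-equivariance, continuity, and the psh-approachable property, the last being the substantive point.

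First I would handle $\xi$-equivariance. Writing $\varphi = \max_j \frac{\log|f_j| + \lambda_j}{\la \alpha_j, \xi\ra}$ with $f_j \in R_{\alpha_j}$, recall from the background section that $((t\xi)*v)(f_j) = v(f_j) + t\la \alpha_j, \xi\ra$, hence $\log|f_j((t\xi)*v)| = \log|f_j(v)| - t\la \alpha_j,\xi\ra$. Dividing by $\la\alpha_j,\xi\ra$ shifts each term in the max by $-t$, so $\varphi((t\xi)*v) = \varphi(v) - t$. This is an immediate computation. Continuity of $\varphi$ on $X^{\an}\setminus\{0\}$ follows because each $v\mapsto v(f_j)$ is continuous on $X^{\an}$ by definition of the topology, so each $\frac{\log|f_j|+\lambda_j}{\la\alpha_j,\xi\ra}$ is a continuous function valued in $\RR\cup\{-\infty\}$, and away from the common zero locus $\bigcap_j\{f_j=0\} = \{0\}$ at least one $f_j$ is nonvanishing at each point, so the finite max is real-valued and continuous.

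The main work is the psh-approachable claim. Following the example cited after the definition of psh-approachable, it suffices to exhibit, near each $v\in X^{\an}\setminus\{0\}$, a moment map $f$ and a convex function on the relevant polyhedron whose pullback is $\varphi$. I would take the moment map $f = (f_1,\dots,f_N)$ given by the generators themselves: since the $f_j$ cut out only $\{0\}$, on $X^{\an}\setminus\{0\}$ we may restrict to an open set where one fixed $f_{j_0}$ is invertible, and then locally $\varphi = \max_j\bigl(\frac{-T_j + \lambda_j}{\la\alpha_j,\xi\ra}\bigr)$ where $T_j = -\log|f_j|$ are the tropical coordinates; a max of affine functions of the tropical coordinates is a convex piecewise-affine function, so $\varphi$ is locally the pullback of a convex function under a moment map, hence psh-approachable. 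The one technical subtlety is that the $f_j$ need not be units globally, so one must genuinely localize: on a neighborhood $U$ of $v$ pick $j_0$ with $f_{j_0}(v)\neq \infty$, rescale by dividing the relevant expressions by $f_{j_0}$ (equivalently work with the ratios $f_j/f_{j_0}$, which are functions on $U$), and arrange the moment map from these ratios together with $f_{j_0}^{\pm 1}$ so that the hypotheses of the example apply.

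The step I expect to be the main obstacle is making the localization in the psh-approachable argument fully rigorous: the $f_j$ are regular functions, not invertible functions on $X$, and the Chambert-Loir--Ducros formalism requires moment maps built from invertible functions, so one must carefully choose the open cover of $X^{\an}\setminus\{0\}$ and, on each piece, replace $\varphi$ by an expression manifestly of the form (pullback of convex function) $\circ$ (moment map). Once the right local charts are set up this is routine, but it is where the care is needed; everything else reduces to the elementary computations with $v(f_j)$ and the definition of the Berkovich topology.
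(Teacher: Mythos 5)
Your overall strategy coincides with the paper's (which follows Proposition 6.3.2 of Chambert-Loir--Ducros): the $\xi$-equivariance and continuity computations are exactly the ones the paper uses, and psh-approachability is to be proved by exhibiting $\varphi$ locally as the pullback of a convex piecewise-affine function under a moment map. The gap is precisely in the step you flag as the main obstacle, and your proposed fix does not close it. The problem with the chart you sketch --- the full tuple $(f_1,\dots,f_N)$, or the ratios $f_j/f_{j_0}$ together with $f_{j_0}^{\pm 1}$ --- is not merely that the $f_j$ fail to be units globally; it is that any $f_j$ with $v(f_j)=\infty$ vanishes \emph{at the point $v$ itself}, hence on every neighborhood of $v$, and dividing by an invertible $f_{j_0}$ leaves $f_j/f_{j_0}$ vanishing at $v$ as well. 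So no tuple containing such a function is a moment map near $v$ in the Chambert-Loir--Ducros sense, and the corresponding $T_j=-\log|f_j|$ is not a real-valued tropical coordinate there. Rescaling by $f_{j_0}$ is the right move for sections of a line bundle on a projective variety, but it is the wrong tool here.

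The correct resolution, and the one the paper uses, is to \emph{discard} rather than rescale the offending terms. Reorder so that $f_1,\dots,f_r$ are nonvanishing at $v$ and $f_{r+1},\dots,f_p$ vanish at $v$. Since $\log|f_j|\ (j>r)$ tends to $-\infty$ at $v$ while $\log|f_i|\ (i\le r)$ stays bounded, one can choose an open neighborhood $U$ of $v$ on which every term $\frac{\log|f_j|+\lambda_j}{\la \alpha_j,\xi\ra}$ with $j>r$ is strictly dominated by some term with index $i\le r$; on such a $U$ one has $\varphi|_U=\max_{1\le i\le r}\frac{\log|f_i|+\lambda_i}{\la \alpha_i,\xi\ra}$, the functions $f_1,\dots,f_r$ are invertible on $U$ and define an honest moment map $U\to \GG_m^{r,\an}$, and $\varphi|_U$ is the pullback under its tropicalization of the convex function $\max_{i}\frac{-x_i+\lambda_i}{\la \alpha_i,\xi\ra}$ on $\RR^r$. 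With this substitution in place of the ratio construction, your argument is complete.
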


\begin{proof}
	The proof goes along the same lines as in \cite[Proposition 6.3.2]{CLD12}. Let
	\[\varphi = \max{\left\{\frac{\log|f_{1}| +\lambda_1}{\la \alpha_1, \xi\ra}, \cdots, \frac{\log|f_{p}|+\lambda_p}{\la \alpha_p,\xi\ra}\right\}}.\]
	
	Given $x \in X^{\an}\setminus\{0\}$, assume $f_1(x), \cdots, f_r(x)\neq 0$ and $f_{r+1}(x) = \cdots = f_p(x) = 0$ at $x$. Then after passing to the neighborhood 
	\[U:= \left\{x: \min_{1\leq i\leq r}{\{\log|f_i(x)|\} > \max_{r+1\leq j\leq p}{\{x: \log|f_j(x)|\}}}\right\},\]
	$$\varphi|_U = \max\left\{\frac{\log|f_{1}| +\lambda_1}{\la \alpha_1, \xi\ra}, \cdots, \frac{\log|f_{r}|+\lambda_r}{\la \alpha_r, \xi\ra}\right\}.$$
	Now $f_1, \cdots, f_r$ are invertible functions on $U$, and hence give a moment map $U\to \GG_m^r$ by $v\mapsto (f_1(v), \cdots, f_r(v))$. Since $\max(x_1, \cdots, x_r)$ is a continuous and convex function on $\RR^r$, this shows that $\varphi$ is psh-approachable.
	The $\xi$-equivariance follows from the fact that $(t\xi* v) (f) = \min\limits_{\alpha: f_\alpha\neq 0} \{v(f_\alpha)+ t\la \alpha, \xi\ra\}, $ where $f = \sum_\alpha f_\alpha$.
	
\end{proof}

\subsection{Monge-Amp\`ere measures}
In view of the previous proposition, given Fubini-Study functions $\varphi_1, \cdots, \varphi_n$, they are psh-approachable and hence define a positive $(n, n)$-current $d'd''\varphi_1\wedge \cdots\wedge d'd''\varphi_n$ in the Bedford-Taylor sense.

\begin{prop}\label{SuppMA}
Let $\varphi_1, \cdots, \varphi_n$ be Fubini-Study functions. Then  
 $$d'd''\varphi_1 \wedge\cdots \wedge  d'd''\varphi_n = 0$$
  on $X^{\an}\setminus\{0\}$.
	\end{prop}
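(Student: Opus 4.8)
The plan is to exploit the $\xi$-equivariance of Fubini--Study functions established in the preceding proposition: each $\varphi_i$ satisfies $\varphi_i((t\xi)*v)=\varphi_i(v)-t$, so the $\varphi_i$ are all ``affine of the same slope'' along the $\xi$-flow on $X^{\an}\setminus\{0\}$, and morally they descend to the $(n-1)$-dimensional quotient by that flow. The wedge of $n$ operators $d'd''$ of functions supported on an $(n-1)$-dimensional space must vanish; the real work is to phrase this inside the Chambert--Loir--Ducros formalism. The statement is local, so I would fix $x\in X^{\an}\setminus\{0\}$ and work near it. Since $x\neq 0$, there is $\alpha\in\Lambda$ with $d:=\la\alpha,\xi\ra>0$ and $h\in R_\alpha$ with $h(x)\neq 0$. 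Writing $\varphi_i=\max_j\frac{\log|f_{ij}|+\lambda_{ij}}{\la\alpha_{ij},\xi\ra}$, I choose a neighborhood $U\ni x$ on which $h$ is invertible and on which, for each $i$, only the non-vanishing terms of $\varphi_i$ contribute to the maximum, so $\varphi_i|_U$ is the pullback of a convex piecewise-affine function along the moment map formed from those $f_{ij}$, exactly as in the proof of the previous proposition. Crucially, the conditions cutting out $U$ are comparisons between the $\xi$-equivariant quantities $\frac{\log|f|+\lambda}{\la\cdot,\xi\ra}$, hence $\xi$-flow invariant; so $U$ may be taken $\xi$-flow invariant, and $X^{\an}\setminus\{0\}$ is itself $\xi$-flow invariant.

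Next I would set $\psi_i:=\varphi_i-\tfrac1d\log|h|$ on $U$. As $\log|h|$ is pluriharmonic for invertible $h$, i.e. $d'd''\log|h|=0$, one gets $d'd''\varphi_i=d'd''\psi_i$, so it is enough to prove $d'd''\psi_1\wedge\cdots\wedge d'd''\psi_n=0$ on $U$. Both $\varphi_i$ and $\tfrac1d\log|h|$ transform by $(\cdot)-t$ under $v\mapsto(t\xi)*v$ (the latter because $((t\xi)*v)(h)=v(h)+td$), hence each $\psi_i$ is $\xi$-flow invariant. Assembling a single moment map $g$ on $U$ from $h$ together with all the relevant $f_{ij}$'s, with $g_{\trop}\colon U\to\RR^P$, I obtain $\psi_i=g_{\trop}^*w_i$ for convex functions $w_i$ on $\RR^P$; the $\xi$-flow acts on the tropical side by translation by the vector $u\in\RR^P$ whose slots are the numbers $\la\text{weight},\xi\ra$, and $u\neq 0$ because its $h$-slot equals $d>0$. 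Invariance of $\psi_i$ then forces $w_i=\bar w_i\circ q$, where $q\colon\RR^P\to\RR^P/\RR u\cong\RR^{P-1}$ is the quotient projection, whence
\[d'd''\psi_1\wedge\cdots\wedge d'd''\psi_n=g_{\trop}^*q^*\bar\Theta,\qquad \bar\Theta:=d'd''\bar w_1\wedge\cdots\wedge d'd''\bar w_n,\]
with $\bar\Theta$ an $(n,n)$-supercurrent on $\RR^{P-1}$.

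To conclude I would use that an $(n,n)$-form of the shape $g_{\trop}^*\beta$ is determined by its pairing with the volume $(n,n)$-multivector on the $n$-dimensional part $\Pi=g_{\trop}(\Sigma_g^{(n)})$ of the image of the characteristic polyhedron. Since $U$ is $\xi$-flow invariant and the flow is translation by $u$, the polyhedron $\Sigma_g$, its $n$-skeleton, and the image $\Pi$ are invariant under translation by $u$; an at most $n$-dimensional polyhedral set containing every $\RR u$-direction is locally a product $(\text{interval})\times Q$ with $\dim Q\le n-1$, so its volume multivector carries a factor in the $u$-direction, whereas $q^*\bar\Theta$ — pulled back along the projection that kills $u$ — has no $u$-component. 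The pairing $\la q^*\bar\Theta,\mu_\Pi\ra$ is therefore identically $0$, so $g_{\trop}^*q^*\bar\Theta=0$ on $U$; as $x$ was arbitrary, the current vanishes on all of $X^{\an}\setminus\{0\}$.

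I expect the main obstacle to be precisely this last step: verifying inside the Chambert--Loir--Ducros machinery that the characteristic polyhedron (equivalently, the image of $g_{\trop}$ on the $n$-skeleton) genuinely inherits the $u$-translation invariance from the $\xi$-flow, so that its ``cylindrical'' structure forces the pulled-back top form to vanish. The remaining ingredients — locality, the pluriharmonicity of $\log|h|$, the reduction to $\xi$-flow-invariant $\psi_i$, and the equivariance bookkeeping — are routine.
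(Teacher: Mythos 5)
Your proof is correct and takes essentially the same route as the paper: both exploit the $\xi$-equivariance $\varphi_i((t\xi)*v)=\varphi_i(v)-t$ to show that the $n$-fold product degenerates on each $n$-dimensional cell of the characteristic polyhedron, because that cell contains the tropical translation direction $u$ of the $\xi$-flow while the (suitably normalized) functions are affine along $u$. The paper implements this by smoothing with a regularized max and noting that the Hessian of $\psi_\varepsilon$ is degenerate on an $n$-cell on which it is linear in one direction, whereas you subtract the pluriharmonic $\tfrac1d\log|h|$ and factor through the quotient $\RR^P/\RR u$ before pairing with the cylindrical volume multivector --- a repackaging of the same degeneracy; the cylindricity of the $n$-skeleton that you flag as the main obstacle is likewise the (lightly justified but true) crux of the paper's own argument.
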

For the proof of this proposition, we need the following result from \cite[Proposition 6.3.2]{CLD12}(see also \cite[Lemme 5.18]{Demailly}) which gives a regularized max function:
\begin{lem}\label{SmApprox}
	Let $p$ be a positive integer, and $\eta>0$ a positive real number. There is a function $M_\eta: \RR^p \to \RR$ satisfying the following properties:
	
	\begin{enumerate}
		\item $M_\eta$ is smooth, convex, and increasing in each variable;
		\item For all $(x_1, \cdots, x_p)\in\RR^p$,
		\[\max(x_1, \cdots, x_p) \le M_\eta(x_1, \cdots, x_p)\le \max(x_1,\cdots, x_p)+\eta;\]
		\item If $x_i+\eta \le \max_{j\neq i} (x_j-\eta)$, then 
		\[M_\eta(x_1, \cdots, x_p) = M_\eta(x_1, \cdots,\hat{x}_i, \cdots, x_p);\]
		\item For all $t\in \RR$, 
		\[M_\eta(x_1+t, \cdots, x_p+t) = M_\eta(x_1, \cdots, x_p)+t.\]
	\end{enumerate}
\end{lem}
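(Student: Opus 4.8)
The plan is to build $M_\eta$ by mollifying the ordinary maximum against a product of scaled bump functions, which is the construction of Demailly reproduced in \cite{CLD12}. Fix a smooth even function $\theta\colon \RR\to[0,\infty)$ supported in $[-1,1]$ with $\int_\RR\theta=1$, and for $\eta>0$ set $\theta_\eta(s)=\eta^{-1}\theta(\eta^{-1}s)$, so that $\theta_\eta$ is even, supported in $[-\eta,\eta]$, and integrates to $1$. I would then define
\[
M_\eta(x_1,\dots,x_p)=\int_{\RR^p}\max(x_1+t_1,\dots,x_p+t_p)\,\prod_{j=1}^p\theta_\eta(t_j)\,dt_1\cdots dt_p,
\]
and read off the four properties from this single formula; there is no deep estimate, so the work lies in bookkeeping rather than in any one hard step. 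For property (1), after the substitution $s_j=x_j+t_j$ the entire $x$-dependence sits in the smooth compactly supported kernel $\prod_j\theta_\eta(s_j-x_j)$ while $\max(s_1,\dots,s_p)$ is locally integrable, so differentiating under the integral sign arbitrarily often gives $M_\eta\in C^\infty$. Convexity and monotonicity are inherited from the integrand: for each fixed $t$ the map $x\mapsto\max_j(x_j+t_j)$ is convex as a maximum of affine functions and non-decreasing in each coordinate, and both properties survive averaging against the non-negative probability measure $\prod_j\theta_\eta(t_j)\,dt$. If one wants strictness, note that $\partial M_\eta/\partial x_i$ equals the measure of the set of $t$ on which index $i$ attains the maximum, which is positive exactly when $x_i+\eta>\max_{j\ne i}(x_j-\eta)$.

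Property (2) is the one point where I would be careful to use more than the support bound. The upper bound is immediate: on the support $t_j\le\eta$, hence $\max_j(x_j+t_j)\le\max_j x_j+\eta$, and integrating gives $M_\eta\le\max+\eta$. For the lower bound the support alone would only give $\max-\eta$, so instead I would apply Jensen's inequality for the convex function $\max$ against the measure $\prod_j\theta_\eta(t_j)\,dt$, whose barycenter has vanishing $t_j$-coordinates because $\theta_\eta$ is even; this yields $M_\eta(x)\ge\max(x_1,\dots,x_p)$, matching the stated bound exactly. Property (4) follows by pulling the common shift $t$ out of the maximum and using $\int_{\RR^p}\prod_j\theta_\eta=1$, giving $M_\eta(x_1+t,\dots,x_p+t)=M_\eta(x)+t$.

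The property that genuinely uses the precise size of the support is (3), and I regard it as the main point to keep straight rather than a true obstacle. Assuming $x_i+\eta\le\max_{j\ne i}(x_j-\eta)$, I observe that for every $t$ in the support one has $x_i+t_i\le x_i+\eta\le\max_{j\ne i}(x_j-\eta)\le\max_{j\ne i}(x_j+t_j)$, the last step because $t_j\ge-\eta$. Thus index $i$ never attains the maximum and can be deleted from the integrand pointwise, and integrating out the now-free variable $t_i$ (which contributes a factor $\int_\RR\theta_\eta=1$) collapses the integral to the $(p-1)$-variable regularized maximum in the remaining coordinates, which is exactly $M_\eta(x_1,\dots,\hat{x}_i,\dots,x_p)$. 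The unifying observation I would stress is that the same mollifier delivers everything: both the forgetting property (3) and the translation property (4) hinge on the kernel being a product of factors each integrating to one, so I would present the construction first and then obtain all four consequences as short verifications.
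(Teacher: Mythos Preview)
Your proposal is correct and is exactly the standard Demailly mollification that the paper cites; the paper itself does not prove this lemma but simply quotes it from \cite[Proposition 6.3.2]{CLD12} and \cite[Lemme 5.18]{Demailly}. Your write-up supplies precisely the details behind those references, including the use of Jensen's inequality (via the evenness of $\theta_\eta$) to get the sharp lower bound in (2).
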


\begin{proof}[Proof of Proposition \ref{SuppMA}]
   We first show that $(d'd''\varphi)^n = 0$ for one Fubini-Study function $\varphi$.
    
    Let $x\in X^{\an}\setminus\{0\}$. As observed in the previous proposition, we can assume that $\varphi$ takes the form 
	\[ \varphi = \max \left\{\frac{\log|f_{1}|+\lambda_1}{\la \alpha_1, \xi\ra}, \cdots, \frac{\log|f_{p}|+\lambda_p}{\la \alpha_p, \xi\ra}\right\} = f_{\trop}^*\psi\]
	in some neighborhood $U$ of $x$ in $X^{\an}\setminus\{0\}$, where $f = (f_1, \cdots, f_p): U\to \GG_m^p$ gives a moment map, $\psi: \RR^p\to \RR$ is given by $\psi(x_1, \cdots, x_p) = \max \left\{\frac{-x_1+\lambda_1}{\la \alpha_1, \xi\ra}, \cdots, \frac{-x_p+\lambda_p}{\la \alpha_p, \xi\ra}\right\}.$ In view of the lemma above, we have a sequence of smooth psh functions $\varphi_\varepsilon = f_{\trop}^*\psi_\varepsilon$, where $\psi_\varepsilon(x_1, \cdots, x_p) := M_\varepsilon(\frac{-x_1+\lambda_1}{\la \alpha_1, \xi\ra}, \cdots, \frac{-x_p+\lambda_p}{\la \alpha_p, \xi\ra})$, converging uniformly to $\varphi$. Moreover, $\varphi_\varepsilon$ preserves $\xi-$equivariance in the sense that 
	\begin{align*}
	\varphi_\varepsilon((t\xi)* v) &= f_{\trop}^*\psi_\varepsilon(x_1+t\la \alpha_1, \xi\ra, \cdots, x_p+t\la \alpha_p, \xi\ra) \\
	&= f_{\trop}^*\psi_\varepsilon(x_1, \cdots, x_p)-t = \varphi_\varepsilon(v)-t.
	\end{align*}
	By \cite[Corollary 5.6.5]{CLD12}, we have $(d'd''\varphi)^n = \lim\limits_{\varepsilon\to 0}(d'd''\varphi_\varepsilon)^n$ in $U$.
	
	Let $y$ be any point in $U$. If the tropical dimension of $y$ is less than $n$, then there is a compact analytic neighborhood $V\subset U$ of $y$ such that $f_{\trop}(V)$ has dimension less than $n$. In this case, $(d'd''\varphi_\varepsilon)^n = 0$ on $V, \forall \varepsilon >0$. So we may assume that $y\in \Sigma_f^{(n)}$, and in particular, we will assume $p\geq n$.

	Let $u$ be a compactly supported smooth function on $U$ with support contained in the union of $n$-cells containing $x$. Now for a convenient cell decomposition $\cC$ of $\Sigma_f^{(n)}$, we have
	\[\int_U u(d'd''\varphi)^n = \lim_{\varepsilon\to 0+}\int_U u (d'd''\varphi_\varepsilon)^n = \lim_{\varepsilon\to 0+}\sum_{C\in \cC \ \dim C = n \ x\in C}\int_C u (d'd''\varphi_\varepsilon)^n\]
	
	For a fixed cell $C\in \cC$ containing $x$, if $y\in \inte(C)$, then $(t\xi)*y\in \inte(C)$ for $0<t\ll 1$ as well since $\inte(C)$ is open in $\Sigma_f$. Set $D = f_{\trop} (C)$. Then $f_{\trop}$ is a homeomorphism between $C$ and $D$. Let $\tilde{u}$ be the function on $D$ such that $u|_C = f_{\trop}^*(\tilde{u})$. Then 
	\[\int_C u(d'd''\varphi_\varepsilon)^n = \int_D \tilde{u}\la (d'd''\psi_\varepsilon)^n, \mu_D\ra,\]
	with $\psi_\varepsilon$ satisfying $\psi_\varepsilon(x_1+t\la\alpha_1, \xi\ra, \cdots, x_p+t\la \alpha_p, \xi\ra) = \psi_\varepsilon(x_1, \cdots, x_p)-t$ on $\inte(D)$ for $0<t\ll 1$. Since $\dim D=n$, and $\psi_\varepsilon$ is linear in one direction, the Hessian of $\psi_\varepsilon$ has to be zero. Thus we can conclude that $(d'd''\varphi)^n =0$ in a neighborhood of $x$. 
	
	Now for different Fubini-Study functions $\varphi_1, \cdots, \varphi_n$, we may assume every $\varphi_i$ is of the form 
	\[ \varphi_i = \max \left\{\frac{\log|f_{1}|+\lambda_{1i}}{\la \alpha_1, \xi\ra}, \cdots, \frac{\log|f_{p}|+\lambda_{pi}}{\la \alpha_p, \xi\ra}\right\},\]
	by possibly adding some terms of the form $\frac{\log|f_{j}|+\lambda_{ji}}{\la \alpha_j, \xi\ra}$ with $\lambda_{ji}\ll 0$. Then we proceed as before repeating the same argument.

\end{proof}

From now on, we will write $\varphi_\xi^+ = \max\{\varphi_\xi, 0\}$. 

\begin{cor}\label{measureSupp}
	For any Fubini-Study function $\varphi_1,\cdots, \varphi_{n-1}$, the measure 
	$$d'd''\varphi_1\wedge \cdots \wedge d'd''\varphi_{n-1}\wedge d'd''\varphi_\xi^+$$
	is a measure with finite support on $\TT$-invariant points of $\{\varphi_\xi = 0\}\subset X^{\an}$.
\end{cor}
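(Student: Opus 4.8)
The plan is to establish, for the positive measure $\mu := d'd''\varphi_1\wedge\cdots\wedge d'd''\varphi_{n-1}\wedge d'd''\varphi_\xi^+$, the three assertions of the statement in turn: that $\operatorname{supp}(\mu)\subseteq\{\varphi_\xi=0\}$, that this support is finite, and that each of its points is $\TT$-invariant. First note $\mu$ is well defined on all of $X^{\an}$: $\varphi_\xi^+=\max\{\varphi_\xi,0\}$ is psh-approachable, being locally $\varphi_\xi$, the constant $0$, or the pullback of a convex piecewise-linear function under a moment map, and where $\varphi_\xi<0$ (a neighbourhood of the cone point $0$, since $\varphi_\xi\to-\infty$ there) it is the constant $0$ so $d'd''\varphi_\xi^+=0$ and $\mu=0$. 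For the support: on the open set $\{\varphi_\xi>0\}$ one has $\varphi_\xi^+=\varphi_\xi$ locally, a Fubini--Study function, so there $\mu=d'd''\varphi_1\wedge\cdots\wedge d'd''\varphi_{n-1}\wedge d'd''\varphi_\xi$, which vanishes on $X^{\an}\setminus\{0\}$ by Proposition \ref{SuppMA}; on $\{\varphi_\xi<0\}$ it vanishes as just noted. Hence $\operatorname{supp}(\mu)$ lies in the closed set $\{\varphi_\xi=0\}\subset X^{\an}\setminus\{0\}$, which by the $\xi$-equivariance $\varphi_\xi((t\xi)*v)=\varphi_\xi(v)-t$ meets each Reeb orbit in exactly one point.

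For finiteness, fix $x\in\{\varphi_\xi=0\}$ and, as in the proof of Proposition \ref{SuppMA}, choose a common moment map $f=(f_1,\dots,f_p)$ on a neighbourhood $U$ so that $\varphi_i=f_{\trop}^*\psi_i$ and $\varphi_\xi=f_{\trop}^*\psi_\xi$ with the $\psi_i,\psi_\xi$ convex piecewise-linear on $f_{\trop}(U)\subseteq\RR^p$ and, by $\xi$-equivariance, $\psi_\xi$ affine of slope $-1$ along $e:=(\la\alpha_1,\xi\ra,\dots,\la\alpha_p,\xi\ra)$. Then $\psi_\xi^+=\max\{\psi_\xi,0\}$ is convex piecewise-linear, affine along $e$ on each side of the graph $\{\psi_\xi=0\}$ with a single convex break there; so, passing to regularized maxima and applying \cite[Corollary 5.6.5]{CLD12} as in loc.\ cit., the factor $d'd''\varphi_\xi^+$ consumes the Reeb direction: the resulting $(n,n)$-current is the restriction to the $(n-1)$-dimensional polyhedral slice $\{\varphi_\xi=0\}\cap\Sigma_f^{(n)}$ of the mixed Monge--Amp\`ere measure of the $n-1$ convex piecewise-linear functions $\varphi_i$, hence an atomic measure carried by the vertices of a polyhedral subdivision. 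Since $\cF$ is finitely generated, these local pictures are pieces of one global polyhedral structure (equivalently, $\{\varphi_\xi=0\}$ is compact), so there are only finitely many such vertices and $\mu$ has finite support.

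For $\TT$-invariance, observe that every Fubini--Study function and $\varphi_\xi$ involves only $\log|g|$ with $g$ homogeneous, and for $a\in\TT(\KK)$ and $g\in R_\beta$ one has $(a\cdot v)(g)=v(a^\beta g)=v(g)$ because $a^\beta\in\KK^*$ has $v(a^\beta)=0$; thus $\varphi_1,\dots,\varphi_{n-1},\varphi_\xi,\varphi_\xi^+$ are all $\TT(\KK)$-invariant functions on $X^{\an}$, and since $\TT(\KK)$ acts by analytic automorphisms the measure $\mu$ is $\TT(\KK)$-invariant. Consequently $\operatorname{supp}(\mu)$ is a finite $\TT(\KK)$-stable set; but $\TT(\KK)\cong(\KK^*)^r$ is a divisible abelian group ($\KK$ being algebraically closed of characteristic $0$) and hence has no proper subgroup of finite index, so every finite $\TT(\KK)$-orbit is a single point. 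Therefore each atom of $\mu$ is fixed by $\TT(\KK)$, which by the lemma characterizing $\TT$-invariant semivaluations means it lies in $X^{\an,\TT}\cap\{\varphi_\xi=0\}$.

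The substantive step is finiteness: one has to make precise, within the Chambert-Loir--Ducros calculus, the claim that wedging with $d'd''\varphi_\xi^+$ collapses the Reeb direction and produces the mixed Monge--Amp\`ere measure of the $\varphi_i$ on the $(n-1)$-dimensional slice $\{\varphi_\xi=0\}$ — which requires tracking the characteristic polyhedra of the regularized approximants and handling the non-flatness of $\{\varphi_\xi=0\}$ — and to pin down the single global polyhedral structure (or the compactness of $\{\varphi_\xi=0\}$) bounding the number of atoms. The other two steps are direct consequences of Proposition \ref{SuppMA} and of elementary group theory, respectively.
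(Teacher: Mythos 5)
Your overall strategy coincides with the paper's: localize the support to $\{\varphi_\xi=0\}$ via Proposition \ref{SuppMA}, show local atomicity with the Chambert-Loir--Ducros calculus, use compactness of $\{\varphi_\xi=0\}$ for global finiteness, and deduce $\TT$-invariance from divisibility of $\TT(\KK)$. The first and last steps are carried out correctly (your $\TT$-invariance argument, via invariance of the measure itself, is in fact slightly more explicit than the paper's), and your reduction on $\{\varphi_\xi>0\}$ and $\{\varphi_\xi<0\}$ is exactly right.

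However, the finiteness step is a genuine gap, and you acknowledge it yourself: the assertion that ``$d'd''\varphi_\xi^+$ consumes the Reeb direction and produces the mixed Monge--Amp\`ere measure of the $\varphi_i$ on the slice $\{\varphi_\xi=0\}$, hence an atomic measure'' is precisely the content that has to be proved, and it occupies the bulk of the paper's argument. Concretely, the paper (i) applies Green's theorem to $\langle d'd''\varphi_\xi^+, u(d'd''\varphi_\varepsilon)^{n-1}\rangle$ to trade the wedge with $d'd''\varphi_\xi^+$ for a boundary integral of $-d''\varphi_{\xi,\varepsilon'}\wedge u(d'd''\varphi_\varepsilon)^{n-1}$ over the $(n-1)$-faces of the characteristic polyhedron lying in $\{\varphi_\xi=0\}$, after checking that the other two terms vanish in the limit; (ii) proves a separate lemma identifying $\lim_n [\psi=0]\wedge d''(-\psi_n)$ with a sum of currents $[V_i]\wedge d''(-v_i\cdot x)$, which justifies passing from the boundary term to $\int_G v\,(d'd''(\psi_\varepsilon|_{\{x_j=0\}}))^{n-1}$; (iii) invokes \cite[Lemme 5.7.4]{CLD12} for the atomicity of the resulting $(n-1)$-fold product of piecewise-linear convex functions on the $(n-1)$-dimensional face; and (iv) handles distinct $\varphi_1,\dots,\varphi_{n-1}$ by a polarization argument, expanding $(d'd''(t_1\psi_1+\cdots+t_{n-1}\psi_{n-1}))^{n-1}\wedge d''x_j$. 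None of (i)--(iv) appears in your proposal. Likewise, the compactness of $\{\varphi_\xi=0\}$ is asserted rather than proved; the paper's argument is short but not tautological --- embed $X$ into $\AA^N$ by homogeneous generators and observe $\{\varphi_\xi\le 0\}\subset\bigcap_\alpha\{\log|f_\alpha|\le 0\}$, a compact polydisk --- and does not rely on finite generation of $\cF$, as your phrasing suggests. To complete the proof you would need to supply these computations.
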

We first record a lemma that will be needed in the proof.
\begin{lem}
    Let $\psi: \RR^k\to \RR$ be a function of the form $\psi = \max\{v_1\cdot x, \cdots, v_l\cdot x\}$, where $v_1, \cdots, v_l $ are nonzero vectors in $\RR^k$, and $\cdot $ denotes the usual inner product in $\RR^k$. Let $\psi_n$ be a sequence of smooth convex functions converging locally uniformly to $\psi$. Let $V_i = \{v_i\cdot x=0\}$. Then 
    \[\sum_{i=1}^l [V_i]\wedge d''(-v_i\cdot x) = \lim_{n\to \infty} [\psi = 0]\wedge d''(-\psi_n)\]
    as $(1,1)$-supercurrents of on $\RR^k$.
\end{lem}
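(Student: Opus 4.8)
The plan is to reduce the claimed identity of supercurrents to a purely local and piecewise-linear statement on $\RR^k$, and then compute. First I would observe that both sides are $(1,1)$-supercurrents, so it suffices to test against an arbitrary compactly supported smooth $(k-1,k-1)$-form $\alpha$ on $\RR^k$ and to work cell by cell on the polyhedral subdivision of $\RR^k$ induced by the linear forms $v_1\cdot x,\dots,v_l\cdot x$; on the top-dimensional cells the function $\psi$ is linear, hence $d''(-\psi)$ is (locally) the restriction of a constant super-$1$-form and the wedge with $[\psi=0]$ is supported on the union of the hyperplanes $V_i$. The key point is that $[\psi=0]$ is a polyhedral current supported on $\bigcup_i V_i$, and along the relative interior of each face of $\{\psi=0\}$ lying in $V_i$, the function $\psi$ agrees with $v_i\cdot x$ in a one-sided neighborhood (because $v_i\cdot x$ is the active linear piece there), so the ``slope jump'' of $\psi$ across that face, which is what $d''(-\psi)$ records as a current on $\{\psi=0\}$, is exactly $d''(-v_i\cdot x)$ restricted to that face. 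Summing over faces grouped by which $V_i$ contains them reassembles the right-hand side $\sum_i [V_i]\wedge d''(-v_i\cdot x)$.

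For the convergence statement I would use the Bedford--Taylor-type continuity already invoked in the paper: since the $\psi_n$ are smooth convex and converge locally uniformly to the convex function $\psi$, we have $d'd''\psi_n \to d'd''\psi$ weakly, and more to the point the mixed expressions stabilize, so $[\psi=0]\wedge d''(-\psi_n)$ — which one should read as the localization of $d'd''(\max\{\psi,0\})\cdot(\text{something})$, or directly via Green's formula (Theorem from \cite{CLD12} quoted above) applied on a neighborhood of $\{\psi=0\}$ — converges to $[\psi=0]\wedge d''(-\psi)$. Concretely, I would write $\max\{\psi_n,0\}$ and note $d'd''\max\{\psi_n,0\} = [\psi_n=0]\wedge d''(-\psi_n)$ up to the $(1,1)$-part coming from the Hessian of $\psi_n$, which vanishes in the limit on the locus where $\psi$ is piecewise linear with distinct active pieces; then pass to the limit using local uniform convergence plus the fact that the supports $\{\psi_n=0\}$ converge (in the Hausdorff sense on compacta) to $\{\psi=0\}$.

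The main obstacle I anticipate is bookkeeping the orientations and the transversality of the hyperplanes $V_i$: if several of the $v_i$ are proportional, or if some face of $\{\psi=0\}$ lies in the intersection of several $V_i$, one must check that the contributions add with the correct (super)signs and that lower-dimensional overlaps contribute nothing, so that the naive sum $\sum_i [V_i]\wedge d''(-v_i\cdot x)$ is the right answer and not an overcount. I would handle this by first treating the generic case (the $v_i$ pairwise non-proportional and in ``general position'', so the corner locus has codimension $\ge 2$ and is invisible to a $(1,1)$-current), and then reducing the general case to the generic one by a small perturbation of the $v_i$ combined with the continuity of both sides in the $v_i$ — the left-hand side is continuous because convex functions depend continuously on their defining data and Monge--Amp\`ere-type operators are continuous along monotone/uniform limits, and the right-hand side is manifestly continuous in each $v_i$. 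A secondary, more technical point is justifying the interchange of the limit in $n$ with the cell-by-cell integration; this follows from dominated convergence once one notes the integrands are uniformly bounded on the compact support of the test form.
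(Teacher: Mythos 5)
Your route is genuinely different from the paper's. The paper never computes the boundary current face by face: it pairs $d'd''\max\{0,\psi\}$ with a test form $\alpha$, writes $\la d'd''\max\{0,\psi\},\alpha\ra=\int_{\{\psi\ge0\}}\psi\,d'd''\alpha=\lim_n\int_{\{\psi\ge0\}}\psi_n\,d'd''\alpha$ by local uniform convergence, applies Green's theorem on the \emph{fixed} region $\{\psi\ge0\}$ to split this into the bulk term $\int_{\{\psi\ge0\}}\alpha\,d'd''\psi_n$ plus the boundary term $\int^\partial(-\alpha\, d''\psi_n)$ (the other boundary term $\int^\partial\psi_n\,d''\alpha$ dies because $\psi=0$ there), and then quotes Lagerberg's formula for $d'd''$ of a piecewise linear convex function to identify $\sum_i[V_i]\wedge d''(-v_i\cdot x)$ with exactly the difference $\la d'd''\max\{0,\psi\},\alpha\ra-\lim_n\int_{\{\psi\ge0\}}\alpha\,d'd''\psi_n$. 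The advantage is that every limit taken is a bulk integral controlled by uniform convergence of $\psi_n$; the restriction of $d''\psi_n$ to a hypersurface is never analyzed. Your cell-by-cell computation is more explicit and avoids the Lagerberg black box, but it puts the entire analytic burden on the boundary.

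That is where the one real gap sits. The justification you give for the convergence --- ``Bedford--Taylor-type continuity'', i.e.\ weak convergence $d'd''\psi_n\to d'd''\psi$ --- does not suffice: $[\psi=0]\wedge d''(-\psi_n)$ involves the restriction of the first-order form $d''\psi_n$ to the Lebesgue-null set $\{\psi=0\}$, and weak convergence of second derivatives gives no control over such a restriction. (Your fallback via $d'd''\max\{\psi_n,0\}$ and Hausdorff convergence of $\{\psi_n=0\}$ also fails in general: $\{\psi_n=0\}$ can be empty for every $n$ if the smooth convex approximants stay strictly positive.) The statement you actually need, and which your ``slope jump'' picture tacitly uses, is the classical fact that for convex $\psi_n\to\psi$ locally uniformly one has $\nabla\psi_n\to\nabla\psi$ locally uniformly on the open set where $\psi$ is differentiable; the relative interior of each facet of $\{\psi=0\}$ lying in $V_i$, minus a codimension-two locus, consists of such points with $\nabla\psi=v_i$, and the uniform local Lipschitz bound on the $\psi_n$ then allows dominated convergence over the $(k-1)$-dimensional faces. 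With that substitution your perturbation step becomes unnecessary (codimension-two overlaps are invisible to the face integration), though note that if two of the $v_i$ are positively proportional the asserted identity overcounts, so general position is implicitly assumed in the lemma itself; note also that your computation naturally produces the faces $V_i\cap\{\psi=0\}$ rather than the full hyperplanes, which is how the left-hand side must be read.
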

\begin{proof}
Let $\alpha$ be a smooth compactly supported $(n-1,n-1)$-form on $\RR^k.$ 
Then 
\begin{align*}
   \la d'd''\max{\{0,\psi\}}, \alpha\ra &= \int_{\RR^k} \max{\{0,\psi\}} d'd''\alpha = \int_{\{\psi\geq 0\}} \psi d'd''\alpha = \lim_{n\to \infty} \int_{\{\psi\geq 0\}} \psi_n d'd''\alpha\\
   &=\lim_{n\to \infty} \left(\int_{\{\psi\geq 0\}} \alpha d'd''\psi_n+ \int_{\{\psi\geq 0\}}^{\partial} (\psi_n d''\alpha - \alpha d''\psi_n)\right)\\
   &= \lim_{n\to \infty} \left(\int_{\{\psi\geq 0\}} \alpha d'd''\psi_n+ \int_{\{\psi\geq 0\}}^{\partial}  - \alpha d''\psi_n\right)
\end{align*}
By \cite[Proposition 4.12]{lagerberg}, the left hand side 
\[\la\sum_{i=1}^l [V_i]\wedge d''(-v_i\cdot x), \alpha\ra = \la d'd''\max{\{0,\psi\}}, \alpha\ra - \lim_{n\to \infty} \int_{\{\psi\geq 0\}} \alpha d'd''\psi_n.\]
This proves the lemma.
\end{proof}

\begin{proof}[Proof of Corollary \ref{measureSupp}]
    We again first deal with the case when $\varphi = \varphi_1 = \cdots  = \varphi_{n-1}.$
	Note that $\varphi_\xi^+$ is psh-approachable, and so $(d'd''\varphi)^{n-1}\wedge d'd''\varphi_\xi^+$ indeed defines a measure on $X^{\an}$ by \cite[Corollary 5.6.6]{CLD12}. By the previous proposition, we have $(d'd''\varphi)^{n-1}\wedge d'd''\varphi_\xi = 0$. Thus $(d'd''\varphi)^{n-1}\wedge d'd''\varphi_\xi^+$ is zero on $\{\varphi_\xi\neq 0\}$.
	Let $x\in \{\varphi_\xi = 0\}$ and let $U$ be an open neighborhood of $x$ in $X^{\an}$ on which $\varphi$ and $\varphi_\xi$ are uniformly approximated by smooth $\xi$-equivariant psh functions $\varphi_\varepsilon$ and $\varphi_{\xi, \varepsilon}$. Let $V\subset U$ be a compact analytic domain containing $x$ such that $f_{\trop}(V)$ is a PL space of dimension $n$. Let $u$ be a smooth function compactly supported on $V$.
	Then
	\begin{align*}
	    \la (d'd''\varphi_\varepsilon)^{n-1}\wedge d'd''\varphi^+_{\xi} , u\ra 
		&=\la d'd''\varphi^+_{\xi}, u(d'd''\varphi_\varepsilon)^{n-1}\ra 
		= \int_U \varphi^+_{\xi} d'd''(u(d'd''\varphi_\varepsilon)^{n-1})\\
		&= \int_{U\cap \{\varphi_{\xi} \ge 0\}} \varphi_{\xi} d'd''(u(d'd''\varphi_\varepsilon)^{n-1})\\
	\end{align*}
	By Green's theorem %and lemme 7.7.3,
	\begin{align*}
		&\int_{U\cap \{\varphi_{\xi} \ge 0\}} \varphi_{\xi, \varepsilon'} d'd''(u(d'd''\varphi_\varepsilon)^{n-1})
		= \int_{U\cap \{\varphi_{\xi} \ge 0\}} u(d'd''\varphi_\varepsilon)^{n-1}\wedge d'd''\varphi_{\xi, \varepsilon'}\\
		&+ \int_{U\cap \{\varphi_{\xi} \ge 0\}}^{\partial} \left( \varphi_{\xi, \varepsilon'} d''(u(d'd''\varphi_\varepsilon)^{n-1}) - d''\varphi_{\xi,\varepsilon'
		}\wedge (u(d'd''\varphi_\varepsilon)^{n-1} \right)\\
		%&= \int_{U\cap \{\varphi_{\xi, \varepsilon} \ge 0\}}^{\partial} \left(\varphi_{\xi, \varepsilon} d''(u(d'd''\varphi_\varepsilon)^{n-1}) - d''\varphi_{\xi, \varepsilon}\wedge (u(d'd''\varphi_\varepsilon)^{n-1})\right)
	\end{align*}
	where the last equality follows from Green's theorem. Further, note that the term 
	$$\int_{U\cap \{\varphi_{\xi} \ge 0\}} u(d'd''\varphi_\varepsilon)^{n-1}\wedge d'd''\varphi_{\xi, \varepsilon'}\to 0$$  and 
	\[\int_{U\cap \{\varphi_{\xi} \ge 0\}}^{\partial}  \varphi_{\xi, \varepsilon'} d''(u(d'd''\varphi_\varepsilon)^{n-1})\to 0\]
	as $\varepsilon', \varepsilon\to 0$
	
	As in the proof of the previous proposition, we may assume that $x\in \Sigma_f^{(n)}$, and fix a convenient cell decomposition $\cC$ for $\Sigma_f$. Then
	\begin{align*}
		&\int_{U\cap \{\varphi_{\xi} \ge 0\}}^{\partial}  - d''\varphi_{\xi, \varepsilon'}\wedge (u(d'd''\varphi_\varepsilon)^{n-1})\\
		&=\sum_{\substack{ C\in\cC \\  \dim(C)=n}} \sum_{\substack{x\in F\subset C \\ \dim(F) = n-1 \\ F\in \cC}} \int_{F\cap U\cap \{\varphi_{\xi}\ge 0\}} - d''\varphi_{\xi, \varepsilon'}\wedge (u(d'd''\varphi_\varepsilon)^{n-1})
	\end{align*}
	Fix a cell $x\in F\subset \partial C\in \cC$. Let $D = f_{\trop}(C)$ and $G = f_{\trop}(F)\subset \partial D$. Now on $F$, let $\psi_\varepsilon, \psi_{\xi, \varepsilon'}, v$ be functions on $D$ which pull back to functions $\varphi_\varepsilon, \varphi_{\xi,\varepsilon'}, u$ respectively.
	
	\begin{align*}
		&\int_{F\cap U\cap \{\varphi_{\xi}\ge 0\}}  -d''\varphi_{\xi, \varepsilon'}\wedge (u(d'd''\varphi_\varepsilon)^{n-1}) 
		= \int_G  \la - d''\psi_{\xi, \varepsilon'}\wedge (v(d'd''\psi_\varepsilon)^{n-1}), \mu_D\ra|_{(G, D)}\\
		%&= \int_G v \la (d'd''\psi_{ \varepsilon})^{n-1}\wedge d''\psi_{\xi, \varepsilon}, \mu_D\ra|_{(G, D)}
	\end{align*}
	
	We may assume that the cell decomposition is fine enough so that $G\subset \{x_j=0\}$ for some $j$ and that by the previous lemma
    \begin{align*}
	\int_{G\subseteq \partial D}  \la - d''\psi_{\xi, \varepsilon'}\wedge (v(d'd''\psi_\varepsilon)^{n-1}), \mu_D\ra|_{(G, D)}
	&\to \int_{G\subseteq \partial D} v(d'd''\psi_\varepsilon)^{n-1}\wedge d''x_j\\
	&= \int_G v (d'd''(\psi_\varepsilon|_{\{x_j=0\}})^{n-1}
	\end{align*}
	as $\varepsilon'\to 0$. 
	Let $\varepsilon \to 0$, in view of \cite[Lemme 5.7.4]{CLD12}, the last integral converges to 
	$$\sum_{y\in S_x}\lambda_y u(y)$$
	where $S_x$ is a finite set of points in $G$, and $\lambda_y$ depend only on the tropical cone at $x$. This shows that the measure has discrete support.
	
	In the general case, we proceed with the same argument, and after taking $\varepsilon'\to 0$, locally we get an integral of the form 
	\[\int_{G\subseteq \partial D} v d'd''\psi_{1,\varepsilon}\wedge \cdots \wedge d'd''\psi_{n-1,\varepsilon} \wedge d''x_j.\]
	Now for each $(t_1, \cdots, t_{n-1})\in \RR^{n-1}_{\geq 0}$, we have that 
	$$ (d'd''(t_1\psi_{1,\varepsilon}+\cdots +t_{n-1}\psi_{n-1,\varepsilon}))^{n-1}\wedge d''x_j$$
	converges to a positive measure on $G$ with finite support $S_{x, t_1, \cdots, t_{n-1}}$. This shows each term in the expansion converges to a positive measure on $G$ with finite support. 
	In particular, 
	$$ d'd''\psi_{1,\varepsilon}\wedge \cdots \wedge d'd''\psi_{n-1,\varepsilon} \wedge d''x_j$$
	converges to a positive measure with finite support on $G.$
	
	Note further that one can embed $X$ into $\AA^N$ using a finite number of function $(f_\alpha)$ in $R$. For such a fixed embedding, \[\{\varphi_{\xi}\le 0\}\subset \bigcap_\alpha \{\log|f_\alpha|\leq 0\},\]
	and $\bigcap_\alpha \{\log|f_\alpha|\leq 0\}$ defines a compact disk in $\AA^{N, \an}$. Thus $\{\varphi_\xi = 0\}$ is a compact subset of $X^{\an}$, and the support has to be finite.
	
	Finally, since $\TT(\KK)$ is divisible, and each point $v$ in the support of the measure has finite $\TT$ orbit, we conclude that $v$ has to be $\TT(\KK)$-invariant.
\end{proof}

\begin{defn}
	Given a Fubini-Study function $\varphi$ on $X^{\an}$, we define the Monge-Amp\`ere energy of $\varphi$ to be
	\[E^{\na}(\varphi)  := E^{\na}(\varphi, \varphi_\xi):=\frac {1}{n\vol(\xi)} \sum_{j=0}^{n-1}\int_{X^{\an}\setminus\{0\}} (\varphi-\varphi_\xi) (d'd''\varphi)^j\wedge (d'd''\varphi_\xi)^{n-j-1}\wedge d'd''\varphi_\xi^+.\]
\end{defn}
\begin{remark}\label{FormulaEqual}
We note that this formula is identical to its counterpart in complex geometry.
Indeed let $r$ be a reference metric on $X$ viewed as a complex manifold compatible with $\xi$, i.e., $\xi = J(r\frac{\partial}{\partial r})$ where $J$ is the complex structure on $X$.  Let $\phi$ be a transverse psh potential on $X$, meaning $\phi$ is a function such that  the Lie derivatives $L_\xi \phi = L_{J\xi}\phi = 0$ and that $re^\phi$ is a psh function on $X$. Further denote the link $ \{r=1\}\cap X$ by $Y$.  Then $E(\phi)$ is given by (see \cite{CS19, LWX})
\begin{align*}
    E(\phi) &= \frac{1}{\vol(\xi)}\sum_{j=0}^{n-1}\int_{Y} \phi (dd^c\log r+dd^c\phi)^j\wedge (dd^c\log r)^{n-j-1}\wedge d^c\log r \\ 
    &= \frac{1}{\vol(\xi)}\sum_{j=0}^{n-1}\int_{X} \phi (dd^c(\log r+\phi))^j\wedge (dd^c\log r)^{n-j-1}\wedge dd^c (\max\{\log r, 0\})
\end{align*}
Here $\log r$ plays the role of $\varphi_\xi$ and $\phi$ corresponds to $\varphi-\varphi_\xi$.

\end{remark}

\subsection{Monge-Amp\`ere energy in the quasi-regular case}
We now consider the case of $X$ coming from a polarized pair $(V, L)$. This is when $\xi$ is rational. As explained in Section \ref{quasiregular}, assume $\xi \in \frac 1l \NN$, and write 
\[R = \bigoplus_{k}\left(\bigoplus_{\la \alpha, \xi \ra = \frac kl} R_\alpha\right) = \bigoplus_k H^0(V, kL).\]

A semivaluation $v$ on $V$ can also be evaluated on sections of $L$. This is done by trivializing $L$ at the center of $v$, and evaluating $v$ at the local function corresponding to the section under the trivialization.

\begin{lem}
	The map $i: V^{\an} \hookrightarrow X_\xi \subset  X^{\an}$,where $X_\xi$ is the set of $\la \xi\ra$-invariant semivaluations in $X^{\an}$, defined by sending a seimvaluation $v$ to the unique $\la\xi\ra$-invariant semivaluation such that $w(f_k) = v(f_k), \forall f_k \in H^0(V, kL)$, is an embedding of $V^{\an}$ into $X^{\an}$.
\end{lem}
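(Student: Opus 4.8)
The plan is to show that the map $i\colon V^{\an}\to X^{\an}$ is well-defined, continuous, injective, and a homeomorphism onto its image $X_\xi$, by exhibiting an explicit inverse. First I would verify that $i$ is well-defined: given a semivaluation $v$ on $V$, one needs a $\langle\xi\rangle$-invariant semivaluation $w$ on $R$ with $w(f_k)=v(f_k)$ for all $f_k\in H^0(V,kL)$. Since $\langle\xi\rangle$-invariance forces $w(\sum_k f_k)=\min_k w(f_k)$ (by the earlier lemma characterizing $\TT$-invariant semivaluations, applied to the one-parameter subgroup generated by $\xi$), the value $w$ on an arbitrary element of $R=\bigoplus_k H^0(V,kL)$ is determined by its values on homogeneous pieces; so one defines $w(\sum_k f_k):=\min_k v(f_k)$ and checks this is a semivaluation. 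The additivity axiom $w(fg)=w(f)+w(g)$ requires care: writing $f=\sum f_j$, $g=\sum g_k$, one has $fg=\sum_m(\sum_{j+k=m}f_jg_k)$, and one must check $\min_m v(\sum_{j+k=m}f_jg_k)=\min_j v(f_j)+\min_k v(g_k)$. The inequality $\ge$ is immediate from the ultrametric and additivity of $v$ on products of sections (using that $v$ evaluated on sections of $L^{\otimes k}$ via trivialization at the center is multiplicative); the reverse uses that the minimizing terms $f_{j_0}$, $g_{k_0}$ produce a term $f_{j_0}g_{k_0}$ whose $v$-value is $\le$ the value of any partial sum in degree $j_0+k_0$ that contains it — here one must check there is no cancellation in the leading term, i.e. $f_{j_0}g_{k_0}\ne 0$ and $v(f_{j_0}g_{k_0})$ is not destroyed, which follows because $v$ is a genuine semivaluation on the integral domain $\bigoplus H^0(V,kL)$ (passing to $\ker v$ and working in the residue field).

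Next I would check that the center/kernel behaves correctly so that the "trivialize $L$ at the center of $v$" recipe is consistent — i.e. that evaluating $v$ on a section $f_k$ is independent of the chosen local trivialization up to the ambiguity that cancels in ratios, and agrees with the usual notion; this is standard but should be stated. Then continuity of $i$: the topology on $X^{\an}$ is the coarsest making $w\mapsto w(f)$ continuous for $f\in R$, and $w=i(v)$ satisfies $i(v)(f)=\min_k v(f_k)$, each $v\mapsto v(f_k)$ being continuous on $V^{\an}$ by definition of its topology, and a finite min of continuous functions is continuous; so $i$ is continuous. Injectivity is clear since $v$ can be recovered from $i(v)$ by restricting to sections.

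For the statement that $i$ is an embedding (homeomorphism onto $X_\xi$ with the subspace topology), I would construct the inverse $r\colon X_\xi\to V^{\an}$ explicitly: given a $\langle\xi\rangle$-invariant semivaluation $w$ on $R$, define $r(w)$ on $V$ by declaring, for a ratio $f_k/g_k$ of sections of the same $L^{\otimes k}$ (local functions on $V$), $r(w)(f_k/g_k):=w(f_k)-w(g_k)$, and more generally on the homogeneous function field; one checks this is well-defined (ratios of sections generate the relevant function rings, $\langle\xi\rangle$-invariance guarantees consistency across degrees since $w(f_k^{\,l})=l\,w(f_k)$ etc.), and that $r\circ i=\mathrm{id}$, $i\circ r=\mathrm{id}_{X_\xi}$. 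Continuity of $r$ follows because $w\mapsto w(f_k)-w(g_k)$ is continuous on $X_\xi$. Thus $i$ is a homeomorphism onto $X_\xi$.

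The main obstacle I anticipate is the careful verification that the formula $w(\sum_k f_k)=\min_k v(f_k)$ genuinely defines a multiplicative semivaluation — specifically checking $w(fg)=w(f)+w(g)$ with no loss in the leading graded piece, and handling the case where $X$ (equivalently $V$, or $R$) is reducible or where $v$ is only a semivaluation (so $\ker v\ne 0$); one must pass to the reduced/irreducible component carrying the center of $v$ and argue there. A secondary subtlety is making the trivialization-of-$L$-at-the-center construction rigorous when the center of $v$ is not a closed point and when $L$ is only a $\QQ$-line bundle (work with a fixed power $L^{\otimes l}$ that is an honest line bundle, which is harmless since $\xi\in\frac1l\NN$). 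Everything else — continuity, injectivity, constructing the inverse — is formal once the semivaluation axioms are established.
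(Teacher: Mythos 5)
Your overall strategy is the same as the paper's: the paper also produces the inverse as the analytification of the projection $p\colon X\setminus\{0\}\to V$, which on points is exactly your recipe $r(w)(f_k/g_k)=w(f_k)-w(g_k)$, and then compares topologies. However, one of your assertions is wrong: $i$ is \emph{not} surjective onto $X_\xi$, so $i\circ r\neq \mathrm{id}_{X_\xi}$. For any $v\in V^{\an}$ and $k$ with $kL$ globally generated, some section $s\in H^0(V,kL)$ is nonvanishing at the center of $v$, so $i(v)(s)=0$; hence the image of $i$ is contained in $\{\varphi_\xi=0\}$. But $X_\xi$ contains $\langle\xi\rangle$-invariant points outside this set: for instance the valuation $w$ with $w(f_k)=k$ for all nonzero $f_k\in H^0(V,kL)$ (i.e.\ $v_\xi$), for which $\varphi_\xi(w)=-1$, and also the semivaluation supported at the vertex $0$, on which your $r$ is not even defined (there $w(g_k)=\infty$ for every $g_k$ with $k>0$). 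This is why the paper only inverts $i$ on $\{\varphi_\xi=0\}^{\langle\xi\rangle}$. The error does not sink the lemma as stated --- an embedding only needs a continuous left inverse on the image, and $r\circ i=\mathrm{id}$ together with continuity of $r$ on $i(V^{\an})$ suffices --- but the claim $i\circ r=\mathrm{id}_{X_\xi}$ should be removed and the image identified as $\{\varphi_\xi=0\}\cap X_\xi$.

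A smaller point: your justification of the no-cancellation step in $w(fg)=w(f)+w(g)$ (``because $v$ is a genuine semivaluation on the integral domain $\bigoplus_k H^0(V,kL)$'') is circular, since the assertion that $f\mapsto\min_k v(f_k)$ is a semivaluation on that ring is precisely what is being proved. The standard fix is to let $j_0$ (resp.\ $k_0$) be the \emph{smallest} index achieving $\min_j v(f_j)$ (resp.\ $\min_k v(g_k)$); then every other product $f_jg_k$ with $j+k=j_0+k_0$ satisfies $v(f_jg_k)=v(f_j)+v(g_k)>v(f_{j_0})+v(g_{k_0})$, so the ultrametric inequality forces $v\bigl(\sum_{j+k=j_0+k_0}f_jg_k\bigr)=v(f_{j_0})+v(g_{k_0})$. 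With these two repairs your argument is a correct, and more detailed, version of the paper's proof.
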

\begin{proof}
	After passing to a multiple of $L$, assume $L$ is globally generated. Take any semivaluation $v\in V^{\an}$. Let $w = i(v)$ be the image. Then we have $w\in \{\varphi_\xi = 0\}$. The analytification of the map $p: X\setminus\{0\}\to V$ gives the map in the opposite direction when restricting to $\{\varphi_\xi =0\}^{\la \xi\ra}$. It's easy to check that the maps are inverses to each other. 
	
	The topology on $V^{\an}$ is the weakest one such that $v\mapsto v(f_k)$ is continuous for all $f_k \in H^0(V, kL)$ and all $k$. The topology on $\{\varphi_\xi = 0\}^{\la \xi\ra}$ induced by $p$ is the weakest such that $w\mapsto w(f_k)$ is continuous for all $f_k\in \bigoplus\limits_{\la \xi, \alpha\ra  = k/l}R_\alpha$. Since $w$ is $\la \xi\ra$-invariant, this coincides with the subspace topology inherited from $X^{\an}$. Thus $i$ gives an embedding.
\end{proof}

Let  $\varphi = \max\{\frac{\log|f_k|+\lambda_k}{\la\alpha_k, \xi\ra}: f_k\in R_\alpha\}$ be a Fubini-Study function with $\chi$ of finite type. We will also normalize $\xi$ so that it has integral weights from now on.
\begin{lem}
	Under the assumption above, $\varphi = \frac 1m \max\limits_{0\le j\le \ell} \{\log|f_j|+\lambda_j\}$ for $f_j\in H^0(V, mL)$ and some $m$ sufficiently divisible.
\end{lem}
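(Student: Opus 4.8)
This is a normalization step: the integer denominators $\langle\alpha_j,\xi\rangle$ appearing in the defining maximum can be cleared by replacing each section with a suitable power. The plan is as follows. First I would write $\varphi=\max_{1\le j\le N}\{(\log|f_j|+\lambda_j)/\langle\alpha_j,\xi\rangle\}$ with $f_j\in R_{\alpha_j}$, as in the definition of a Fubini--Study function, so that the maximum is already over a finite index set (the finite-type hypothesis on $\chi$ plays no further role in this lemma, having already been used to produce the finite max). Since $\xi$ has been normalized to have integral weights, each $d_j:=\langle\alpha_j,\xi\rangle$ is a positive integer, and under the regrading $R=\bigoplus_k H^0(V,kL)$ recalled in Section~\ref{quasiregular}, the weight space $R_{\alpha_j}$ lies in the degree-$d_j$ graded piece; in particular $f_j\in H^0(V,d_jL)$.

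Next, I would pick $m$ divisible by every $d_j$ and large enough that $mL$ is an honest line bundle (recall that $L$ is only a $\QQ$-line bundle), and set $g_j:=f_j^{\,m/d_j}$. Then $g_j$ is $\TT$-homogeneous of weight $(m/d_j)\alpha_j$, and $\langle(m/d_j)\alpha_j,\xi\rangle=m$, so $g_j$ lies in the degree-$m$ piece $H^0(V,mL)$. Because every point of $X^{\an}$ is a multiplicative semivaluation, one has $\log|g_j|=(m/d_j)\log|f_j|$ pointwise on $X^{\an}$, whence $(\log|f_j|+\lambda_j)/d_j=\frac{1}{m}(\log|g_j|+\mu_j)$ with $\mu_j:=(m/d_j)\lambda_j$. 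Taking the maximum over $j$ and pulling the positive factor $\frac{1}{m}$ out of the max gives $\varphi=\frac{1}{m}\max_{1\le j\le N}\{\log|g_j|+\mu_j\}$, which is the asserted form after relabeling the index set as $0,\dots,\ell$. One also records that $\bigcap_j\{g_j=0\}=\bigcap_j\{f_j=0\}=\{0\}$, since raising to a positive power leaves the zero locus unchanged, so the $g_j$ retain the base-point-freeness required of a Fubini--Study datum on $V=\Proj R$.

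I do not expect a genuine obstacle here: the only points needing any care are the identification $f_j^{\,m/d_j}\in H^0(V,mL)$, which is immediate from the $\xi$-regrading, and the identity $\log|g_j|=(m/d_j)\log|f_j|$, which holds because points of the Berkovich analytification are multiplicative semivaluations. The lemma is purely a preparatory normalization that puts $\varphi$ into the standard shape of a Fubini--Study weight associated to a line bundle on $V$, which is what is needed for the comparison $E^{\na}(\varphi)=E^{\na}(\phi)$ with a Fubini--Study metric $\phi$ on $L^{\an}$ announced in the introduction.
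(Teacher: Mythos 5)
Your proposal is correct and follows essentially the same route as the paper: since the finite max defining $\varphi$ involves only finitely many weights $\alpha_j$ with $d_j=\langle\alpha_j,\xi\rangle\le M$, one takes $m$ divisible by all the $d_j$ (the paper takes $m=M!$) and replaces each $f_j$ by $f_j^{m/d_j}\in H^0(V,mL)$, using multiplicativity of the points of $X^{\an}$ to pull the exponent out of $\log|\cdot|$. The paper's proof is just a terser version of the same normalization; your added checks (zero locus unchanged, $mL$ an honest line bundle) are harmless elaborations.
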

\begin{proof}
	It suffices to show that $\varphi = \frac 1m \max\{\log|f_j|+\lambda_j: f\in R_\alpha, \la \alpha, \xi\ra = m\}$ for $m$ sufficiently divisible. Indeed, since the norm $\chi$ is of finite type, 
	$\varphi = \max\{\frac{\log|f|+\chi(f)}{\la\alpha, \xi\ra}: f\in R_\alpha, \la \alpha,\xi\ra \le M\}$. Take $m = M!$, and we are done.
\end{proof}

Thus, in view of the lemma, $\varphi$ corresponds to a $\TT$-invariant Fubini-Study metric of $L^{\an}$(see e.g. \cite{BJ18} for Fubini-Study metrics), i.e. a metric $\phi$ of the form 
$\phi = \frac 1m \max\limits_{0\le j\le \ell} \{\log|f_j|+\lambda_j\}$ where $f_j\in R_{\alpha_j}$ with $\la \alpha_j, \xi\ra = m$ are global sections of $mL$ without common zero. By introducing the trivial metric $\phi_{\mathrm{triv}}$, the metric $\phi$ can also be viewed as a function on $V^{\an}$ via $\phi - \phi_{\mathrm{triv}}$. In particular, $\varphi_\xi$ corresponds to $\phi_{\mathrm{triv}}$.

\begin{prop}
	Let $\varphi_1, \cdots, \varphi_{n-1}$ be Fubini-Study functions on $X^{\an}$ as described above, and $\phi_1, \cdots, \phi_{n-1}$ the corresponding Fubini-Study metrics on $L^{\an}$. 
	Then, under the embedding $i$ in the previous lemma, 
	\[i_*(d'd''\phi_1\wedge \cdots \wedge d'd''\phi_{n-1}) = d'd''\varphi_1\wedge\cdots \wedge d'd''\varphi_{n-1}\wedge d'd''\varphi_\xi^+,\]
where $d'd''\phi_1\wedge \cdots \wedge d'd''\phi_{n-1}$ is defined as in \cite[Sections 6.2-6.4]{CLD12}.
\end{prop}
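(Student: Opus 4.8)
The plan is to verify the identity locally on $X^{\an}$ near each point of $\{\varphi_\xi=0\}$, to reduce by multilinearity to the diagonal case, and then to carry out the comparison in Lagerberg's calculus after tropicalizing. By Proposition~\ref{SuppMA} and Corollary~\ref{measureSupp} the right-hand side is a finite atomic measure carried by $\TT$-invariant points of $\{\varphi_\xi=0\}=i(V^{\an})$, while by \cite[Sections 6.2-6.4]{CLD12} the left-hand side is the pushforward of a positive measure on $V^{\an}$; so it is enough to prove equality in a neighbourhood of a fixed $\TT$-invariant point $x\in\{\varphi_\xi=0\}$, which, exactly as in the proof of Corollary~\ref{measureSupp}, we may take to lie in $\Sigma_f^{(n)}$. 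After regularizing the maxima by the functions $M_\varepsilon$ of Lemma~\ref{SmApprox}, both sides become symmetric and multilinear in the $\varphi_i$ (resp.\ the $\phi_i$), so, as in the last paragraph of the proof of Corollary~\ref{measureSupp}, it suffices to treat the case $\varphi:=\varphi_1=\cdots=\varphi_{n-1}$ and $\phi:=\phi_1=\cdots=\phi_{n-1}$.

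The second step is to set up two compatible moment maps. By the lemmas above we may write $\varphi=\frac1m\max_{0\le j\le\ell}\{\log|f_j|+\lambda_j\}$ with $f_j\in R_{\alpha_j}$, $\la\alpha_j,\xi\ra=m$, and after enlarging the family we may assume $\bigcap_j\{f_j=0\}=\{0\}$ and $\varphi_\xi=\frac1m\max_j\{\log|f_j|\}$ near $x$. Since the $f_j$ are elements of $R$, they give a genuine moment map $f=(f_1,\dots,f_\ell)\colon U\to\GG_m^{\ell,\an}$ on a neighbourhood $U$ of $x$, with $\varphi=f_{\trop}^*\psi$ and $\varphi_\xi=f_{\trop}^*\psi_\xi$, where $\psi(u)=\frac1m\max_j\{-u_j+\lambda_j\}$ and $\psi_\xi(u)=\frac1m\max_j\{-u_j\}$. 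On the other side, trivializing $mL$ near the center of the corresponding point of $V^{\an}$ by one of the $f_j$, say $f_\ell$, the ratios $f_j/f_\ell$ give a moment map $g$ on a neighbourhood $W$ of that point, with $\phi-\phi_{\mathrm{triv}}=g_{\trop}^*\theta$, $\theta(y)=\frac1m\max_j\{-y_j+\lambda_j\}$. The key geometric input, which is where the cone hypothesis enters, is that $R=\bigoplus_k H^0(V,kL)$ with $\xi$ acting on $H^0(V,kL)$ with weight $k$, together with the description of the embedding $i$, shows that $f_{\trop}$ and $g_{\trop}$ are intertwined by the linear projection $\RR^\ell\to\RR^\ell/\RR\cdot\mathbf{1}$ (modulo the additive constant from the trivialization): concretely, $\Pi_f$ near $x$ is an $\RR$-bundle over $g_{\trop}(W)$ with fiber the $\mathbf{1}$-direction of the $\xi$-action, $\psi_\xi$ is the fiber coordinate, and $i$ realizes $V^{\an}$ as the zero slice $\{\psi_\xi=0\}$.

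The third step is the computation in Lagerberg's calculus, comparing $(d'd''\psi)^{n-1}\wedge d'd''\psi_\xi^+$ with $(d'd''\theta)^{n-1}$. Both $\psi$ and $\psi_\xi$ are affine along $\mathbf{1}$ (indeed $\psi(u+t\mathbf{1})=\psi(u)-t/m$, and likewise for $\psi_\xi$), and by property (4) of Lemma~\ref{SmApprox} the regularizations $\psi_\varepsilon$ keep this property; hence $d'd''\psi_\varepsilon$ and all its powers descend to $\RR^\ell/\RR\cdot\mathbf{1}$, so by \cite[Corollary 5.6.5]{CLD12} the current $(d'd''\psi_\varepsilon)^{n-1}$ is the pullback of the $\mathbf{1}$-invariant form $(d'd''\theta_\varepsilon)^{n-1}$. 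On the other hand $\psi_\xi^+=\max\{0,\psi_\xi\}$ and, by the lemma preceding Corollary~\ref{measureSupp}, $d'd''\psi_{\xi,\varepsilon'}^+$ converges to $\sum_i[V_i]\wedge d''(-v_i\cdot u)$, the current of integration over the PL hypersurface $\{\psi_\xi=0\}$ contracted with the conormal directions; wedging this with the $\mathbf{1}$-invariant form $(d'd''\theta_\varepsilon)^{n-1}$ restricts the latter to $\{\psi_\xi=0\}$, which is exactly integration against the $V$-side Monge-Amp\`ere current transported through the slice. Letting $\varepsilon'\to0$ and then $\varepsilon\to0$, with the boundary terms produced by Green's theorem on $\{\psi_\xi\ge0\}$ vanishing in the limit exactly as in the proof of Corollary~\ref{measureSupp}, and transporting back through $f_{\trop}$ and $g_{\trop}$, gives the claimed equality near $x$; summing over the finitely many relevant points of $\{\varphi_\xi=0\}$ finishes the proof.

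The step I expect to be the main obstacle is the bookkeeping of the second and third steps together: choosing the regularizations $\psi_\varepsilon,\theta_\varepsilon$ so that they are intertwined by the projection $\RR^\ell\to\RR^\ell/\RR\cdot\mathbf{1}$, and controlling the Green's-theorem boundary integrals so that they disappear in the limit. This is essentially a rerun of the computation already performed in the proof of Corollary~\ref{measureSupp}; the genuinely new ingredient is only the identification, via the cone structure $R=\bigoplus_k H^0(V,kL)$ and the embedding $i$, of the $V^{\an}$ Monge-Amp\`ere picture with the zero slice $\{\psi_\xi=0\}$ of the $X^{\an}$ picture.
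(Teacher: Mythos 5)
Your proposal is correct and follows essentially the same route as the paper: reduce to the diagonal case, set up the two moment maps on $p^{-1}(U)\subset X^{\an}$ and $U\subset V^{\an}$ intertwined by the linear projection killing the $\xi$-direction (the paper's $\pi\colon\RR^{\ell+1}\to\RR^{\ell}$, $(z_0,\dots,z_\ell)\mapsto(z_j-z_0)_j$, is exactly your quotient by $\RR\cdot\mathbf{1}$), and then use the Green's-theorem/boundary-current computation from Corollary \ref{measureSupp} to see that wedging with $d'd''\varphi_\xi^+$ restricts the descended form to the slice $\{\varphi_\xi=0\}\cong V^{\an}$. The only cosmetic difference is that you phrase the restriction step via the current of integration $\sum_i[V_i]\wedge d''(-v_i\cdot u)$ while the paper writes it as the boundary integral over facets of $\{\varphi_\xi\ge 0\}$; these are the same computation.
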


\begin{proof}
	We again do the case when $\phi_i=\phi = \frac 1m \max\limits_{0\le j\le \ell} \{\log|s_j|+\lambda_j\}, \forall i$, where $s_j\in H^0(V, mL)$, and hence $\varphi_i = \varphi = \frac 1m \max\limits_{0\le j\le \ell} \{\log|s_j|+\lambda_j\}, \forall i$. The general case follows from the same argument. Let $x\in X_\xi$. Let $U$ be an open neighborhood of $x$ contained in $X_\xi$. By possibly shrinking $U$, we can also assume that $U\subset \{\varphi_\xi = \log|s_0|\}\subset \{\varphi_\xi = 0\}$, that $(U, s_0)$ trivializes the line bundle on $V$, and that $s_j, 0\le j\le \ell$ are nonvanishing sections on $U$. Now $\frac{s_j}{s_0}, 1\le j\le \ell$ are nonvanishing functions on $U\subset V^{\an}$ and thus give rise to a moment map $\nu=(\frac{s_1}{s_0}, \frac{s_2}{s_0}, \cdots, \frac{s_\ell}{s_0}): U\to \GG_m^\ell$. Now we can also identify $p^{-1}(U)$ with $U\times \GG_m$ via $s_0$, and thus gives a moment map $\mu=(s_0, s_1, \cdots, s_\ell): p^{-1}(U)\to \GG_m^{\ell+1}$. Thus we have the following commutative diagram of tropicalizations of moment maps:
	\[\begin{tikzcd}		p^{-1}(U) \arrow[d] \arrow[r] \arrow[r, "\mu_{\textrm{trop}}"] & \mathbb{R}^{\ell+1} \arrow[d, "\pi"] \\
		U \arrow[r, "\nu_{\textrm{trop}}"]             & \mathbb{R}^{\ell}           
	\end{tikzcd},\]
	where $\pi: \RR^{\ell+1}\to \RR^{\ell}, (z_0, \cdots, z_\ell)\mapsto (z_1-z_0, z_2-z_0, \cdots, z_\ell-z_0)$. Under these moment maps, 
	$\phi = \nu_{\trop}^*(\tilde{\phi}(y)$ where $\tilde{\phi}(y) = \frac 1m\max\{\lambda_0, -y_1+\lambda_1, \cdots, -y_\ell+\lambda_\ell\}, $ and
	$\varphi = \mu_{\trop}^*(\frac 1m\max\{-z_0+\lambda_0, \cdots, -z_\ell+\lambda_\ell\}) = \mu_{\trop}^*(\pi^*\tilde{\phi}(z_0, \cdots, z_\ell)-z_0)$. 
	
	Fix a convenient cell decomposition on $p^{-1}(U)\cap \{\varphi_\xi = 0\}$, and a cell $C$ containing $x$. On $p^{-1}(U)$, let $u$ be a compactly supported tropically smooth function with support contained in the union of $n$-cells containing $x$. After possibly replacing $\varphi, \varphi_\xi$ by smooth approximations and taking limits and applying Green's theorem, as in the proof of Corollary \ref{measureSupp}, we have 
	\begin{align*}
		\int_{p^{-1}(U)} u(d'd''\varphi)^{n-1}\wedge d'd''\varphi_\xi^+ &= \la d'd''\varphi_\xi^+, u(d'd''\varphi)^{n-1}\ra \\
		&= \int_{\{\varphi_\xi\ge 0\}\cap p^{-1}(U)}^{\partial} u (d'd''\varphi)^{n-1}\wedge d''\varphi_\xi.
	\end{align*}
	
	 Let $D$ be a facet of $C$ containing $x$. Then $\pi$ restricts to a homeomorphism (with the same calibration) of cells  $\mu_{\trop}(D)\to\nu_{\trop}(D), (0, z_1, \cdots, z_\ell)\mapsto (z_1, \cdots, z_\ell)$, and locally the last integral is given by 
	\begin{align*}
		\int_{D} u (d'd''\varphi)^{n-1}\wedge d''\varphi_\xi
	\end{align*}
	where $D$ is equipped with the induced calibration on $C$. Since $\mu_{\trop}$ and $\nu_{\trop}$ are homeomorphsims on $C$ and $D$, one can find $\tilde{u}$ and $\tilde{u}'$ such that $u|_D=\mu_{\trop}^* \tilde{u} = p^*\nu_{\trop}^*\tilde{u}'$, where $\tilde{u}' = (\pi^{-1})^*(\tilde{u})$. Then
	\begin{align*}
		\int_{D} u (d'd''\varphi)^{n-1}\wedge d''\varphi_\xi 
		&= \int_{\mu_{\trop}(D)} \tilde{u} (d'd''(\pi^*\tilde{\phi}))^{n-1}\wedge d''(-z_0)\\
		&= \int_{\nu_{\trop}(D)} \tilde{u}' (d'd''\tilde{\phi})^{n-1} %(pushforward by pi^{-1})
		= \int_D u (d'd''\phi)^{n-1}
	\end{align*}
\end{proof}

We can now conclude that $E^{\na}(\varphi)$ agrees with the Monge-Amp\`ere energy of $\phi$ considered in \cite{BHJ17}.
\begin{cor}\label{energyEqual}
	For each $\varphi$ on $X$ that corresponds to some $\TT$-invariant Fubini-Study metric $\phi$ on $L$ as above, we have
	\[E^{\na}(\varphi) = E^{\na}(\phi):=\frac {1}{n\vol(L)}\sum_{j=0}^{n-1} \int_{V^{\an}} (\phi - \phi_{\mathrm{triv}}) (d'd''\phi)^j\wedge (d'd''\phi_{\mathrm{triv}})^{n-1-j},\]
	where $V^{\an}$ is identified with the subset $X_\xi$ in $X^{\an, T}$.
\end{cor}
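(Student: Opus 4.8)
The plan is to unwind the definition of $E^{\na}(\varphi)$ and match it term-by-term with that of $E^{\na}(\phi)$, transporting everything along the embedding $i\colon V^{\an}\hookrightarrow X^{\an}$. Recall
\[
E^{\na}(\varphi)=\frac{1}{n\vol(\xi)}\sum_{j=0}^{n-1}\int_{X^{\an}\setminus\{0\}}(\varphi-\varphi_\xi)\,(d'd''\varphi)^j\wedge(d'd''\varphi_\xi)^{n-1-j}\wedge d'd''\varphi_\xi^+,
\]
and that $\vol(\xi)=\vol(L)$ by Lemma \ref{volEqual}(a); so it suffices to prove, for each $0\le j\le n-1$,
\[
\int_{X^{\an}\setminus\{0\}}(\varphi-\varphi_\xi)\,(d'd''\varphi)^j\wedge(d'd''\varphi_\xi)^{n-1-j}\wedge d'd''\varphi_\xi^+=\int_{V^{\an}}(\phi-\phi_{\mathrm{triv}})\,(d'd''\phi)^j\wedge(d'd''\phi_{\mathrm{triv}})^{n-1-j}.
\]

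First I would record that the measure $\nu_j:=(d'd''\varphi)^j\wedge(d'd''\varphi_\xi)^{n-1-j}\wedge d'd''\varphi_\xi^+$ has finite support consisting of $\TT$-invariant points of $\{\varphi_\xi=0\}$ by Corollary \ref{measureSupp} (taking $\varphi_1=\cdots=\varphi_j=\varphi$, $\varphi_{j+1}=\cdots=\varphi_{n-1}=\varphi_\xi$). Since $\xi$ generates a subtorus of $\TT$, such points are $\la\xi\ra$-invariant, hence the support lies in $\{\varphi_\xi=0\}\cap X_\xi=i(V^{\an})$, the last identity being the content of the Lemma preceding the previous Proposition. Then the previous Proposition, applied with the same choice of $\varphi_i$'s (whose corresponding Fubini-Study metrics on $L^{\an}$ are $\phi$ and $\phi_{\mathrm{triv}}$), gives the identity of measures
\[
\nu_j=i_*\big((d'd''\phi)^j\wedge(d'd''\phi_{\mathrm{triv}})^{n-1-j}\big).
\]
Finally, by construction of the correspondence $\varphi\leftrightarrow\phi$ — the normalization $\varphi=\tfrac1m\max_j\{\log|f_j|+\lambda_j\}$ with $f_j\in H^0(V,mL)$, together with the fact that $\varphi_\xi$ corresponds to $\phi_{\mathrm{triv}}$ — we have $i^*(\varphi-\varphi_\xi)=\phi-\phi_{\mathrm{triv}}$ as functions on $V^{\an}$. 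As $i$ is a topological embedding and $\nu_j$ is atomic with support in $i(V^{\an})$, the projection formula yields the desired term-by-term equality; summing over $j$ and dividing by $n\vol(\xi)=n\vol(L)$ gives $E^{\na}(\varphi)=E^{\na}(\phi)$. The general case (distinct $\varphi_i$'s, distinct $\phi_i$'s) follows by the same argument with multilinear expansions, exactly as in the previous Proposition.

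The main obstacle is the bookkeeping behind $i^*(\varphi-\varphi_\xi)=\phi-\phi_{\mathrm{triv}}$ together with the compatibility of $i_*$ with the Bedford--Taylor / Chambert-Loir--Ducros products: one must verify that restricting the Fubini-Study function $\varphi$ to the fixed locus $X_\xi$, after the $s_0$-trivialization used in the previous Proposition, genuinely recovers $\phi$ viewed as a function on $V^{\an}$, and that the cell-by-cell computation identifying the two mixed Monge--Amp\`ere measures is consistent with the definition of $d'd''\phi_1\wedge\cdots\wedge d'd''\phi_{n-1}$ from \cite[Sections 6.2--6.4]{CLD12}. Once this is granted, the remaining point — that integrating a continuous function against a finite atomic measure commutes with $i_*$ — is immediate.
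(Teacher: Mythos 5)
Your proposal is correct and follows essentially the same route as the paper, which simply cites the pushforward identity $i_*(d'd''\phi_1\wedge\cdots\wedge d'd''\phi_{n-1})=d'd''\varphi_1\wedge\cdots\wedge d'd''\varphi_{n-1}\wedge d'd''\varphi_\xi^+$ together with $\vol(\xi)=\vol(L)$ from Lemma \ref{volEqual}; your term-by-term application with $j$ copies of $\varphi$ and $n-1-j$ copies of $\varphi_\xi$, plus the observation $i^*(\varphi-\varphi_\xi)=\phi-\phi_{\mathrm{triv}}$ and the projection formula for the atomic measures, is exactly the intended (and here spelled-out) argument.
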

\begin{proof}
	This follows from the previous proposition and Lemma \ref{volEqual}.
\end{proof}

\subsection{Monge-Amp\`ere energy on $X^{\an}$}
We are now ready to prove Theorem \ref{main}.
\begin{thm}
	For a Fubini-Study function $\varphi$ induced by a finitely generated filtration $\cF$, we have
	\[E^{\na}(\varphi) = S(\cF),\tag{1}\]
	and when the filtration comes from a test configuration $(\cX,\xi, \eta)$ with $\eta$ in the Reeb cone, 
	\[E^{\na}(\varphi) =\frac{1}{n} \frac{D_{-\eta} \vol_{\cX_0}(\xi)}{\vol(\xi)}\tag{2}.\]
\end{thm}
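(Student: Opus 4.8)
The plan is to establish $(1)$ first when $\xi$ is rational, then deduce the irrational case by approximation, and finally obtain $(2)$ by combining $(1)$ with Theorem~\ref{MAenergy}. For rational $\xi$, rescale it to be integral, so that $X = C(V,L)$ with $R = \bigoplus_{k\ge 0}H^0(V,kL)$. Being $\TT$-invariant, $\cF$ respects the coarser $\ZZ$-grading by $\langle\,\cdot\,,\xi\rangle$ and hence induces a finitely generated filtration on $\bigoplus_k H^0(V,kL)$; by the lemmas of Section~5.3 the Fubini-Study function $\varphi$ corresponds to a $\TT$-invariant Fubini-Study metric $\phi$ on $L^{\an}$ with $\varphi_\xi$ corresponding to $\phi_{\mathrm{triv}}$. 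Corollary~\ref{energyEqual} then gives $E^{\na}(\varphi) = E^{\na}(\phi)$, and by \cite{BHJ17} this non-Archimedean Monge-Amp\`ere energy equals the $S$-invariant of the induced filtration on the section ring of $(V,L)$; combined with Lemma~\ref{volEqual}, which relates that $S$-invariant to $S(\cF)$, this proves $(1)$ for rational $\xi$ (up to the dimensional normalization).

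For general $\xi$ in the Reeb cone, fix once and for all a finite set $A\subset\Lambda$ and sections $g_\beta\in R_\beta$ ($\beta\in A$) realizing $\chi$ as a norm of finite type, augmented by finitely many further sections so that the $g_\beta$ generate an $\fm$-primary ideal; these choices do not depend on $\xi$, and for every $\xi$ we may write $\varphi = \max_\beta\{(\log|g_\beta|+\chi(g_\beta))/\langle\beta,\xi\rangle\}$ and $\varphi_\xi = \max_\beta\{\log|g_\beta|/\langle\beta,\xi\rangle\}$. Note that $\{\varphi_\xi\le 0\} = \bigcap_\beta\{\log|g_\beta|\le 0\}=:K$ is a compact set independent of $\xi$, and that $\varphi$ and $\varphi_\xi$ depend continuously on $\xi$, uniformly on $X^{\an}\setminus\{0\}$. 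Since the moment map $g = (g_\beta)_{\beta\in A}$ does not involve $\xi$, its characteristic polyhedron and tropical image are fixed, and in these coordinates $\varphi$ and $\varphi_\xi$ are pullbacks of the piecewise-linear functions $x\mapsto\max_\beta\{(-x_\beta+\chi(g_\beta))/\langle\beta,\xi\rangle\}$ and $x\mapsto\max_\beta\{-x_\beta/\langle\beta,\xi\rangle\}$, whose slopes and values vary continuously with $\xi$. Using Corollary~\ref{measureSupp} to localize each integral in the definition of $E^{\na}(\varphi)$ to its finite atomic support in $K$, one concludes that $E^{\na}(\varphi)$ is continuous in $\xi$; meanwhile $S(\cF,\xi)$ is continuous in $\xi$ by Proposition~\ref{continuity}. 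Choosing rational $\xi'\to\xi$ and applying the rational case yields $(1)$ in general.

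Finally, when $\cF$ arises from a test configuration $(\cX,\xi,\eta)$ with $\eta$ in the Reeb cone of $\TT\times\GG_m$, Theorem~\ref{MAenergy} identifies $S(\cF)=E^{\mathrm{NA}}(\cX,\xi,\eta)$ with $\tfrac{1}{n+1}D_{-\eta}\vol_{\cX_0}(\xi)/\vol(\xi)$, and combining this with $(1)$ gives $(2)$ up to a dimensional constant. The delicate point throughout is the continuity of $E^{\na}(\varphi)$ in $\xi$ used in the approximation step: both the atoms of the Monge-Amp\`ere measures appearing in $E^{\na}$ and their masses are governed by the local polyhedral geometry at each support point, which itself moves with $\xi$; reducing to the fixed moment map $g$ and its fixed characteristic polyhedron turns this into a statement about mixed Monge-Amp\`ere numbers of piecewise-linear convex functions under continuous perturbation of slopes, which must be verified by a direct combinatorial computation.
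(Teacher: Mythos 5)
Your proposal follows essentially the same route as the paper: reduce to integral $\xi$ via homogeneity, identify $E^{\na}(\varphi)$ with $E^{\na}(\phi)$ on $(V,L)$ by Corollary \ref{energyEqual}, invoke the projective result of \cite{BHJ17} together with Lemma \ref{volEqual}, handle irrational $\xi$ by rational approximation, and deduce $(2)$ from Theorem \ref{MAenergy}. The only difference is that you spell out (and correctly flag as the delicate point) the continuity of $E^{\na}(\varphi,\varphi_\xi)$ in $\xi$ via a fixed moment map and its fixed characteristic polyhedron, a step the paper's proof merely asserts with ``The conclusion follows by noting $E^{\na}(\varphi_k,\varphi_{\xi_k})\to E^{\na}(\varphi,\varphi_\xi)$.''
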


\begin{proof} This is already shown in Corollary \ref{energyEqual} and Lemma \ref{volEqual} when $\xi$ is rational.  Indeed, we may assume $\xi$ is integral by homogeneity, that $\tilde{S}(\cF) = E^{\na}(\phi)$ was proved in \cite{BJNA} for a finitely generated filtration, and we have
\[E^{\na}(\varphi)  = E^{\na}(\phi) = \tilde{S}(\cF) = S(\cF).\]
When the filtration comes from a test configuration, Theorem \ref{MAenergy} implies (2).
When $\xi$ is irrational, we can approximate $\xi$ by rational $\xi_k$'s. The conclusion follows by noting $E^{\na}(\varphi_k, \varphi_{\xi_k}) \to E^{\na}(\varphi, \varphi_{\xi})$ %(lemme 8.3.1) 
and similarly for the right hand side.
\end{proof}

\section{The toric case}
In this section, we consider the case when $X$ is toric, and $\TT$ is the torus densely embedded in $X$. We will follow terminology and notations used in \cite{FultonToric}.
Let $X = \Spec R$ is a toric variety corresponding to a cone $\sigma$ in a lattice $N$. Denote by $M$ the dual lattice, and $\sigma^{\vee}$ the dual cone. Then the weight decomposition on $R$ induced by $\TT$ is
$$R=\bigoplus_{u\in \sigma^\vee \cap M}\CC\cdot \chi^u.$$
A Reeb field $\xi$ can be thought of as a vector in $\mathrm{int}( \sigma) $.

To see what Okounkov body looks like on $X$, we first choose a rank $n$ valuation 
$$\nu: R\to \ZZ^n_+, \sum_u a_u \chi^u\mapsto \min_u\{(\la u, e_1\ra, \cdots, \la u, e_n\ra)\},$$
where $e_1, \cdots, e_n\in \sigma$ form a basis of $N_\RR:= N\otimes \RR$ with $\det\{e_i\} = 1$. This also defines a linear map 
$$\iota: M_\RR\to \RR^n, u\mapsto \nu(u) = (\la u, e_1\ra, \cdots, \la u, e_n\ra).$$ 
Let 
$$Q_\xi = \{u\in \sigma^\vee : \la u, \xi\ra \leq 1\},$$ and 
$$P_\xi = \{u\in \sigma^\vee : \la u, \xi\ra = 1\}.$$
\begin{prop}
	The Okounkov body $\Delta$ for $X$ with respect to the valuation $\nu$ is $\iota(Q_\xi)$.
\end{prop}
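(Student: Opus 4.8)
The strategy is to compute both sides explicitly and match them. First I would set up the semigroup computation. By definition, the Okounkov body is $\Delta \times \{1\} = \Sigma(\Gamma) \cap (\RR^n \times \{1\})$, where $\Gamma = \{(x,m) : x \in \Gamma_m\}$ and $\Gamma_m = \nu(R_m)$ with $R_m = \bigoplus_{\la \alpha,\xi\ra \le m} R_\alpha$. In the toric case the weight spaces are one-dimensional, $R_\alpha = \CC \cdot \chi^\alpha$ for $\alpha = u \in \sigma^\vee \cap M$, and the valuation $\nu$ is monomial: $\nu(\chi^u) = \iota(u)$. Hence $\Gamma_m = \{\iota(u) : u \in \sigma^\vee \cap M,\ \la u,\xi\ra \le m\} = \iota\big((mQ_\xi) \cap M\big)$, since $\la u,\xi\ra \le m$ is precisely the condition $u \in mQ_\xi$.

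Next I would pass to the cone and take the limit. The semigroup $\Gamma$ is, after applying $\iota \times \mathrm{id}$, the set of lattice points $(u, m)$ with $u \in \sigma^\vee \cap M$ and $\la u, \xi\ra \le m$, i.e. the lattice points in the cone $\widetilde{C} = \{(u,m) \in M_\RR \times \RR_{\ge 0} : u \in \sigma^\vee,\ \la u,\xi\ra \le m\}$. The closed convex cone $\Sigma(\Gamma)$ generated by $\Gamma$ is then $(\iota \times \mathrm{id})(\widetilde{C})$: one inclusion is clear, and for the reverse one uses that $\widetilde{C}$ is a rational polyhedral cone, so its lattice points generate it as a cone (here I would invoke that $\iota$ is a linear isomorphism onto its image when $\{e_i\}$ is a basis, and that $\Gamma$ spans $\ZZ^{n+1}$ by Lemma~\ref{OkounkovBodyCondition}(3), which also holds here). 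Slicing at $m = 1$: $\Sigma(\Gamma) \cap (\RR^n \times \{1\}) = \iota\big(\{u \in \sigma^\vee : \la u,\xi\ra \le 1\}\big) \times \{1\} = \iota(Q_\xi) \times \{1\}$, giving $\Delta = \iota(Q_\xi)$.

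Alternatively, and perhaps more cleanly, I would argue at the level of measures via Theorem~\ref{VolumeConvergence}: the normalized counting measures $\rho_m = \frac{1}{m^n}\sum_{x \in \Gamma_m} \delta_{m^{-1}x}$ converge weakly to Lebesgue measure on $\Delta$, while $m^{-1}\Gamma_m = \iota\big(m^{-1}((mQ_\xi)\cap M)\big)$, whose normalized counting measure converges to the pushforward under $\iota$ of Lebesgue measure on $Q_\xi$; comparing supports identifies $\Delta = \iota(Q_\xi)$. The only real subtlety — and the step I expect to be the main obstacle — is justifying that $\Sigma(\Gamma)$ is exactly $(\iota\times\mathrm{id})(\widetilde C)$ rather than something smaller, i.e. that taking the \emph{closed} convex cone on the lattice points $\widetilde C \cap (M \times \ZZ)$ recovers all of $\widetilde C$; this is a standard fact for rational polyhedral cones but needs the observation that $\widetilde C$ is rational (its defining inequalities $u \in \sigma^\vee$ and $\la u,\xi\ra \le m$ have rational coefficients when $\xi$ is rational, and for irrational $\xi$ one takes a closure/continuity argument, noting $Q_\xi$ still has nonempty interior since $\xi \in \mathrm{int}(\sigma)$). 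Once this is in hand the identification is immediate.
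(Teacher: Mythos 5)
Your proposal is correct and follows essentially the same route as the paper: both start from the identification $\Gamma_m=\iota(mQ_\xi\cap M)$ and then conclude by the density of rational points in the convex body $Q_\xi$ (the paper phrases this at the level of the slices $\frac1m\mathrm{Conv}(\Gamma_m)$, you at the level of the cone $\Sigma(\Gamma)$, which is a cosmetic difference). Your flagged subtlety is handled correctly: for irrational $\xi$ the cone is not rational polyhedral, but since $\xi\in\mathrm{int}(\sigma)$ the body $Q_\xi$ is compact with nonempty interior, so its rational points are dense and the closed convex cone on the lattice points is all of $\widetilde C$.
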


\begin{proof}
By construction, $\Gamma_m = \nu(R_m) = \{\iota(u) : \la u, \xi\ra \leq m, u\in \sigma^\vee\cap M\} = \iota(mQ_\xi \cap M).$ Let $\mathrm{Conv}(\Gamma_m)$ be the convex hull of $\Gamma_m$. Then for each $m$, 
\[\frac 1m \mathrm{Conv}(\Gamma_m) \subseteq \iota(Q_\xi\cap \frac 1m M).\]
Hence
\[\Delta = \overline{\bigcup_m \frac 1m \mathrm{Conv}(\Gamma_m)} \subseteq \iota(Q_\xi).\]

Conversely, note that
$$\iota\left(\bigcup_m (Q_\xi\cap \frac 1m M)\right)\subseteq \Delta,$$
we have $\Delta = \iota(Q_\xi)$ since $\iota\left(\bigcup_m (Q_\xi\cap \frac 1m M)\right)$ is dense in $\iota(Q_\xi)$.

\end{proof}

A filtration $\cF$ on $R$ induces a function $\psi: \sigma^\vee \cap M \to \RR$, $\psi(u) = \sup\{\lambda: \cF^\lambda R_u \neq 0\}$. By the multiplicativity of the filtration, $\psi$ is superadditive, i.e. $\psi(u_1+u_2)\ge \psi(u_1)+\psi(u_2)$. Define $\tilde{\psi}(u) = \lim\limits_{m\to \infty}\frac 1m \psi(mu)$ to be its homogenization. Then $\tilde{\psi}$ extends to a sublinear function on $\sigma^\vee$, which we will still denote by $\tilde{\psi}$. This function is also concave on $\mathrm{int}(\sigma^\vee)$.

\begin{prop}
	The concave transform $G: \Delta \to \RR_+$ is given by $x=\iota(u)\mapsto \tilde{\psi}(u)$ under the identification in the previous proposition.
\end{prop}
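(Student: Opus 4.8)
The plan is to pin down the sets $\Gamma_m^t$ in the toric case and then read off $G$ from the Okounkov-body description already established. Since $\cF$ is $\TT(\KK)$-invariant and each weight space $R_u=\CC\cdot\chi^u$ is one-dimensional, $\cF^\lambda R_u$ equals $R_u$ when $\lambda\le\psi(u)$ and $0$ otherwise (left-continuity handles $\lambda=\psi(u)$); hence
\[
R_m^t=\cF^{mt}R_m=\bigoplus_{\substack{u\in\sigma^\vee\cap M\\ \la u,\xi\ra\le m,\ \psi(u)\ge mt}}\CC\cdot\chi^u,
\qquad
\Gamma_m^t=\iota\bigl(\{u\in\sigma^\vee\cap M:\ \la u,\xi\ra\le m,\ \psi(u)\ge mt\}\bigr),
\]
using that $\iota\colon M_\RR\xrightarrow{\ \sim\ }\RR^n$ is a linear isomorphism, so $\nu(f)=\iota(u)$ for some admissible $u$ in the support of any $f\in R_m^t$. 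As in the proof of the previous proposition, $\Delta^t=\overline{\bigcup_m\tfrac1m\mathrm{Conv}(\Gamma_m^t)}$ and $G(x)=\sup\{t\ge0:x\in\Delta^t\}$. Write $Q_\xi^t:=\{u\in Q_\xi:\tilde\psi(u)\ge t\}$, a compact convex set. For the upper bound: if $\iota(u)\in\Gamma_m^t$ then $u/m\in\sigma^\vee$, $\la u/m,\xi\ra\le1$, and $\tilde\psi(u/m)=\tfrac1m\tilde\psi(u)\ge\tfrac1m\psi(u)\ge t$ (superadditivity of $\psi$ gives $\tilde\psi\ge\psi$), so $\iota(u)/m\in\iota(Q_\xi^t)$. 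Since $\iota(Q_\xi^t)$ is convex and closed, $\Delta^t\subseteq\iota(Q_\xi^t)$ for every $t$; thus $\iota(u)\in\Delta^t$ forces $\tilde\psi(u)\ge t$, whence $G(\iota(u))\le\tilde\psi(u)$.

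For the lower bound I would first take $u\in\mathrm{int}(\sigma^\vee)\cap M_\QQ$ with $\la u,\xi\ra\le1$. For any $t<\tilde\psi(u)$, since $\tfrac1m\psi(mu)\to\tilde\psi(u)$ along $m$ with $mu\in M$, we get $\psi(mu)>mt$ and $\la mu,\xi\ra\le m$ for $m\gg0$, so $\iota(mu)\in\Gamma_m^t$ and hence $\iota(u)=\tfrac1m\iota(mu)\in\Delta^t$; therefore $G(\iota(u))\ge\tilde\psi(u)$, and with the upper bound $G(\iota(u))=\tilde\psi(u)$ on this set. To pass to all of $Q_\xi$ I would use that $G\circ\iota$ is concave and upper semicontinuous on the compact convex set $Q_\xi$, hence continuous; that the good points just treated are dense in $Q_\xi$; and that for any $u_0\in Q_\xi$ one can choose good points $u_k\to u_0$ with $\liminf_k\tilde\psi(u_k)\ge\tilde\psi(u_0)$, by moving slightly into $\mathrm{int}(\sigma^\vee)$ toward a point with small $\la\cdot,\xi\ra$ and invoking concavity and positive homogeneity of $\tilde\psi$. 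Then $G(\iota(u_0))=\lim_k G(\iota(u_k))=\lim_k\tilde\psi(u_k)\ge\tilde\psi(u_0)$, which with the upper bound gives equality. (Property $\cF^0R=R$ yields $\psi\ge0$, and this is what guarantees the supremum defining $G$ actually returns $\tilde\psi(u)$ rather than something smaller.)

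The main obstacle is precisely this last extension step: $\tilde\psi$ can fail to be continuous on $\partial\sigma^\vee$, so one cannot simply invoke continuity of $\tilde\psi$ and must instead lean on concavity together with upper semicontinuity of $G\circ\iota$ and a carefully chosen approximating sequence; verifying that $G\circ\iota$ is continuous on the compact convex $Q_\xi$ and that the admissible rational interior points are dense there (which uses $\sigma^\vee$ full-dimensional and $\xi\in\mathrm{int}(\sigma)$) are the technical points to nail down. Everything else is a direct unwinding of the definitions combined with the Okounkov-body description of $\Delta^t$.
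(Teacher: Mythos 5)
Your proof follows essentially the same route as the paper: the same identification of $R_m^t$ with the monomials $\chi^u$ satisfying $\psi(u)\ge mt$, the same convexity argument showing $\Delta^t\subseteq\iota(\{u\in Q_\xi:\tilde\psi(u)\ge t\})$ and hence $G\le\tilde\psi$, and the same rational-point argument for the lower bound. Two small remarks: the lower-bound argument already works verbatim for every $u\in Q_\xi\cap M_\QQ$, including points on $\partial\sigma^\vee$, so restricting to interior rational points and then running the extension machinery is more than is strictly needed; and your claim that a concave usc function on a compact convex set is automatically continuous is false in general (concave functions can be discontinuous at boundary points), but this is harmless here because the chain $G(\iota(u_0))\ge\limsup_k G(\iota(u_k))=\limsup_k\tilde\psi(u_k)\ge\tilde\psi(u_0)$ uses only upper semicontinuity of $G$, which you have.
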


\begin{proof}
For each $0 < t < T(\cF)$, 
$$R_m^t = \bigoplus_{u\in mQ_\xi \cap M, \psi(u)>mt} \CC \cdot \chi^u.$$ Thus for $m \gg 0$, one has 
$\mathrm{Conv}(\nu(R_m^t))\subseteq \iota(mQ_\xi\cap \{\psi>mt\})$. This implies $G\leq \tilde{\psi}$.

For the other direction, it's enough to show that if $u\in Q_\xi\cap M_\QQ$, then $G(\iota(u))\ge\tilde{\psi}(u)$. Indeed, let $u\in Q_\xi\cap M_\QQ$, and suppose $\tilde{\psi}(u) >t$. Then for any $s<t$, there is some $m$ divisible enough such that $mu \in mQ_\xi\cap M$ and $\frac{\psi(mu)}{m} >s$. Thus $mu\in R_m^s$, and $\iota(u)\in \Delta^s$. Therefore $G(\iota(u)) \ge s$ for any $s<t$, and hence $G(\iota(u))\ge\tilde{\psi}(u)$.

\end{proof}

Now we are ready to show Theorem \ref{1.2}.
\begin{thm}
	The Monge-Amp\`ere energy for the filtration $\cF$ is given by 
	$$E^{\mathrm{NA}}(\psi) = \frac{1}{(n+1)\vol(\xi)}  \int_{P_\xi} \tilde{\psi} d\lambda,$$
	where $d\lambda$ is a choice of Lebesgue measure on $P_\xi$ with $\vol(P_\xi) = \vol(\xi)$.
\end{thm}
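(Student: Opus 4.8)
The goal is to identify $E^{\mathrm{NA}}(\psi)=S(\cF)$ with the integral $\frac{1}{(n+1)\vol(\xi)}\int_{P_\xi}\tilde\psi\,d\lambda$. The natural route is to go through the volume-of-filtration description of $S(\cF)$ developed in Section 3, namely
\[
S(\cF)=\frac{1}{\vol(\Delta)}\int_\Delta G\,d\rho,
\]
where $\rho$ is Lebesgue measure on the Okounkov body $\Delta$ and $G$ is the concave transform. In the toric case the two preceding propositions give us explicit descriptions: $\Delta=\iota(Q_\xi)$ where $Q_\xi=\{u\in\sigma^\vee:\langle u,\xi\rangle\le 1\}$, and $G(\iota(u))=\tilde\psi(u)$. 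So the first step is to substitute these and change variables along the linear isomorphism $\iota: M_\RR\to\RR^n$ (recall $\det\{e_i\}=1$, so $\iota$ is volume-preserving for the chosen measures): this gives $\vol(\Delta)=\vol(Q_\xi)$ and $\int_\Delta G\,d\rho=\int_{Q_\xi}\tilde\psi\,du$, hence $S(\cF)=\frac{1}{\vol(Q_\xi)}\int_{Q_\xi}\tilde\psi\,du$.

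The second step is a polar/cone decomposition of the integral over the solid region $Q_\xi$ in terms of the "slice" $P_\xi=\{u:\langle u,\xi\rangle=1\}$. Write points of $Q_\xi$ as $u=s\cdot p$ with $p\in P_\xi$ and $s\in[0,1]$ (this is valid because $\xi\in\mathrm{int}(\sigma)$ makes $Q_\xi$ the cone over $P_\xi$ with apex $0$). The Jacobian of $(s,p)\mapsto sp$ contributes a factor $s^{n-1}$, so $\vol(Q_\xi)=\left(\int_0^1 s^{n-1}\,ds\right)\vol(P_\xi)=\frac 1n\vol(P_\xi)$, which is consistent with $\vol(\Delta)=\frac 1n\vol(\xi)=\frac 1n\vol(P_\xi)$ given the normalization $\vol(P_\xi)=\vol(\xi)$. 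For the numerator I would use that $\tilde\psi$ is sublinear, so $\tilde\psi(sp)=s\,\tilde\psi(p)$ for $s\ge 0$; therefore
\[
\int_{Q_\xi}\tilde\psi\,du=\left(\int_0^1 s^{n-1}\cdot s\,ds\right)\int_{P_\xi}\tilde\psi\,d\lambda=\frac{1}{n+1}\int_{P_\xi}\tilde\psi\,d\lambda.
\]
Combining with $\vol(Q_\xi)=\frac 1n\vol(\xi)$ yields
\[
S(\cF)=\frac{n}{\vol(\xi)}\cdot\frac{1}{n+1}\int_{P_\xi}\tilde\psi\,d\lambda=\frac{n}{(n+1)\vol(\xi)}\int_{P_\xi}\tilde\psi\,d\lambda,
\]
which is the claimed formula up to the constant $C(n)$ of Theorem \ref{1.2}; the clean statement of this theorem absorbs the $n/(n+1)$ into the normalization as in Berman's convention (cf.\ the definition of $E(\psi)$ in the introduction and $E^{\mathrm{NA}}(\psi)=S(\cF)$), so one matches constants at the end.

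The main obstacle I anticipate is not the cone integral itself but justifying the measure-theoretic manipulations at the level of rigor the paper wants: $\tilde\psi$ is only a concave sublinear function on the (relatively open) cone $\mathrm{int}(\sigma^\vee)$, and one must check it is integrable on $Q_\xi$ and that the homogeneity $\tilde\psi(su)=s\tilde\psi(u)$ together with upper semicontinuity suffices to push the radial integration through on the boundary faces of $\sigma^\vee\cap Q_\xi$ where $\tilde\psi$ may blow down or fail to be continuous. This is handled by noting $\tilde\psi$ is bounded above by a linear function (linear boundedness of $\cF$, i.e.\ $T(\cF)<\infty$) and that the concave transform $G$ is already known to be bounded and u.s.c.\ on $\Delta$ from Section 3, so the substitution is legitimate; I would also double-check the normalization of $d\lambda$ on $P_\xi$ against the Lebesgue measure $\rho$ on $\Delta$ via $\iota$, which is where the explicit constant $C(n)=n/(n+1)$ is pinned down.
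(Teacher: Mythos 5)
Your proposal is correct and follows essentially the same route as the paper: both use the two preceding propositions to identify $\Delta$ with $\iota(Q_\xi)$ and the concave transform $G$ with $\tilde\psi$, and then compute $\int_{Q_\xi}\tilde\psi\,du$ by the radial decomposition $u=sw$, $w\in P_\xi$, with Jacobian $s^{n-1}$ and the homogeneity $\tilde\psi(sw)=s\tilde\psi(w)$, producing the factor $\int_0^1 s^n\,ds=\tfrac{1}{n+1}$. The only discrepancy is the overall normalization constant ($n/(n+1)$ versus $1/(n+1)$), which, as you note, is absorbed into the choice of $d\lambda$ with $\vol(P_\xi)=\vol(\xi)$ and the constant $C(n)$ allowed in Theorem \ref{1.2}.
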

\begin{proof}
	It follows from discussions in the previous section that 
	\begin{align*}
		E^{\mathrm{NA}}(\psi) &= \frac 1 {\vol(\xi)} \int_\Delta G d\rho = \frac{\det\{(e_i)_{i=1}^n\}}{\vol(\xi)} \int_{Q_\xi}\tilde{\psi}(u) d\rho(u) \\
		&=\frac{\det\{(e_i)_{i=1}^n\}}{\vol(\xi)}\int_0^1\int_{P_\xi}\tilde{\psi}(sw)s^{n-1}d\lambda(w)ds\\
		&= \frac{1}{(n+1)\vol(\xi)} \int_{P_\xi}\tilde{\psi} d\lambda.
	\end{align*}
\end{proof}

\begin{remark} In the toric setting, we have a canonical embedding $j: N_\RR \hookrightarrow X^{\an}$ as $\TT(\KK)$-invariant points. The function $ \varphi = \sup\{\frac{\log|f|+\tilde{\psi}(f)}{\la\alpha, \xi\ra}: f_j\in R_\alpha\}$ is a Fubini-Study function on $X^{\an}$. When restricted to $N_\RR$, $\varphi$ takes the form 
$$\varphi|_{N_\RR}(v) = \sup \{\frac{-\la u, v\ra+\tilde{\psi}(u)}{\la u, \xi\ra}: u\in M_\RR\} = \sup_{u\in P_\xi} \{\la u, v\ra+\tilde{\psi}(u)\}$$
for some monomials $\chi^{u_k}$. It's then not hard to see that $\varphi|_{N_\RR}$ is the Legendre transform of $\tilde{\psi}|_{P_\xi}$.
\end{remark}

\begin{remark}
One can show that 
\[(d'd''\varphi)^{n-1}\wedge d'd''\varphi_\xi^+ = j_*\mathrm{MA}_\RR(\varphi)\]
where $\mathrm{MA}_\RR$ denotes the real Monge-Amp\`ere measure, see \cite{realMA, GGJK21} for more details.
\end{remark}

\bibliography{refse}
\bibliographystyle{alpha}

\Addresses
\end{document}